\newcommand{\N}{\mathbb{N}}
\newcommand{\Z}{\mathbb{Z}}
\newcommand{\Q}{\mathbb{Q}}
\newcommand{\R}{\mathbb{R}}
\newcommand{\C}{\mathbb{C}}
\newcommand{\dps}{\displaystyle}
\newcommand{\pp}{\mathbb{P}}
\newcommand{\oo}{\mathcal{O}}
\newcommand{\gs}{\mathfrak{S}}
\newcommand{\Sc}{\mathbb{S}}
\newcommand{\Lb}{\mathcal{L}}
\newcommand{\Mb}{\mathcal{M}}
\newcommand{\Nb}{\mathcal{N}}
\newcommand{\fl}{\mathcal{F}\!\ell}
\DeclareMathOperator{\im}{Im}
\DeclareMathOperator{\rk}{rk}
\DeclareMathOperator{\spa}{Span}
\DeclareMathOperator{\gl}{GL}
\DeclareMathOperator{\h}{H}
\DeclareMathOperator{\conv}{conv}
\DeclareMathOperator{\pic}{Pic}
\DeclareMathOperator{\cl}{cl}
\DeclareMathOperator{\wt}{Wt}
\DeclareMathOperator{\proj}{Proj}
\DeclareMathOperator{\codim}{codim}
\DeclareMathOperator{\sym}{Sym}
\newtheorem{theo}{Theorem}[section]
\newtheorem{prop}[theo]{Proposition}
\newtheorem{lemma}[theo]{Lemma}
\theoremstyle{definition}
\newtheorem{de}[theo]{Definition}
\newtheorem{rmk}[theo]{Remark}
\newenvironment{changemargin}[2]{%
\begin{list}{}{%
\setlength{\topsep}{0pt}%
\setlength{\leftmargin}{#1}%
\setlength{\rightmargin}{#2}%
\setlength{\listparindent}{\parindent}%
\setlength{\itemindent}{\parindent}%
\setlength{\parsep}{\parskip}%
}%
\item[]}{\end{list}}
\title{A Geometric Approach to the stabilisation of certain sequences of Kronecker coefficients}
\author{Maxime Pelletier\thanks{Univ Lyon, Université Claude Bernard Lyon 1, CNRS UMR 5208, Institut Camille Jordan, 43 blvd. du 11 novembre 1918, F-69622 Villeurbanne cedex, France (\texttt{pelletier@math.univ-lyon1.fr})}}
\date{}
\begin{document}
\maketitle

\begin{abstract}
We give another proof, using tools from Geometric Invariant Theory, of a result due to S. Sam and A. Snowden in 2014, concerning the stability of Kronecker coefficients. This result states that some sequences of Kronecker coefficients eventually stabilise, and our method gives a nice geometric bound from which the stabilisation occurs. We perform the explicit computation of such a bound on two examples, one being the classical case of Murnaghan's stability. Moreover, we see that our techniques apply to other coefficients arising in Representation Theory: namely to some plethysm coefficients and in the case of the tensor product of representations of the hyperoctahedral group.
\end{abstract}

\section{Introduction}

For a positive integer $n$, let $\gs_n$ be the symmetric group over $n$ elements. The complex irreducible representations of this group are indexed by the partitions of $n$ (i.e. non-increasing finite sequences of positive integers -called parts- whose sum is equal to $n$). For a partition $\alpha$ of $n$ (for which the integer $n$ is called the size, and denoted $|\alpha|$), we denote its length (i.e. the number of parts) by $\ell(\alpha)$, and write $M_\alpha$ for the associated complex irreducible representation of $\gs_n$. An important problem concerning the representation theory of this group is the understanding of the decomposition of the tensor product of two such irreducible representations:
\[ M_\alpha\otimes M_\beta=\bigoplus_{\gamma\vdash n}M_\gamma^{\oplus g_{\alpha,\beta,\gamma}}, \]
where the multiplicities $g_{\alpha,\beta,\gamma}$ are non-negative integers, which are called the Kronecker coefficients. These coefficients appear in various situations, and are quite difficult to study. Some of their properties are nevertheless known, one of which being that the order of the three partitions indexing a Kronecker coefficient does not matter.

\vspace{5mm}

There are several different ways of studying the Kronecker coefficients, and we will be interested in their asymptotic behaviour, in various senses. They hold indeed a remarkable asymptotic property, noticed by F. Murnaghan in 1938: let $\alpha,\beta,\gamma$ be partitions of the same integer; if one repetitively increases by 1 the first part of each of these partitions, the corresponding sequence of Kronecker coefficients ends up stabilising. J. Stembridge, in \cite{stembridge}, introduced two notions of stability of a triple of partitions in order to generalise this Murnaghan's stability:

\begin{de}
A triple $(\alpha,\beta,\gamma)$ of partitions such that $|\alpha|=|\beta|=|\gamma|$ is called:
\begin{itemize}
\item weakly stable if $g_{d\alpha,d\beta,d\gamma}=1$ for all $d\in\N^*$;
\item stable if $g_{\alpha,\beta,\gamma}>0$ and, for any triple $(\lambda,\mu,\nu)$ of partitions such that $|\lambda|=|\mu|=|\nu|$, the sequence of general term $g_{\lambda+d\alpha,\mu+d\beta,\nu+d\gamma}$ is eventually constant.
\end{itemize}
\end{de}

The terminology ``weakly stable'' is in fact used by L. Manivel in \cite{manivel}. The notion of a stable triple is made to generalise the Murnaghan's stability: the latter simply means that the triple $\big((1),(1),(1)\big)$ is stable. By introducing the notion of a weakly stable triple, Stembridge hoped to find a more simple criterion to determine whether a triple is stable. He proved in \cite{stembridge} that a stable triple is weakly stable, and conjectured that the converse is true. S. Sam and A. Snowden proved shortly after, in \cite{sam-snowden}, that it is indeed verified.  We also learned during the redaction of this article about a prepublication by P.-E. Paradan \cite{paradan}, who demonstrated this kind of result in a more general context which in particular contains the case of Kronecker coefficients (as well as the plethysm case). In the first part of this article, we give another new proof of this result:

\begin{theo}\label{big_theo}
If a triple $(\alpha,\beta,\gamma)$ of partitions is weakly stable, then it is stable.
\end{theo}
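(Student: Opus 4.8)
The plan is to translate the combinatorial statement about Kronecker coefficients into a question about the asymptotic geometry of a sequence of projective varieties, since the paper announces a Geometric Invariant Theory (GIT) approach. The starting point is the classical interpretation of Kronecker coefficients as dimensions of spaces of invariants (equivalently, multiplicities): if $V_\alpha$, $V_\beta$, $V_\gamma$ denote the irreducible $\mathrm{GL}$-modules associated to the three partitions, then $g_{\alpha,\beta,\gamma}$ is the multiplicity of the trivial representation of $\mathfrak{S}_N$ in $M_\alpha \otimes M_\beta \otimes M_\gamma$, and by Schur--Weyl duality this can be read off as the dimension of an invariant space for the product group $G = \mathrm{GL}(V) \times \mathrm{GL}(W) \times \mathrm{GL}(U)$ acting on a suitable space of tensors. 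The key object will be the multiplicity of a highest-weight line in a graded ring of sections of a line bundle $\Lb_{\alpha,\beta,\gamma}$ on a product $\pp(V)\times\pp(W)\times\pp(U)$ of projective spaces (or on a flag-variety analogue), so that the number $g_{\lambda+d\alpha,\mu+d\beta,\nu+d\gamma}$ becomes the dimension of a weight space of sections of $\Lb_{\lambda,\mu,\nu}\otimes\Lb_{\alpha,\beta,\gamma}^{\otimes d}$.

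First I would make the weak-stability hypothesis geometric: the condition $g_{d\alpha,d\beta,d\gamma}=1$ for all $d$ says that the section ring $\bigoplus_{d\geq 0} \h^0(\Lb_{\alpha,\beta,\gamma}^{\otimes d})^{G}$ is one-dimensional in every degree, i.e. the GIT quotient determined by the weight $(\alpha,\beta,\gamma)$ is a single reduced point. This is exactly the situation in which one has the cleanest control: the semistable locus consists (generically, after removing a smaller stratum) of a single closed orbit, and one can hope to produce an explicit point of the variety whose stabiliser and whose image under the moment map pin down the quotient. I would then set up the twisted family $\Lb_{\lambda,\mu,\nu}\otimes\Lb_{\alpha,\beta,\gamma}^{\otimes d}$ and interpret $g_{\lambda+d\alpha,\mu+d\beta,\nu+d\gamma}$ as $\dim \h^0$ of the corresponding equivariant line bundle twisted by $d$ copies of the weak-stable one. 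The heart of the argument is to show that pushing forward along, or restricting to, the $G$-quotient point attached to $(\alpha,\beta,\gamma)$ turns this dimension into the dimension of the fibre of a coherent sheaf at a point, and that such fibre dimensions stabilise.

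The central geometric tool I would invoke is a vanishing/stabilisation result for the cohomology of these twisted bundles: once the base GIT quotient for the weak-stable class is a point, twisting by large $d$ makes the relevant higher cohomology vanish and identifies $\h^0$ with the invariants of a fixed finite-dimensional space attached to the normal geometry at that point (an Euler-characteristic computation via Riemann--Roch, combined with a Kodaira-type vanishing on the quotient, should collapse $\chi$ to $\h^0$ and show the answer is eventually polynomial \emph{and then constant} in $d$, because the quotient has dimension zero). Concretely, I would argue that the multiplicity $g_{\lambda+d\alpha,\mu+d\beta,\nu+d\gamma}$ equals, for $d$ large, the dimension of the space of invariant sections of a fixed bundle over the zero-dimensional quotient, hence is independent of $d$; the explicit bound on where stabilisation begins will come from the degree at which the relevant cohomology vanishes, controlled by intersection numbers on the projective factors. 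The main obstacle I anticipate is precisely the passage from weak stability (a statement only about the diagonal dilations $d(\alpha,\beta,\gamma)$ giving multiplicity one) to a statement strong enough to force the GIT quotient to be a reduced point rather than merely a point with a fat scheme structure or with extra components in nearby degrees; handling this rigorously will likely require a careful analysis of the semistable locus and its unique closed orbit — identifying the correct point, computing its stabiliser, and verifying that the local structure of the quotient near it is reduced of dimension zero — and it is here that one must use that weak stability holds for \emph{all} $d$, not just one, to rule out nilpotents and guarantee that the numerical quotient is genuinely a single smooth point.
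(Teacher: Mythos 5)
Your starting point coincides with the paper's: encode $g_{\lambda+d\alpha,\mu+d\beta,\nu+d\gamma}$ as $\dim\h^0(X,\Mb+d\Lb)^G$ for suitable line bundles on a product of flag varieties, and observe that weak stability forces the invariant section ring $\bigoplus_{d}\h^0(X,\Lb^{\otimes d})^G$ to be one-dimensional in each degree, so that $X^{ss}(\Lb)\sslash G$ is a point. From there, however, your plan has two genuine gaps. First, you never justify passing from $X$ to the $\Lb$-quotient: to interpret $\h^0(X,\Mb+d\Lb)^G$ as sections over $X^{ss}(\Lb)$ (hence as the fibre of a coherent sheaf on that point) one needs the inclusion $X^{ss}(\Mb+d\Lb)\subset X^{ss}(\Lb)$ for $d\gg 0$. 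This is a real statement requiring proof — the paper establishes it (Proposition \ref{prop_semi-stable}) via the Hilbert--Mumford criterion, a finiteness argument on weight polytopes and Hahn--Banach, and it can also be extracted from GIT-fan theory — and nothing in your sketch produces it. Second, and more seriously, your proposed mechanism for constancy in $d$ (Riemann--Roch plus a Kodaira-type vanishing, "polynomial and then constant because the quotient has dimension zero") is a non sequitur: on a zero-dimensional quotient Riemann--Roch is vacuous, and the $d$-dependence sits entirely in the pushforward sheaf itself, which changes with $d$. The paper kills this dependence by a completely different device: Luna's slice theorem gives $X^{ss}(\Lb)\simeq G\times_H S$ with $H=G_{x_0}$ the stabiliser of the unique closed orbit and $S$ an $H$-module, so $\h^0(X,\Mb+d\Lb)^G\simeq\h^0(S,\Mb+d\Lb)^H$; then one proves that $\Lb|_S$ is \emph{trivial as an $H$-linearised bundle}, because weak stability supplies an invariant section $\sigma_0$ of $\Lb$ with $\sigma_0(x_0)\neq 0$, forcing the character $\chi_0$ of $H$ on $\Lb_{x_0}$ to be trivial. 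This step cannot be skipped: were $\chi_0$ nontrivial, $\h^0(S,\Mb+d\Lb)^H$ would be the $\chi_0^{-d}$-isotypic part of $\h^0(S,\Mb)$ and could genuinely vary with $d$.

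Note also that the obstacle you single out as the main one — that the quotient might be a "fat" or non-reduced point — is not an issue at all: the invariant ring is a subring of the section ring of an irreducible projective variety, hence a domain, and once every graded piece is one-dimensional the quotient is $\proj(\C[t])$, a single reduced point; its scheme structure never enters the argument. Meanwhile the two load-bearing steps above go unaddressed. For what it is worth, an Euler-characteristic/quasipolynomiality argument of the kind you invoke does appear in the paper (Section \ref{section_quasipolynomial}), but only to sharpen the stabilisation bound by one unit, and even there it works only because the Luna argument independently pins down the value of the sequence at infinitely many $d$; quasipolynomiality plus monotonicity alone would not yield stabilisation, since boundedness of the sequence is precisely what has to be proved.
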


A question then arises: given a stable triple, can we determine when the associated sequences of Kronecker coefficients do stabilise? There has already been results on this, at least in the case of Murnaghan's stability: for instance, M. Brion -in 1993- and E. Vallejo -in 1999- calculated bounds from which these sequences are necessarily constant. In \cite{briand-orellana-rosas}, E. Briand, R. Orellana, and M. Rosas recall the two bounds from Brion and Vallejo, and determine two other ones, still in the case of the stable triple $\big((1),(1),(1)\big)$.

\vspace{5mm}

The interesting aspect of our proof of Theorem \ref{big_theo} is that it gives a nice ``geometric bound'' from which we can be certain that the sequence $(g_{\lambda+d\alpha,\mu+d\beta,\nu+d\gamma})_d$ is constant, if the triple $(\alpha,\beta,\gamma)$ is stable. Indeed, the Kronecker coefficients can classically be related to the dimension of spaces of invariant sections from some line bundles: for all triples $(\alpha,\beta,\gamma)$ and $(\lambda,\mu,\nu)$, there exist a reductive group $G$ acting on a projective variety $X$, and two $G$-linearised line bundles $\Lb$ and $\Mb$ over $X$ whose spaces of invariant sections respectively give -via their dimension- the coefficients $g_{\alpha,\beta,\gamma}$ and $g_{\lambda,\mu,\nu}$ (cf. Section \ref{link_with_line_bundles}). Then, for $d\in\N$, $g_{\lambda+d\alpha,\mu+d\beta,\nu+d\gamma}$ is the dimension of $\h^0(X,\Mb+d\Lb)^G$, the space of invariant sections of the line bundle $\Mb+d\Lb$ on $X$.

\begin{prop}\label{prop_intro}
We suppose that the triple $(\alpha,\beta,\gamma)$ is weakly stable. Then:
\begin{itemize}
\item there exists an integer $D\in\N$ such that, for all $d\geq D$, $X^{ss}(\Mb+d\Lb)\subset X^{ss}(\Lb)$ where, if $\Nb$ is a line bundle, $X^{ss}(\Nb)$ stands for the set of semi-stable points with respect to $\Nb$, i.e. the points $x$ for which there exists an invariant section of a positive tensor power of $\Nb$ whose value at $x$ is not zero.
\item as soon as $X^{ss}(\Mb+d\Lb)\subset X^{ss}(\Lb)$, the Kronecker coefficient $g_{\lambda+d\alpha,\mu+d\beta,\nu+d\gamma}$ does not depend on $d$.
\end{itemize}
\end{prop}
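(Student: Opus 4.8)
The plan is to reduce both points to the Hilbert--Mumford numerical criterion together with the geometric meaning of weak stability. Recall that for a $G$-linearised line bundle $\Nb$, a $1$-parameter subgroup $\lambda$ of $G$ and a point $x\in X$, one has the numerical function $\mu^{\Nb}(x,\lambda)\in\Z$, with the convention that $x\in X^{ss}(\Nb)$ if and only if $\mu^{\Nb}(x,\lambda)\geq 0$ for every $\lambda$. The two facts I would use throughout are that $\mu$ is additive in the bundle, so that $\mu^{\Mb+d\Lb}(x,\lambda)=\mu^{\Mb}(x,\lambda)+d\,\mu^{\Lb}(x,\lambda)$, and that, over all $x$ and all \emph{primitive} $1$-parameter subgroups, both $\mu^{\Lb}$ and $\mu^{\Mb}$ take only finitely many (integer) values, these being pairings of primitive cocharacters with the finite set of weights of a maximal torus $T$ on the linearisation.

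For the first point, let $C$ be the maximum of $|\mu^{\Mb}|$ over that finite set of values and set $D:=C+1$. Arguing by contradiction, suppose $d\geq D$ and $x\in X^{ss}(\Mb+d\Lb)$ but $x\notin X^{ss}(\Lb)$. Then some primitive $\lambda$ satisfies $\mu^{\Lb}(x,\lambda)<0$, hence $\mu^{\Lb}(x,\lambda)\leq -1$ by integrality, so that $\mu^{\Mb+d\Lb}(x,\lambda)=\mu^{\Mb}(x,\lambda)+d\,\mu^{\Lb}(x,\lambda)\leq C-d<0$, contradicting the semi-stability of $x$ for $\Mb+d\Lb$. (If no pair $(x,\lambda)$ gives $\mu^{\Lb}<0$ then $X^{ss}(\Lb)=X$ and the inclusion is trivial.) This simultaneously proves the inclusion for all $d\geq D$ and exhibits the announced explicit bound.

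For the second point, I would first unwind weak stability. Since $g_{d\alpha,d\beta,d\gamma}=\dim\h^0(X,d\Lb)^G=1$ for all $d\geq 1$, the section ring $\bigoplus_{d\geq 0}\h^0(X,d\Lb)^G$ is a polynomial ring in one variable, so $X/\!\!/_{\Lb}G=\proj\big(\bigoplus_d\h^0(X,d\Lb)^G\big)$ is a single reduced point. Concretely there is an invariant section $s\in\h^0(X,\Lb)^G$, unique up to scalar, with $\h^0(X,d\Lb)^G=\C\,s^d$, and $X^{ss}(\Lb)$ is exactly its non-vanishing locus $X_s$. On $X_s$ the section $s$ trivialises $\Lb$ equivariantly, giving a $G$-equivariant isomorphism $(\Mb+d\Lb)|_{X_s}\cong\Mb|_{X_s}$ by multiplication by $s^d$; passing to invariant global sections yields $\h^0(X_s,\Mb+d\Lb)^G\cong\h^0(X_s,\Mb)^G=:V$, a space independent of $d$. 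It therefore suffices to prove that, as soon as $X^{ss}(\Mb+d\Lb)\subseteq X^{ss}(\Lb)$, the restriction map $\h^0(X,\Mb+d\Lb)^G\to\h^0(X_s,\Mb+d\Lb)^G$ is an isomorphism, for this forces $g_{\lambda+d\alpha,\mu+d\beta,\nu+d\gamma}=\dim\h^0(X,\Mb+d\Lb)^G=\dim V$ for every such $d$, which is the claim. Injectivity is immediate, $X_s$ being a dense open subset of the irreducible variety $X$ (cf. the construction of Section \ref{link_with_line_bundles}).

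The real content, and the step I expect to be the main obstacle, is surjectivity: the extension of invariant sections across the boundary $X\setminus X_s=\{s=0\}$. As this boundary is a \emph{divisor}, no Hartogs-type codimension argument applies, and one must genuinely use the hypothesis. Any $\tau\in\h^0(X_s,\Mb+d\Lb)^G$ can be written $\tau=\rho/s^n$ with $\rho\in\h^0(X,\Mb+(d+n)\Lb)^G$, and extending $\tau$ to $X$ amounts to showing that $s^n$ divides $\rho$, i.e.\ that $\rho$ does not vanish to too small an order along any component of $\{s=0\}$. I would establish this by contradiction through the numerical criterion: a genuine pole of $\tau$ along a prime divisor $Z\subseteq\{s=0\}$ can be converted, using the local structure of the $G$-action near the generic point of $Z$ and an adapted one-parameter subgroup, into an invariant section of a power of $\Mb+d\Lb$ not vanishing at some point of $Z$; that point would then lie in $X^{ss}(\Mb+d\Lb)$ but outside $X^{ss}(\Lb)=X_s$, contradicting the inclusion. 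Controlling this order of vanishing along the boundary is where the analysis is most delicate.
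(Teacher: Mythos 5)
Your plan runs parallel to the paper's proof (Hilbert--Mumford for the first point; restriction to $X^{ss}(\Lb)$ plus an equivariant trivialisation of $\Lb$ there for the second), and one ingredient is genuinely good: using the invariant section $s$ itself to trivialise $\Lb$ equivariantly on $X_s=X^{ss}(\Lb)$ is correct, and for this particular statement it even bypasses the paper's detour through Luna's slice theorem. But each of your two points has a genuine gap. In the first point, your finiteness claim is false: you assert that $\mu^{\Lb}$ and $\mu^{\Mb}$ take only finitely many values over all $x$ and all \emph{primitive} one-parameter subgroups, ``these being pairings of primitive cocharacters with the finite set of weights''. The set of weights is indeed finite, but the set of primitive cocharacters is not (the maximal torus here has rank $n_1+n_2\geq 4$), and the pairing of a fixed nonzero weight with primitive cocharacters is unbounded: for $T=(\C^*)^2$ and $\chi=(1,0)$, the cocharacters $\lambda=(n,1)$ are primitive for every $n$ and $\langle\chi,\lambda\rangle=n$. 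So $\sup|\mu^{\Mb}(x,\lambda)|$ over primitive $\lambda$ is infinite even for a single generic $x$, your constant $C$ does not exist, and $D=C+1$ is undefined. The repair is exactly the finiteness used in the paper's proof of Proposition~\ref{prop_semi-stable}: what is finite is the set of possible ``weight configurations'' of points (there, the sets $E_{x,g}$ of components of $X^T$ meeting $\overline{T.(g.x)}$, a subset of $\mathcal{P}(\llbracket 1,s\rrbracket)$); one fixes \emph{one} destabilising datum per configuration (in the paper, a separating linear form given by Hahn--Banach) and takes a maximum over those finitely many choices only, not over all primitive one-parameter subgroups.

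In the second point, the surjectivity of the restriction $\h^0(X,\Mb+d\Lb)^G\to\h^0(X_s,\Mb+d\Lb)^G$ --- which you rightly call the real content --- is left as a sketch, and the sketch does not deliver. This surjectivity is precisely the Guillemin--Sternberg/Teleman theorem that the paper invokes as Proposition~\ref{prop_teleman} (restriction to the semi-stable locus is an isomorphism on invariant sections of a semi-ample linearised bundle), combined with the inclusion $X^{ss}(\Mb+d\Lb)\subset X_s$ and injectivity of restrictions to dense opens; it is a quotable theorem, not something recovered in a few lines. Concretely, your contradiction argument has a structural mismatch: writing $\tau=\rho/s^n$ with $\rho\in\h^0(X,\Mb+(d+n)\Lb)^G$, a genuine pole of $\tau$ along $Z\subset\{s=0\}$ can at best produce a point of $Z$ where $\rho\neq 0$, i.e.\ a point of $X^{ss}(\Mb+(d+n)\Lb)$ lying outside $X_s$ --- a statement about the \emph{shifted} bundle $\Mb+(d+n)\Lb$, not about a power of $\Mb+d\Lb$. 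This contradicts your hypothesis only if you also assume $X^{ss}(\Mb+(d+n)\Lb)\subset X_s$, i.e.\ the inclusion for all $d'\geq d$ rather than for the single $d$ of the statement; and even then one must rule out the case where the minimal $n$ yields a $\rho$ vanishing on $Z$ to order smaller than that of $s$ (possible if the divisor of $s$ is non-reduced), in which case no such point exists. So as written the second point is unproved: you should either cite the extension theorem, as the paper does in Proposition~\ref{prop_teleman} and then in Proposition~\ref{inclusion_of_ss_points_suffices}, or supply an actual proof of it.
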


What we prove precisely is in fact that $\h^0(X^{ss}(\Lb),\Mb+d\Lb)^G$ does not depend on $d$ and that, if $X^{ss}(\Mb+d\Lb)\subset X^{ss}(\Lb)$, then the restriction morphism $\h^0(X,\Mb+d\Lb)^G\hookrightarrow\h^0(X^{ss}(\Lb),\Mb+d\Lb)^G$ is an isomorphism. A natural question could thus be: is the converse true? We do not answer this question but, while the inclusion condition of semi-stable points applies to all $d$'s such that $\Mb+d\Lb$ belongs to a GIT-class containing the GIT-class of $\Lb$ in its closure, we manage in Section \ref{improvement} to extend the final result (i.e. the induced equality of dimensions between $\h^0(X,\Mb+d\Lb)^G$ and $\h^0(X^{ss}(\Lb),\Mb+d\Lb)^G$) to the $\Mb+d\Lb$'s in the boundary of these previous GIT-classes. The key points to obtain this extension are an argument of quasipolynomiality (which is a known result, of which we nevertheless write a proof in Section \ref{section_quasipolynomial}, inspired by \cite{kumar-prasad}) and the structure of the GIT-fan. The previous question could then raise another one: on these boundaries, is the restriction morphism $\h^0(X,\Mb+d\Lb)^G\hookrightarrow\h^0(X^{ss}(\Lb),\Mb+d\Lb)^G$ still an isomorphism?

\vspace{5mm}

In Section \ref{section_explicit_bounds}, we give a method allowing -at least for ``small'' weakly stable triples- to compute bounds from which the inclusion $X^{ss}(\Mb+d\Lb)\subset X^{ss}(\Lb)$ is realised. We perform the calculations for two examples of triples (namely $\big((1),(1),(1)\big)$ and $\big((1,1),(1,1),(2)\big)$). Taking into account the slight extension explained in the previous paragraph, it gives us:

\begin{theo}\label{thm_bound1}
If we denote $n_1=\ell(\lambda)$, $n_2=\ell(\mu)$, and set \footnote{The notation $\lceil x\rceil$ stands for the ceiling of the number $x$ (i.e. the integer such that $\lceil x\rceil -1<x\leq\lceil x\rceil$).}
\[ D_1=\left\lceil\frac{1}{2}\left(-\lambda_1+\lambda_2-\mu_1+\mu_2+2(\nu_2-\nu_{n_1n_2})+ \sum_{k=1}^{n_1+n_2-4}(\nu_{k+2}-\nu_{n_1n_2-k})\right)\right\rceil, \]
we have, for all $d\geq D_1$, $g_{\lambda+d(1),\mu+d(1),\nu+d(1)}=g_{\lambda+D_1(1),\mu+D_1(1),\nu+D_1(1)}$.
\end{theo}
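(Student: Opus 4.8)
The plan is to apply Proposition \ref{prop_intro} and reduce the whole statement to an explicit computation of the threshold $D$ appearing there. By the second bullet of that proposition, once the inclusion $X^{ss}(\Mb+d\Lb)\subset X^{ss}(\Lb)$ holds the Kronecker coefficient $g_{\lambda+d(1),\mu+d(1),\nu+d(1)}$ stabilises; and by the first bullet such an inclusion does hold for all large $d$ precisely because the triple $\big((1),(1),(1)\big)$ is weakly stable (this is Murnaghan's classical stability, hence the triple is even stable). So the content of Theorem \ref{thm_bound1} is not the \emph{existence} of a bound but the identification of the explicit value $D_1$ in terms of the parts of $\lambda,\mu,\nu$. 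I would therefore set up, for the stable triple $\big((1),(1),(1)\big)$, the concrete geometric model $(G,X,\Lb,\Mb)$ promised in Section \ref{link_with_line_bundles}, and then translate the semistability condition into an inequality on the $d$-shifted weights.

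The key computational tool will be the Hilbert--Mumford numerical criterion: a point $x\in X$ is semistable with respect to a line bundle $\Nb$ if and only if $\mu^{\Nb}(x,\tau)\le 0$ for every one-parameter subgroup $\tau$ of $G$. The inclusion $X^{ss}(\Mb+d\Lb)\subset X^{ss}(\Lb)$ fails exactly at points that are $(\Mb+d\Lb)$-semistable but $\Lb$-unstable, i.e. points $x$ admitting a one-parameter subgroup $\tau$ with $\mu^{\Lb}(x,\tau)>0$ while $\mu^{\Mb+d\Lb}(x,\tau)=\mu^{\Mb}(x,\tau)+d\,\mu^{\Lb}(x,\tau)\le 0$. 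The first steps are: (i) write $X$ as the relevant product of flag varieties / projectivisation of the tensor product $\C^{n_1}\otimes\C^{n_2}$, with $G$ the product of the corresponding linear groups acting diagonally; (ii) read off the weights $\mu^{\Lb}$ and $\mu^{\Mb}$ in terms of $\lambda,\mu,\nu$ along each one-parameter subgroup; and (iii) isolate the finitely many \emph{destabilising} one-parameter subgroups — those with $\mu^{\Lb}(x,\tau)>0$ — since the inclusion holds as soon as $d$ is large enough that every such $\tau$ also satisfies $\mu^{\Mb+d\Lb}(x,\tau)>0$.

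From this, the bound emerges by optimising: for each destabilising $\tau$ one gets a constraint $d>-\mu^{\Mb}(x,\tau)/\mu^{\Lb}(x,\tau)$, and $D_1$ should be the ceiling of the maximum of these ratios over the finite set of candidate $(x,\tau)$. The expression in the statement, with its telescoping sum $\sum_{k=1}^{n_1+n_2-4}(\nu_{k+2}-\nu_{n_1n_2-k})$ and the boundary terms $\nu_2-\nu_{n_1n_2}$, strongly suggests that the worst destabilising one-parameter subgroup corresponds to a specific pairing of the largest available weights of $\lambda$ and $\mu$ against the spread of the parts of $\nu$ between its extreme indices $2$ and $n_1n_2$. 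Concretely, I would parametrise the candidate $\tau$ by the combinatorial data of how the $n_1 n_2$ weight-directions in $\C^{n_1}\otimes\C^{n_2}$ are split, show that $\mu^{\Lb}$ and $\mu^{\Mb}$ are piecewise-linear in this data, and verify that the maximiser is attained at the configuration producing exactly the displayed linear combination of parts.

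The main obstacle I anticipate is step (iii): precisely enumerating and comparing the destabilising one-parameter subgroups, because the numerical function $\mu$ depends both on $\tau$ and on the point $x$, so one must argue that it suffices to test a finite explicit list (the ``maximally destabilising'' directions coming from the $T$-fixed loci, where $T\subset G$ is a maximal torus) and then carry out the optimisation over that list. Establishing that the maximum of the ratios is exactly $\tfrac12$ times the bracketed expression — rather than some other combination of the parts — is the delicate combinatorial heart of the argument; the half and the ceiling are forced by the fact that the relevant weights come in symmetric $\pm$ pairs, so that $\mu^{\Lb}(x,\tau)$ takes the value $2$ on the optimal $\tau$, and the ceiling reflects the integrality of $d$. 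Once the optimising configuration is pinned down, substituting the weights of $\lambda$, $\mu$, and $\nu$ and simplifying should yield $D_1$ directly.
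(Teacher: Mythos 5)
Your overall strategy --- reduce to the inclusion $X^{ss}(\Mb+d\Lb)\subset X^{ss}(\Lb)$ via Proposition \ref{prop_intro}, certify it with the Hilbert--Mumford criterion along destabilising one-parameter subgroups, and extract $D_1$ by optimising the ratios $-\mu^{\Mb}(x,\tau)/\mu^{\Lb}(x,\tau)$ --- is exactly the paper's plan (Section \ref{steps_computation}). But two things are missing, one of which is fatal to the statement as written. First, the step you yourself flag as the ``main obstacle'' (taming the set of pairs $(x,\tau)$ over which to optimise) is where all the work lies, and your proposed fix (testing ``maximally destabilising directions coming from the $T$-fixed loci'') is left entirely unexecuted and is not how the paper makes the list finite. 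The paper's resolution is structural: push down along $\pi:X\to\overline{X}=\pp(V_1)\times\pp(V_2)\times\pp((V_1\otimes V_2)^*)$, where $\Lb$ descends to an ample $\overline{\Lb}$ with an explicit one-dimensional space of invariant sections; then prove (Proposition \ref{orbits_murnaghan}, by an elementary orbit analysis in terms of $\rk\psi$ and the incidence of the two lines) that $\overline{X}^{us}(\overline{\Lb})$ is the closure of a \emph{single} $G$-orbit, that of $\overline{x}=(\C e_1,\C f_2,\C\varphi_n)$. By $G$-equivariance of $\pi$ and closedness of $X^{us}(\Mb+d\Lb)$ it then suffices (Lemma \ref{x_bar_suffices}) to destabilise the single fibre $\pi^{-1}(\overline{x})$, and one fixed one-parameter subgroup $\tau_0=(1,-1,0,\dots,0\,|\,-1,1,0,\dots,0)$ does this uniformly: one computes $\mu^{\Mb}(\cdot,\tau_0)$ only on the $\tau_0$-fixed points of that fibre, whose combinatorics (the positions of $e_1\otimes f_2$, $e_2\otimes f_1$ and of the weight-$\pm1$ vectors in the flag) produce exactly the bracketed expression. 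Without this reduction your ``finite list'' has no definition, so the optimisation producing $D_1$ cannot even be set up.

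Second, and independently: even if you complete the computation, the Hilbert--Mumford argument yields the inclusion, hence stabilisation, only for $d$ \emph{strictly} greater than $d_0=\tfrac12(\cdots)$; this is precisely Proposition \ref{strict_bound_murnaghan}. Since the bracket is an integer, $d_0$ is an integer or a half-integer. In the half-integer case $d\ge\lceil d_0\rceil$ is equivalent to $d>d_0$ and you are done; but when $d_0\in\Z$ the theorem claims that $g_{\lambda+d_0(1),\mu+d_0(1),\nu+d_0(1)}$ already equals the limit value, which the inclusion argument does not give (it gives it only from $d_0+1$ on). Your remark that ``the ceiling reflects the integrality of $d$'' conflates $\lceil d_0\rceil$ with $\lfloor d_0\rfloor+1$. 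The paper closes this one-unit gap by a separate argument (Sections \ref{section_quasipolynomial}--\ref{improvement}): the inclusion actually holds for all \emph{rational} $d>d_0$, so $\Mb+d_0\Lb$ lies in the closure of the GIT-chamber containing the $\Mb+d\Lb$, and the piecewise quasipolynomiality of $\Nb\mapsto\dim\h^0(X,\Nb)^G$ on closures of chambers (proved via descent of line bundles along a finite-index sublattice and Riemann--Roch for singular varieties) forces the value at $d_0$ to agree with the constant value beyond it. Without this ingredient you can only prove the theorem with $D_1$ replaced by $\lfloor d_0+1\rfloor$.
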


\noindent (it is in this case legitimate to reorder the partitions $\lambda$, $\mu$, and $\nu$ to get the lowest bound $D_1$ possible) and

\begin{theo}\label{thm_bound2}
If $m=\max(-\lambda_2-\mu_1,-\lambda_1-\mu_2)$, and
\vspace{-5mm}
\begin{changemargin}{-2mm}{-2mm}
\[ D_2=\left\lbrace\begin{array}{ll}
\left\lceil\dps\frac{1}{2}\left(m+\lambda_3+ \mu_3+2(\nu_2-\nu_{n_1n_2})+\dps\sum_{k=1}^{n_1+n_2-4}(\nu_{k+2}-\nu_{n_1n_2-k})\right)\right\rceil & \text{if }n_1,n_2\geq 3\\
\left\lceil\dps\frac{1}{2}\left(m+\mu_3+2\nu_2-\nu_{2n_2}+\dps\sum_{k=1}^{n_2-1} \nu_{k+2}\right)\right\rceil & \text{if }n_1=2\\
\left\lceil\dps\frac{1}{2}\left(m+\lambda_3+2\nu_2-\nu_{2n_1}+\dps\sum_{k=1}^{n_1-1} \nu_{k+2}\right)\right\rceil & \text{if }n_2=2
\end{array}\right., \]
\end{changemargin}
then for all $d\geq D_2$, $g_{\lambda+d(1,1),\mu+d(1,1),\nu+d(2)}=g_{\lambda+D_2(1,1),\mu+D_2(1,1),\nu+D_2(2)}$.
\end{theo}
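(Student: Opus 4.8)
The plan is to specialise the general machinery to the triple $\big((1,1),(1,1),(2)\big)$ and then to carry out, by hand, the comparison of semi-stable loci provided by Proposition~\ref{prop_intro}. Concretely, write $V_1=\C^{n_1}$, $V_2=\C^{n_2}$, let $G=\gl(V_1)\times\gl(V_2)$ act on $V_1\otimes V_2\cong\C^{n_1n_2}$, and let $X$ be the relevant product of flag varieties attached to $V_1$, $V_2$, and $V_1\otimes V_2$, with $\Lb$ built from $(1,1),(1,1),(2)$ and $\Mb$ built from $\lambda,\mu,\nu$ as in Section~\ref{link_with_line_bundles}. By the second bullet of Proposition~\ref{prop_intro} it suffices to exhibit an integer $D_2$ such that $X^{ss}(\Mb+d\Lb)\subset X^{ss}(\Lb)$ for every $d\geq D_2$; the content of the theorem is that $D_2$ may be taken equal to the displayed expression.

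First I would pass to the contrapositive: the inclusion holds exactly when every $\Lb$-unstable point is also $(\Mb+d\Lb)$-unstable. By the Hilbert--Mumford numerical criterion a point $x$ is $\Nb$-unstable iff there is a one-parameter subgroup $\tau$ of $G$ with $\mu^{\Nb}(x,\tau)<0$, and since the Mumford function is linear in the bundle one has $\mu^{\Mb+d\Lb}(x,\tau)=\mu^{\Mb}(x,\tau)+d\,\mu^{\Lb}(x,\tau)$. Thus, given an $\Lb$-unstable $x$ together with a destabilising $\tau$ (so $\mu^{\Lb}(x,\tau)<0$), the point remains unstable for $\Mb+d\Lb$ as soon as $d\geq -\mu^{\Mb}(x,\tau)/\mu^{\Lb}(x,\tau)$. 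The sought $D_2$ is therefore the supremum of these ratios over all $\Lb$-unstable configurations.

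The heart of the argument is to render this supremum effective. The key reduction is that, on a product of flag varieties, the destabilising one-parameter subgroups may be taken dominant and indivisible; fixing a maximal torus $T$ of $G$, the weights $\mu^{\Lb}(x,\tau)$ and $\mu^{\Mb}(x,\tau)$ then become explicit piecewise-linear functions of the weights of $\tau$ and of the $T$-fixed point to which $x$ flows. Because $\Lb$ comes from the very small triple $\big((1,1),(1,1),(2)\big)$, its unstable locus is governed by only a handful of extremal $\tau$, which I would enumerate, computing $\mu^{\Lb}$ and $\mu^{\Mb}$ directly for each. The characteristic terms $\nu_2-\nu_{n_1n_2}$ and $\sum_{k}(\nu_{k+2}-\nu_{n_1n_2-k})$ are precisely the value of $\mu^{\Mb}(x,\tau)$ returned by the worst $\tau$, whose weights on $V_1\otimes V_2$ pair the largest parts of $\nu$ against the smallest; the factor $\tfrac12$ and the ceiling reflect that the optimal $\mu^{\Lb}(x,\tau)$ has absolute value $2$ and that $d$ is constrained to be an integer, while the maximum defining $m$ records a genuine competition between two extremal destabilising directions in the $V_1,V_2$ factors.

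The main obstacle I anticipate is exactly the identification of this optimal $\tau$ and the verification that no other configuration forces a larger ratio: this is a finite but delicate optimisation over the dominant chamber, sensitive to the lengths $n_1,n_2$. The three regimes in the formula ($n_1,n_2\geq 3$, $n_1=2$, $n_2=2$) arise because the available destabilising directions, and hence the shape of the extremal weight pattern on $V_1$ and $V_2$, degenerate when a factor has dimension two, forcing $\lambda_3$ or $\mu_3$ to drop out and the sum over $k$ to be truncated accordingly. Once the extremal $\tau$ is pinned down in each regime, the bound follows by substituting its weights into the two Mumford functions and simplifying.
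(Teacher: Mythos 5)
Your overall strategy (Hilbert--Mumford criterion, linearity of $\mu$ in the line bundle, then an optimisation over destabilising data) is the correct general framework and is indeed how the paper proceeds, but there are two genuine gaps. First, the reduction to a finite, computable optimisation is not justified as stated. You propose to take ``the supremum of these ratios over all $\Lb$-unstable configurations'', but for the inclusion $X^{us}(\Lb)\subset X^{us}(\Mb+d\Lb)$ what is needed is, for \emph{each} unstable $x$, \emph{some} destabilising $\tau$ whose ratio is at most $d$: the relevant quantity is a supremum over $x$ of an infimum over $\tau$, and it a priori ranges over infinitely many points $x$. The paper makes this finite not by an abstract appeal to dominant indivisible $\tau$'s, but by first projecting to $\overline{X}=\fl(V_1;1,2)\times\fl(V_2;1,2)\times\pp((V_1\otimes V_2)^*)$, on which $\Lb$ descends to an ample $\overline{\Lb}$, and then proving (the analogue of Proposition \ref{orbits_murnaghan}) that $\overline{X}^{us}(\overline{\Lb})$ is the union of the closures of exactly \emph{two} $G$-orbits with explicit representatives $\overline{x}_1,\overline{x}_2$. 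One then fixes a single explicit one-parameter subgroup $\tau_i$ per orbit and, using that $X^{us}(\Mb+d\Lb)$ is closed and $G$-stable and that $\pi$ is $G$-equivariant, reduces everything to computing $\max\left(-\mu^{\Mb}(x,\tau_i)\right)$ over the $\tau_i$-fixed points of the fibres $\pi^{-1}(\overline{x}_i)$. Your sketch gestures at this (``a handful of extremal $\tau$''), but the orbit classification is the step that actually carries the argument, and it is also where the case distinction $n_1,n_2\geq 3$ versus $n_1=2$ or $n_2=2$ genuinely enters (the representatives and the subgroups $\tau_1,\tau_2$ degenerate when a factor has dimension two).

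Second, and more decisively: even executed perfectly, your argument cannot yield the bound $D_2=\lceil d_0\rceil$ claimed in the theorem. The Hilbert--Mumford inequality you need is strict: if $d$ equals the critical ratio then $\mu^{\Mb+d\Lb}(x,\tau)=0$, which does \emph{not} destabilise $x$ (your ``as soon as $d\geq -\mu^{\Mb}(x,\tau)/\mu^{\Lb}(x,\tau)$'' must be a strict inequality). So this method proves the inclusion of semi-stable loci only for $d>d_0$, hence stabilisation from $\lfloor d_0+1\rfloor$ on --- which is $D_2+1$, not $D_2$, whenever $d_0$ is an integer; this weaker statement is exactly Proposition \ref{strict_bounds_other_case} of the paper. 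To reach $D_2=\lceil d_0\rceil$ the paper needs a separate ingredient (Section \ref{improvement}): using the structure of the GIT-fan together with a piecewise-quasipolynomiality result --- proved via descent of line bundles to the GIT quotient and Riemann--Roch for singular varieties (Lemma \ref{lemma_lattice} and Theorem \ref{theo_quasipolynomial}) --- it shows that if $X^{ss}(\Mb+d\Lb)\subset X^{ss}(\Lb)$ for all rational $d>d_0$, then $\dim\h^0(X,\Mb+d_0\Lb)^G$ already equals the limit value $\dim\h^0(S,\Mb)^H$. Without this step your proof establishes a strictly weaker bound than the one in the statement.
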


We then prove that our method allows to recover some of the bounds already existing in the case of Murnaghan's stability: we re-obtain Brion's bound, as well as the second one given by Briand, Orellana, and Rosas. Moreover, we get slight improvements for these in some cases. The bounds we obtained are in addition tested on some examples, in Section \ref{tests}. We also make a comparison on these examples with the four already existing bounds that we cited.

\vspace{5mm}

In Sections \ref{plethysm} and \ref{section_Bn}, using our method, we prove that weak stability also implies stability for some other coefficients arising in Representation Theory: at first for plethysm coefficients (the main result was already in \cite{sam-snowden} and \cite{paradan}), and then for the multiplicities in the tensor product of two irreducible representations of the hyperoctahedral group, which is the Weyl group of type $\mathrm{B}_n$.

\vspace{5mm}

\textit{Acknowledgements:} I would very much like to thank Nicolas Ressayre for extremely useful ideas and remarks during the preparation of this article. Thank you also to Michèle Vergne and Paul-Emile Paradan for pointing out interesting references and for their interest in this work.\\
This is a pre-print of an article published in \textit{manuscripta mathematica}. The final authenticated version is available online at: https://doi.org/10.1007/s00229-018-1021-4.\\
The author also acknowledges support from the French ANR (ANR project ANR-15-CE40-0012).

\section[Proof of Theorem 1.2 and Proposition 1.3]{Proof of Theorem \ref{big_theo} and Proposition \ref{prop_intro}}\label{general_section}

\subsection{Link with invariant sections of line bundles}\label{link_with_line_bundles}

Thanks to Schur-Weyl duality, the Kronecker coefficients also appear in the decomposition of representations of the general linear group. If $V_1$ and $V_2$ are two (complex) vector spaces, $\gamma$ is a partition, and if we denote by $\Sc$ the Schur functor\footnote{In other words, if $V$ is a complex vector space of dimension $n$, and $\lambda$ a partition of length $\leq n$, then $\Sc^\lambda(V)$ is the corresponding irreducible representation of $\gl(V)$. Moreover, all complex irreducible polynomial representations of this group are obtained this way.},
\[ \Sc^\gamma(V_1\otimes V_2)\simeq\bigoplus_{\alpha,\beta}\left(\Sc^\alpha(V_1)\otimes\Sc^\beta(V_2)\right)^{\oplus g_{\alpha,\beta,\gamma}} \]
as representations of $G=\gl(V_1)\times\gl(V_2)$. Then, by Schur's Lemma we have, for all triple $(\alpha,\beta,\gamma)$ of partitions (such that $|\alpha|=|\beta|=|\gamma|$) and all vector spaces $V_1$ and $V_2$ such that $\dim(V_1)\geq\ell(\alpha)$, $\dim(V_2)\geq\ell(\beta)$, and $\dim(V_1)\dim(V_2)\geq\ell(\gamma)$:
\[ g_{\alpha,\beta,\gamma}=\dim\left((\Sc^\alpha V_1)^*\otimes(\Sc^\beta V_2)^*\otimes\Sc^\gamma(V_1\otimes V_2)\right)^G. \]
Finally, we use Borel-Weil's Theorem: if $V$ is a complex vector space of finite dimension, we denote by $\fl(V)$ the complete flag variety associated to $V$. We know that, if $B$ is a Borel subgroup of $\gl(V)$, the variety $\fl(V)$ is isomorphic to $\gl(V)/B$. We can then define particular line bundles over $\gl(V)/B$: for any partition $\lambda$ of length at most $\dim V$, the finite sequence of integers $-\lambda$ defines a character $e^{-\lambda}$ of $B$, and this allows us to define $\Lb_\lambda=\gl(V)\times_B\C_{-\lambda}$, where $\C_{-\lambda}$ is the one-dimensional complex representation of $B$ given by the character $e^{-\lambda}$. The fibre product $\Lb_\lambda$ is a $\gl(V)$-linearised line bundle over $\gl(V)/B\simeq\fl(V)$. Then Borel-Weil's Theorem states that the representation $(\Sc^\alpha V_1)^*$ is isomorphic to $\h^0(\fl(V_1),\Lb_\alpha)$, the space of sections of the line bundle $\Lb_\alpha$ over $\fl(V_1)$. It is the same for $(\Sc^\beta V_2)^*$, and for $V_1\otimes V_2$ it yields $\Sc^\gamma(V_1\otimes V_2)\simeq\h^0(\fl(V_1\otimes V_2),\Lb_\gamma^*)$. Hence, we have the important following proposition:

\begin{prop}
For any triple $(\alpha,\beta,\gamma)$ of partitions such that $|\alpha|=|\beta|=|\gamma|$, there exist a reductive group G, a projective variety $X$ on which $G$ acts, and a $G$-linearised line bundle $\Lb_{\alpha,\beta,\gamma}$ over $X$ such that
\[ g_{\alpha,\beta,\gamma}=\dim\left(\h^0(X,\Lb_{\alpha,\beta,\gamma})^G\right). \]
\end{prop}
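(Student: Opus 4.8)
The plan is to assemble the final Proposition directly from the three ingredients already laid out in the text—Schur-Weyl duality, Schur's Lemma, and the Borel-Weil theorem—by taking $G$ to be the product of general linear groups, $X$ a product of flag varieties, and $\Lb_{\alpha,\beta,\gamma}$ the appropriate external tensor product of the line bundles $\Lb_\alpha$, $\Lb_\beta$, $\Lb_\gamma$. Let me sketch the whole chain so that the identification is unambiguous.

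Let me proceed through the steps in order. First I would fix complex vector spaces $V_1$ and $V_2$ of dimensions $n_1\geq\ell(\alpha)$, $n_2\geq\ell(\beta)$, and $n_1n_2\geq\ell(\gamma)$, which are precisely the hypotheses under which the combination of Schur-Weyl duality and Schur's Lemma in the excerpt yields
\[ g_{\alpha,\beta,\gamma}=\dim\left((\Sc^\alpha V_1)^*\otimes(\Sc^\beta V_2)^*\otimes\Sc^\gamma(V_1\otimes V_2)\right)^G, \]
with $G=\gl(V_1)\times\gl(V_2)$ acting diagonally (and $\gl(V_1)\times\gl(V_2)$ acting on $V_1\otimes V_2$ through the natural embedding into $\gl(V_1\otimes V_2)$). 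Second, I would set $X=\fl(V_1)\times\fl(V_2)\times\fl(V_1\otimes V_2)$, on which $G$ acts factor by factor via the embedding just described, and define the $G$-linearised line bundle $\Lb_{\alpha,\beta,\gamma}=\Lb_\alpha\boxtimes\Lb_\beta\boxtimes\Lb_\gamma^*$ as the external tensor product of the three line bundles from Borel-Weil. Third, by the Künneth formula, the global sections of an external tensor product factor as the tensor product of the global sections, so that $\h^0(X,\Lb_{\alpha,\beta,\gamma})\simeq\h^0(\fl(V_1),\Lb_\alpha)\otimes\h^0(\fl(V_2),\Lb_\beta)\otimes\h^0(\fl(V_1\otimes V_2),\Lb_\gamma^*)$ as $G$-representations; applying the three Borel-Weil identifications quoted in the excerpt turns the right-hand side into exactly $(\Sc^\alpha V_1)^*\otimes(\Sc^\beta V_2)^*\otimes\Sc^\gamma(V_1\otimes V_2)$.

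Taking $G$-invariants commutes with these isomorphisms (invariants form a functor on $G$-representations), so $\h^0(X,\Lb_{\alpha,\beta,\gamma})^G$ is isomorphic to the invariant space computed above, and comparing dimensions gives the claimed equality $g_{\alpha,\beta,\gamma}=\dim\left(\h^0(X,\Lb_{\alpha,\beta,\gamma})^G\right)$. It remains only to note that $X$ is projective (a finite product of flag varieties is projective) and $G$ reductive (a product of general linear groups), so all the structural requirements of the statement are met.

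I expect the main subtlety to be bookkeeping rather than conceptual: one must be careful that the $G$-action on $\fl(V_1\otimes V_2)$ is the one induced by the embedding $\gl(V_1)\times\gl(V_2)\hookrightarrow\gl(V_1\otimes V_2)$ (not the full $\gl(V_1\otimes V_2)$-action), so that the Borel-Weil isomorphism $\Sc^\gamma(V_1\otimes V_2)\simeq\h^0(\fl(V_1\otimes V_2),\Lb_\gamma^*)$ is one of $G$-modules and not merely of vector spaces. The one genuinely verifiable point is that Künneth respects the $G$-linearisation and hence the $G$-module structure on sections; once that is granted, the rest is formal.
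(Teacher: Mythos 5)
Your proposal is correct and follows essentially the same route as the paper: the paper's proof simply takes $V_1$, $V_2$ of large enough dimension, $G=\gl(V_1)\times\gl(V_2)$, $X=\fl(V_1)\times\fl(V_2)\times\fl(V_1\otimes V_2)$, and $\Lb_{\alpha,\beta,\gamma}=\Lb_\alpha\otimes\Lb_\beta\otimes\Lb_\gamma^*$, invoking the preceding Schur--Weyl, Schur's Lemma, and Borel--Weil discussion exactly as you do. Your explicit appeal to the K\"unneth factorisation of sections and the remark about the $G$-action on $\fl(V_1\otimes V_2)$ coming from the embedding $\gl(V_1)\times\gl(V_2)\hookrightarrow\gl(V_1\otimes V_2)$ merely make precise what the paper leaves implicit.
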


\begin{proof}
According to what precedes, it suffices to take $V_1$ and $V_2$ two vector spaces of large enough dimension, $G=\gl(V_1)\times\gl(V_2)$, $X=\fl(V_1)\times\fl(V_2)\times\fl(V_1\otimes V_2)$, and $\Lb_{\alpha,\beta,\gamma}=\Lb_\alpha\otimes\Lb_\beta\otimes\Lb_\gamma^*$.
\end{proof}

Thus, from now on, we consider a weakly stable triple $(\alpha,\beta,\gamma)$ of partitions, and another triple $(\lambda,\mu,\nu)$ of partitions (also satisfying $|\lambda|=|\mu|=|\nu|$). Then there exists a reductive group $G$, acting on a projective variety $X$, and two $G$-linearised line bundles $\Lb$ and $\Mb$ on $X$ such that:
\[ g_{\alpha,\beta,\gamma}=\dim\left(\h^0(X,\Lb)^G\right)\qquad\text{and}\qquad g_{\lambda,\mu,\nu}=\dim\left(\h^0(X,\Mb)^G\right) \]
(we denote by $V_1$ and $V_2$ the two vector spaces used to define those). We are interested in the behaviour of $\h^0(X,\Mb+d\Lb)^G$, or rather its dimension, for $d\in\N$.

\begin{rmk}
When we write $\Mb+d\Lb$, the operation denoted by ``$+$'' is the operation in $\pic^G(X)$, the group of $G$-linearised line bundles over $X$, i.e. the tensor product.
\end{rmk}

\subsection{Semi-stable points}\label{ss_points}

\subsubsection{Definition and criterion of semi-stability}

\begin{de}
Given a $G$-linearised line bundle $\Nb$ over $X$, we define the semi-stable points in $X$ (relatively to $\Nb$) as the elements of
\[ X^{ss}(\Nb)=\{x\in X\text{ s.t. }\exists k\in\N^*, \; \exists\sigma\in\h^0(X,\Nb^{\otimes k})^G,\; \sigma(x)\neq 0\}. \]
The points which are not semi-stable are said to be unstable (relatively to $\Nb$), and we denote by $X^{us}(\Nb)$ the set of unstable points.
\end{de}

Let us emphasise that this is not the standard definition of semi-stability (cf. for instance \cite{dolgachev}, Chapter 8): most often there is an additional requirement to fulfil for a point to be semi-stable. The definition we gave coincides nevertheless with the usual one in the case of an ample line bundle. The following result is then classical\footnote{Remark: it was proven by V. Guillemin and S. Sternberg in \cite{guillemin-sternberg}, before the article from Teleman which I cite thereafter.}:

\begin{prop}\label{prop_teleman}
If $\Nb$ is a $G$-linearised semi-ample line bundle over $X$, then
\[ \h^0(X,\Nb)^G\simeq\h^0(X^{ss}(\Nb),\Nb)^G. \]
\end{prop}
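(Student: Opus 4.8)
The plan is to show that the natural restriction morphism
$\h^0(X,\Nb)^G\to\h^0(X^{ss}(\Nb),\Nb)^G$
— which makes sense precisely because $X^{ss}(\Nb)$ is a $G$-stable open subset of $X$ — is an isomorphism, by treating injectivity and surjectivity separately.

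Injectivity should be the easy half and needs no hypothesis beyond the definition of $X^{ss}(\Nb)$. If $\sigma\in\h^0(X,\Nb)^G$ restricts to $0$ on $X^{ss}(\Nb)$, I would observe that its non-vanishing locus $\{x\in X:\sigma(x)\neq 0\}$ is contained in $X^{ss}(\Nb)$: every such point is semi-stable, witnessed by $\sigma$ itself (taking $k=1$). Since $\sigma$ vanishes on $X^{ss}(\Nb)$, hence on its own non-vanishing locus, that locus is empty, so $\sigma$ vanishes identically and $\sigma=0$ as $X$ is reduced.

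Surjectivity is where semi-ampleness and the reductivity of $G$ enter, and it is the main obstacle. First I would use that, $G$ being reductive and $\Nb$ semi-ample, the graded ring $R=\bigoplus_{k\geq 0}\h^0(X,\Nb^{\otimes k})^G$ is finitely generated; choosing homogeneous generators $\sigma_1,\dots,\sigma_r$ of positive degrees $d_1,\dots,d_r$, the definition of semi-stability gives exactly $X^{ss}(\Nb)=\bigcup_{i=1}^r X_{\sigma_i}$, where $X_{\sigma_i}=\{\sigma_i\neq 0\}$. Given an invariant section $s$ on $X^{ss}(\Nb)$, the task is to extend it across the unstable locus $X\setminus X^{ss}(\Nb)$. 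On each $X_{\sigma_i}$ one expects that, after multiplying by a large power $\sigma_i^N$, the section $\sigma_i^N s$ extends to a global invariant section of $\Nb^{\otimes(N d_i+1)}$, the underlying principle being that sections over $X_{\sigma_i}$ are obtained from global ones by localising at $\sigma_i$; these extensions must then be shown to glue, on the overlaps $X_{\sigma_i}\cap X_{\sigma_j}$, into a genuine global section of $\Nb$ restricting to $s$.

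The delicate point — and the reason this is not literally classical GIT — is that for a merely semi-ample (rather than ample) $\Nb$ the opens $X_{\sigma_i}$ need not be affine, so the localisation/extension statement is not immediate. I would handle this by reducing to the ample case through the morphism $\phi:X\to Y$ associated to a globally generated power $\Nb^{\otimes m}$: being built from invariant sections, $\phi$ is $G$-equivariant onto a projective $G$-variety $Y$ carrying an ample $G$-linearised $\oo_Y(1)$ with $\phi^*\oo_Y(1)\simeq\Nb^{\otimes m}$, and (after Stein factorisation, so that $\phi_*\oo_X=\oo_Y$) the projection formula identifies $\h^0(X,\Nb^{\otimes mj})^G$ with $\h^0(Y,\oo_Y(j))^G$ and matches $X^{ss}(\Nb)$ with $\phi^{-1}\big(Y^{ss}(\oo_Y(1))\big)$. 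On $Y$ the sets $Y_\sigma$ are affine, the localisation statement is the standard one, and the extension and gluing go through as in ordinary GIT; pulling back through $\phi$ then yields the sought global section. The only remaining care concerns degree $1$ in $\Nb$ itself, which need not descend to $Y$; this is absorbed by running the argument simultaneously for all powers $\Nb^{\otimes k}$ (noting $X^{ss}(\Nb^{\otimes k})=X^{ss}(\Nb)$), or by appealing directly to the theorem of Guillemin–Sternberg and Teleman quoted above, which is exactly this statement.
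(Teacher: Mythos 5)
Your injectivity argument is correct, and it is indeed the easy half: the non-vanishing locus of a global invariant section of $\Nb$ is, by the very definition used here, contained in $X^{ss}(\Nb)$, so an invariant section vanishing on $X^{ss}(\Nb)$ vanishes identically. The reduction machinery you set up for surjectivity --- a $G$-equivariant fibration $\phi:X\to Y$ with $\phi_*\oo_X=\oo_Y$, an ample $\oo_Y(1)$ with $\phi^*\oo_Y(1)\simeq\Nb^{\otimes m}$, the projection formula, and the identification $X^{ss}(\Nb)=\phi^{-1}\big(Y^{ss}(\oo_Y(1))\big)$ --- is also sound, and it is in essence the strategy of the paper. (Incidentally, your stated reason for needing it is off target: the localisation lemma, that $\sigma_i^{N}s$ extends to a global section of a power after multiplying by a large power of $\sigma_i$, holds for any line bundle on any quasi-compact separated scheme; affineness of $X_{\sigma_i}$ is not the issue. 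The real difficulty in your first, direct approach is that the extensions are sections of the various bundles $\Nb^{\otimes(Nd_i+1)}$, and you give no mechanism for producing from them a section of $\Nb$ itself.)

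The genuine gap is precisely the point you flag at the end and then wave away: the statement concerns $\h^0(X,\Nb)^G$ in degree $1$, and $\Nb$ itself need not descend along $\phi$ --- only $\Nb^{\otimes m}$ does. Neither of your two patches closes this. Running the argument for all powers only proves the statement for the bundles $\Nb^{\otimes k}$ with $m\mid k$; pushed forward to $Y$, the degree-$1$ statement becomes a restriction theorem for the coherent $G$-sheaf $\phi_*\Nb$, which is not a power of $\oo_Y(1)$, and no such theorem holds for general coherent $G$-sheaves (a sheaf supported on the unstable locus already kills injectivity). And appealing ``directly'' to Guillemin--Sternberg/Teleman is circular: that theorem is the ample case, i.e.\ exactly what you have already reduced to, and it says nothing about degree $1$ of a merely semi-ample bundle. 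What closes the gap --- and what the paper actually does --- is the specific geometry of this setting: $X$ is a product of flag varieties, every semi-ample $\Nb$ there is an $\Lb_\delta$ with $\delta$ dominant, hence globally generated, and such a bundle is \emph{itself}, not merely a power of it, the pull-back under the projection $\pi:X\to\overline{X}$ (to the partial flag variety of the type of $\delta$) of an ample bundle $\overline{\Nb}$. With $\Nb=\pi^*\overline{\Nb}$ in degree $1$, the chain $\h^0(X,\Nb)^G\simeq\h^0(\overline{X},\overline{\Nb})^G\simeq\h^0(\overline{X}^{ss}(\overline{\Nb}),\overline{\Nb})^G\simeq\h^0(X^{ss}(\Nb),\Nb)^G$ goes through, using $\pi_*\oo_X=\oo_{\overline{X}}$, $\pi^{-1}(\overline{X}^{ss}(\overline{\Nb}))=X^{ss}(\Nb)$, and the cited ample-case theorem for the middle isomorphism. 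In your language: in this paper's setting one may take $m=1$, and with that single observation your argument becomes complete.
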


\begin{proof}
A proof of this result for ample line bundles can for example be found in \cite{teleman}, Theorem 2.11(a). C. Teleman gives it with the more usual definition of semi-stable points, which is not ours, but coincides with it in this case. Then, in the case of a semi-ample line bundle $\Nb$, there exists a $G$-equivariant projection $\pi:X\rightarrow\overline{X}$ (which is even a fibration with connected fibres) such that $\Nb$ is the pull-back by $\pi$ of an ample line bundle $\overline{\Nb}$ over a projective variety $\overline{X}$.\\
Indeed, $X$ is a product of flag varieties and, on such a variety, a semi-ample line bundle is a $\Lb_\delta$ for $\delta$ a partition. Moreover this $\Lb_\delta$ is ample if and only if the type of the partition (i.e. the indices $i$ such that $\delta_i>\delta_{i+1}$) coincides with the type of the flag variety. Henceforth, for every partition $\delta$, there exists a projection as announced above, which consists simply in forgetting in the flag variety the dimensions which do not appear in the type of $\delta$.

\vspace{5mm}

Then, with the properties of $\pi$,
\[ \h^0(X,\Nb)^G\simeq\h^0(\overline{X},\overline{\Nb})^G\simeq\h^0(\overline{X}^{ss}(\overline{\Nb}),\overline{\Nb})^G\simeq\h^0(X^{ss}(\Nb),\Nb)^G, \]
since $\pi^{-1}(\overline{X}^{ss}(\overline{\Nb}))=X^{ss}(\Nb)$.
\end{proof}

There is an extremely useful criterion of semi-stability which is called the Hilbert-Mumford criterion. It is generally stated for ample line bundles but, with the previously given definition of semi-stability, it holds for semi-ample line bundles (cf. \cite{ressayre}, Lemma~2), which is the case for all the line bundles we consider. We are going to rephrase this criterion to get a more geometric one, in terms of polytopes. Let us begin with the case in which a torus $T$ acts on $X$, and $\Nb$ is a $T$-linearised ample line bundle over $X$.

\vspace{5mm}

Then (see e.g. \cite{dolgachev}, Section 9.4), as $\Nb$ is ample, we have a closed embedding of $X$ in $\pp(V)$, where $V$ is a finite dimensional vector space, the action of $T$ on $X$ comes from a linear action on $V$, and some positive tensor power of $\Nb$ is the restriction of $\oo(1)$ to $X$. Then, since $T$ is a torus, $V$ splits into a direct sum of eigensubspaces,
\[ V=\bigoplus_{\chi\in X^*(T)} V_\chi, \]
where $X^*(T)$ denotes the set of all characters of $T$ and, for all $\chi\in X^*(T)$, $V_\chi=\{v\in V\text{ s.t. }\forall t\in T, \; t.v=\chi(t)v\}$ is the eigensubspace associated to the character $\chi$. Then, for $x\in X\subset\pp(V)$ and a $v=\sum_{\chi}v_\chi\in V$ ($v_\chi\in V_\chi$) such that $x=\spa(v)$, we define the weight set of $x$ as
\[ \wt(x)=\{\chi\in X^*(T)\text{ s.t. }v_\chi\neq 0\}. \]
Note that $\wt(x)$ is a finite subset of $X^*(T)\simeq\Z^N\subset\R^N$ ($N$ is the rank of $T$). We finally define the weight polytope of $x$ as the convex hull $\conv(\wt(x))$ of $\wt(x)$ in $\R^N$. Then, Theorem 9.2 of \cite{dolgachev} states that the Hilbert-Mumford criterion means:
\[ x\in X^{ss}(\Nb)\Longleftrightarrow 0\in\conv(\wt(x)). \]
We want to express this in a way which does not use an embedding in a $\pp(V)$, and which involves explicitly $\Nb$. For this, one has to wonder what any object in $\pp(V)$ corresponds to in $X$:
\[ \begin{array}{c|c}
\text{In }\pp(V) & \text{In }X\\
\hline
\pp(V_\chi)\text{ (for }V_\chi\neq\{0\}\text{)} & \text{fixed points of }T\\
\dps\bigcup_\chi(\pp(V_\chi)\cap X) & X^T=\{\text{fixed points of }T\text{ in }X\}\\
\pp(V_\chi)\cap X & \text{a union of some irreducible components $X_1,\dots,X_k$ of }X^T\\
\chi^{-1} & \text{character giving the action of $T$ on }\left.\Nb\right|_{X_i}\text{ for }i\in\llbracket 1,k\rrbracket
\end{array} \]
So we set, denoting by $X_1,\dots,X_s$ the irreducible components of $X^T$, for all $i\in\llbracket 1,s\rrbracket$,
\[ \begin{array}{rccl}
\chi_i: & \pic^T(X) & \longrightarrow & X^*(T)\\
 & \Nb & \longmapsto & \text{the inverse of the character giving the action of $T$ on }\left.\Nb\right|_{X_i}
\end{array}. \]
Then, the Hilbert-Mumford criterion states:
\[ x\in X^{ss}(\Nb)\Longleftrightarrow 0\in\conv(\{\chi_i(\Nb)\;;\;i\in\llbracket 1,s\rrbracket\text{ s.t. }\chi_i(\Nb)\text{ is a vertex of }\conv(\wt(x))\}). \]
And the only object left which uses an embedding of $X$ in $\pp(V)$ is $\wt(x)$. But we can get rid of it thanks to the following lemma:

\begin{lemma}
With the notations used above, if $x=\spa\left(\sum_{\chi}v_\chi\right)\in X\subset\pp(V)$,
\[ \chi\text{ is a vertex of }\conv(\wt(x))\Longleftrightarrow \pp(V_\chi)\cap\overline{T.x}\neq\emptyset. \]
\end{lemma}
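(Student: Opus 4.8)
The plan is to understand the projective orbit closure $\overline{T.x}$ through the limits of $x$ under one-parameter subgroups of $T$, reading off from the decomposition $V=\bigoplus_\chi V_\chi$ which weights survive in such limits. The computation underlying everything is the following: a one-parameter subgroup $\tau:\C^*\to T$ acts on a weight vector $v_\chi\in V_\chi$ by $\tau(s).v_\chi=s^{\langle\chi,\tau\rangle}v_\chi$, where $\langle\cdot,\tau\rangle$ is the integral linear form on $X^*(T)$ given by pairing with $\tau$. Hence, writing $x=\spa\left(\sum_{\chi\in\wt(x)}v_\chi\right)$ and $m=\min_{\chi\in\wt(x)}\langle\chi,\tau\rangle$, one has in $\pp(V)$
\[ \tau(s).x=\spa\Big(\sum_{\chi\in\wt(x)}s^{\langle\chi,\tau\rangle-m}v_\chi\Big)\xrightarrow[s\to 0]{}\spa\Big(\sum_{\chi\in M_\tau}v_\chi\Big), \]
where $M_\tau=\{\chi\in\wt(x)\ :\ \langle\chi,\tau\rangle=m\}$ is the set of minimisers. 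This limit lies in $\overline{T.x}$ and its weight set is exactly $M_\tau$.

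For the implication "$\chi$ vertex $\Rightarrow\pp(V_\chi)\cap\overline{T.x}\neq\emptyset$", I would use that a vertex of a rational polytope is isolated by an integral linear form: if $\chi$ is a vertex of $\conv(\wt(x))$, choose $\tau$ so that $\langle\cdot,\tau\rangle$ attains its minimum over $\wt(x)$ uniquely at $\chi$ (such a $\tau$ exists since the defining strict inequalities have rational coefficients). Then $M_\tau=\{\chi\}$, and the computation above gives $\lim_{s\to 0}\tau(s).x=\spa(v_\chi)\in\pp(V_\chi)\cap\overline{T.x}$.

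For the converse I would argue by contraposition: assuming $\chi$ is not a vertex of $\conv(\wt(x))$, I show $\pp(V_\chi)\cap\overline{T.x}=\emptyset$. If $\chi\notin\wt(x)$ this is immediate, since the vanishing locus $\{w_\chi=0\}$ is a $T$-stable closed subset of $\pp(V)$ containing $T.x$, hence containing $\overline{T.x}$, so no point of the closure has a nonzero $\chi$-component. If $\chi\in\wt(x)$ but is not a vertex, then $\chi\in\conv(\wt(x)\setminus\{\chi\})$, and by rationality there are a positive integer $m$ and non-negative integers $(m_{\chi'})_{\chi'\in\wt(x)\setminus\{\chi\}}$ with $\sum_{\chi'}m_{\chi'}=m$ and $\sum_{\chi'}m_{\chi'}\chi'=m\chi$. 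Choosing linear forms $\ell_{\chi'}\in V^*$ supported on $V_{\chi'}$ (so $\ell_{\chi'}$ has weight $-\chi'$), the rational function $F=\big(\prod_{\chi'}\ell_{\chi'}^{m_{\chi'}}\big)/\ell_\chi^{\,m}$ is homogeneous of degree $0$, descends to $\pp(V)$, and the weight relation makes it $T$-invariant. Arranging the $\ell_{\chi'}$ and $\ell_\chi$ to be nonzero on the relevant components, $F$ is regular and constant (with nonzero value) on the irreducible set $T.x$, hence has that same nonzero value at every point of $\overline{T.x}$ where it is regular. But at any point of $\pp(V_\chi)$ the numerator vanishes (some $m_{\chi'}>0$ while $\ell_{\chi'}$ kills $V_\chi$) whereas the denominator stays nonzero, forcing $F=0$ there, a contradiction. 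So $\pp(V_\chi)\cap\overline{T.x}=\emptyset$.

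The main obstacle is the converse, and precisely making the invariant-function argument clean: one must check that the forms can be chosen simultaneously so that $F$ is genuinely regular and non-vanishing along $T.x$ and regular at the candidate point of $\pp(V_\chi)$, and that passing to the limit on the open locus where $F$ is defined is legitimate. Should this become delicate, an alternative is to invoke the structure of torus orbit closures as (possibly non-normal) toric varieties: every point of $\overline{T.x}$ has weight polytope equal to a face of $\conv(\wt(x))$, so a point with singleton weight set $\{\chi\}$ forces $\{\chi\}$ to be a $0$-dimensional face, i.e. a vertex; combined with the first paragraph this yields the equivalence directly.
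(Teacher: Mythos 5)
Your proof is correct, and while your forward implication coincides with the paper's (both isolate the vertex $\chi$ by an integral one-parameter subgroup $\tau$ on which the pairing is uniquely minimised at $\chi$, then take $\lim_{s\to 0}\tau(s).x=\spa(v_\chi)$), your converse takes a genuinely different route. The paper argues by contradiction: if $\pp(V_\chi)\cap\overline{T.x}\neq\emptyset$, it asserts that some $\tau\in X_*(T)$ satisfies $\lim_{z\to 0}\tau(z).x\in\pp(V_\chi)$, which forces $\langle\chi',\tau\rangle>\langle\chi,\tau\rangle$ for every $\chi'\in\wt(x)\setminus\{\chi\}$ and hence makes $\chi$ a vertex. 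That argument leans on an unjustified step — that a point of the orbit closure lying in $\pp(V_\chi)$ is reachable as a one-parameter-subgroup limit of $x$ — which is precisely the toric fact (closure points have weight polytopes equal to faces) that you mention as your fallback. Your invariant-function argument bypasses this entirely: from the integral relation $m\chi=\sum_{\chi'}m_{\chi'}\chi'$ witnessing non-vertexness you build a degree-zero $T$-invariant rational function $F$ which is a nonzero constant on $\overline{T.x}$ wherever regular, yet vanishes at the candidate point of $\pp(V_\chi)$. The delicate points you flag are real but resolve exactly as you suggest: choose $\ell_\chi$ after fixing the candidate point $y=\spa(w)$, requiring $\ell_\chi(v_\chi)\neq 0$ and $\ell_\chi(w)\neq 0$ (the complement of two hyperplanes in $(V_\chi)^*$, so nonempty), and note that $T.x$ is dense in the open subset $\overline{T.x}\cap\{\ell_\chi\neq 0\}$, so constancy of $F$ propagates to $y$ and gives the contradiction $0=F(y)=F(x)\neq 0$. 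In sum: the paper's converse is shorter but borrows an unproved structural fact about torus orbit closures, whereas yours is self-contained from first principles and also handles cleanly the degenerate case $\chi\notin\wt(x)$.
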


\begin{proof}
Let us recall that there is a duality pairing between $X^*(T)$ and the one-parameter subgroups of $T$, whose set is denoted by $X_*(T)$: for all $\chi\in X^*(T)$ and $\tau\in X_*(T)$, $\chi\circ\tau:\C^*\rightarrow\C^*$ is of the form $z\mapsto z^n$ with $n$ integer. We set $\langle\chi,\tau\rangle=n$. Then, according to a classical property of convex polyhedra:
\vspace{-5mm}
\begin{changemargin}{-5mm}{-5mm}
\[ \chi\text{ is a vertex of }\conv(\wt(x))\Longleftrightarrow\exists\tau\in X_*(T)\text{ s.t. }\left\lbrace\begin{array}{l}
\langle\chi,\tau\rangle=0\\
\forall\chi'\in\conv(\wt(x))\setminus\{\chi\}, \; \langle\chi',\tau\rangle>0
\end{array}\right.. \]
\end{changemargin}
As a consequence, if $\chi$ is a vertex of $\conv(\wt(x))$, we have such a $\tau\in X_*(T)$. Moreover,
\[ \forall z\in\C^*, \: \tau(z).x=\spa\left(\sum_{\chi'}\chi'\circ\tau(z)v_{\chi'}\right)=\spa\left(\sum_{\chi'}z^{\langle\chi',\tau\rangle}v_{\chi'}\right). \]
And thus $\lim\limits_{z\to 0}(\tau(z).x)=\spa(v_{\chi})\in\pp(V_\chi)\cap\overline{T.x}$.

\vspace{5mm}

Conversely, if we suppose that $\chi$ is not a vertex of $\conv(\wt(x))$, then for all $\tau\in X_*(T)$, there exists $\chi(\tau)\in\conv(\wt(x))\setminus\{\chi\}$ such that $\langle\chi(\tau),\tau\rangle=\langle\chi,\tau\rangle$. We want to prove that $\pp(V_\chi)\cap\overline{T.x}=\emptyset$.\\
By contradiction, let us assume that $\pp(V_\chi)\cap\overline{T.x}\neq\emptyset$. Then there exists $\tau\in X_*(T)$ such that $\lim\limits_{z\to 0}(\tau(z).x)\in\pp(V_\chi)$. On the other hand,
\[ \forall z\in\C^*, \: \tau(z).x=\spa\left(\sum_{\chi'}z^{\langle\chi',\tau\rangle}v_{\chi'}\right). \]
So, for every $\chi'\in\wt(x)\setminus\{\chi\}$, $\langle\chi',\tau\rangle>\langle\chi,\tau\rangle$. This contradicts the existence of $\chi(\tau)$, which is necessarily a convex combination involving at least one element of $\wt(x)\setminus\{\chi\}$.
\end{proof}

Then,
\[ x\in X^{ss}(\Nb)\Longleftrightarrow 0\in\conv(\{\chi_i(\Nb)\;;\;i\in\llbracket 1,s\rrbracket\text{ s.t. }X_i\cap\overline{T.x}\neq\emptyset\}), \]
which now does not involve anymore any embedding of $X$ in $\pp(V)$. So this is also true for line bundles which are semi-ample, and not necessarily ample (since Hilbert-Mumford criterion holds for such ones). We now extend this to the case when $G$ is reductive. Then we take a maximal torus $T$ in $G$ and, using Theorem 9.3 of \cite{dolgachev}, we finally get:
\begin{prop}
In our settings (a reductive group $G$ acting on a flag variety $X$), if $\Nb$ is a $G$-linearised semi-ample line bundle over $X$, then
\[ x\in X^{ss}(\Nb)\Longleftrightarrow\forall g\in G, \: 0\in\conv(\{\chi_i(\Nb)\;;\;i\in\llbracket 1,s\rrbracket\text{ s.t. }X_i\cap\overline{T.(g.x)}\neq\emptyset\}), \]
where $T$ is a maximal torus in $G$, and $X_1,\dots,X_s$ are the irreducible components of $X^T$.
\end{prop}

\subsubsection{Inclusions of sets of semi-stable points}

The following proposition could be deduced from well-known results on the GIT-fan (see e.g. \cite{dolgachev-hu}, Section 3.4, or \cite{ressayre2}, Section 5), but we give another proof specific to this case:

\begin{prop}\label{prop_semi-stable}
There exists $D\in\N$ such that, for all $d\geq D$, $X^{ss}(\Mb+d\Lb)\subset X^{ss}(\Lb)$.
\end{prop}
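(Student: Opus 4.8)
The plan is to use the geometric semi-stability criterion just established and to exploit the weak stability hypothesis, which should give us control over $X^{ss}(\Lb)$. Recall that by the last Proposition, for a maximal torus $T$ of $G$ and the irreducible components $X_1,\dots,X_s$ of $X^T$, a point $x$ is semi-stable with respect to a semi-ample bundle $\Nb$ if and only if for every $g\in G$, the origin lies in the convex hull of the $\chi_i(\Nb)$ over those $i$ with $X_i\cap\overline{T.(g.x)}\neq\emptyset$. The key observation is that whether $X_i\cap\overline{T.(g.x)}\neq\emptyset$ depends only on $x$ and $g$, not on the line bundle; so for a fixed $x$ the relevant index set $I(g,x)=\{i : X_i\cap\overline{T.(g.x)}\neq\emptyset\}$ is the same for $\Lb$, for $\Mb$, and for $\Mb+d\Lb$. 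Moreover $\chi_i$ is a homomorphism from $\pic^T(X)$ to $X^*(T)$, so $\chi_i(\Mb+d\Lb)=\chi_i(\Mb)+d\,\chi_i(\Lb)$.

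First I would reformulate the desired inclusion as a statement about these finite weight configurations. For a fixed $x$ and $g$, write $P_{\Nb}(g,x)=\conv(\{\chi_i(\Nb) : i\in I(g,x)\})$, a polytope in $\R^N$. Then $x\in X^{ss}(\Nb)$ iff $0\in P_{\Nb}(g,x)$ for all $g$, and we want: for $d$ large, $0\in P_{\Mb+d\Lb}(g,x)\ \forall g$ implies $0\in P_{\Lb}(g,x)\ \forall g$. Dividing the vertices of $P_{\Mb+d\Lb}(g,x)$ by $d$, we get $\frac1d\chi_i(\Mb)+\chi_i(\Lb)$, which converge to $\chi_i(\Lb)$ uniformly (over the finitely many $i$) as $d\to\infty$; hence $\frac1d P_{\Mb+d\Lb}(g,x)$ converges to $P_{\Lb}(g,x)$, and $0\in P_{\Mb+d\Lb}$ iff $0\in\frac1d P_{\Mb+d\Lb}$. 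This strongly suggests the inclusion holds asymptotically, but a naive limiting argument only gives $0\in P_{\Lb}(g,x)$ or $0$ on its boundary, and the quantifier over $g\in G$ (a non-compact continuum) must be handled.

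To make this rigorous and uniform I would use weak stability. The hypothesis $g_{d\alpha,d\beta,d\gamma}=1$ for all $d$ means $\dim\h^0(X,d\Lb)^G=1$ for all $d\geq 1$; in particular $X^{ss}(\Lb)$ is nonempty and, crucially, $\Lb$ has an invariant section, so $0\in P_{\Lb}(g,x)$ for some $x$ and all $g$. The real content is that weak stability forces $\Lb$ to lie on a distinguished ray in $\pic^G(X)$ relative to the GIT-fan: the condition $g_{d\alpha,d\beta,d\gamma}=1$ for all $d$ says the dimension does not grow, which is the characterisation of $\Lb$ generating a ray on which the semi-stable locus is as large as possible among the relevant chamber. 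Concretely, I expect weak stability to imply that $\Lb$ is a \emph{limit} of the bundles $\Mb+d\Lb$ in the appropriate closed cone, so that $X^{ss}(\Lb)$ contains the $X^{ss}$ of all bundles in some half-space determined by $\Lb$; equivalently, the finitely many linear functionals (coming from the $\chi_i$) that can certify instability with respect to $\Lb$ also certify instability for $\Mb+d\Lb$ once $d$ is large. Since there are only finitely many components $X_i$, only finitely many subsets $I$ arise as some $I(g,x)$, so only finitely many polytopes $P_{\Lb}^{(I)}=\conv(\{\chi_i(\Lb):i\in I\})$ and $P_{\Mb}^{(I)}$ occur; this finiteness removes the quantifier over $g$ and lets us choose a single $D$ working uniformly.

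The main obstacle, and where the weak stability hypothesis must do its essential work, is ruling out the boundary case: the limiting argument alone permits $0$ to sit on $\partial P_{\Lb}(g,x)$ while $0\in\mathrm{int}\,P_{\Mb+d\Lb}(g,x)$, which would break the inclusion. I would resolve this by showing that for each of the finitely many subsets $I$, if $0\notin P_{\Lb}^{(I)}$ then there is a linear functional (a one-parameter subgroup $\tau$) strictly separating $0$ from $P_{\Lb}^{(I)}$, i.e. $\langle\chi_i(\Lb),\tau\rangle$ is bounded below by a positive constant for all $i\in I$; then for $d$ exceeding a threshold depending only on $\max_i\langle\chi_i(\Mb),\tau\rangle$ and this positive constant, we get $\langle\chi_i(\Mb+d\Lb),\tau\rangle>0$ for all $i\in I$, so $0\notin P_{\Mb+d\Lb}^{(I)}$. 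Taking $D$ to be the maximum of these finitely many thresholds yields: whenever $x\notin X^{ss}(\Lb)$, some $g$ produces an $I$ with $0\notin P_{\Lb}^{(I)}$, hence $0\notin P_{\Mb+d\Lb}^{(I)}$ for $d\geq D$, so $x\notin X^{ss}(\Mb+d\Lb)$. This is exactly the contrapositive of the claimed inclusion $X^{ss}(\Mb+d\Lb)\subset X^{ss}(\Lb)$, and the finiteness of the component set $\{X_1,\dots,X_s\}$ is what guarantees a uniform $D$.
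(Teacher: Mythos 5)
Your operative argument (the third paragraph) is exactly the paper's own proof: the paper defines the index sets $E_{x,g}=\{i : X_i\cap\overline{T.(g.x)}\neq\emptyset\}$, observes that only finitely many such subsets can certify instability for $\Lb$, picks for each one a Hahn--Banach functional $\varphi_E$ with $\varphi_E(\chi_i(\Lb))>0$ on that set, obtains a threshold $D_E$ from $\varphi_E(\chi_i(\Mb+d\Lb))=\varphi_E(\chi_i(\Mb))+d\,\varphi_E(\chi_i(\Lb))$, and takes $D=\max_E D_E$, concluding by the same contrapositive you state. One remark: your assertion that weak stability ``must do its essential work'' here is a red herring --- neither the paper's proof nor your own separation argument ever uses that hypothesis (it is needed only later, via Luna's theorem, to show $X^{ss}(\Lb)\sslash G$ is a point), and likewise the boundary case $0\in\partial\,\conv(\{\chi_i(\Lb):i\in I\})$ you worry about is harmless since it already certifies semi-stability for $\Lb$; but since your actual argument invokes neither, its correctness is unaffected.
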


\begin{proof}
To all $x\in X$ and $g\in G$, we associate $E_{x,g}\in\mathcal{P}(\llbracket 1,s\rrbracket)$ (i.e. a subset of $\llbracket 1,s\rrbracket$) as follows:
\[ E_{x,g}=\{i\in\llbracket 1,s\rrbracket\text{ s.t. }X_i\cap\overline{T.(g.x)}\neq\emptyset\}. \]
With this notation, we know that:
\[ x\in X^{ss}(\Lb)\Longleftrightarrow\forall g\in G, \: 0\in\conv(\{\chi_i(\Lb)\;;\;i\in E_{x,g}\}). \]
So we set $A=\left\lbrace E_{x,g}\text{ s.t. }0\notin\conv(\{\chi_i(\Lb)\;;\;i\in E_{x,g}\})\right\rbrace$, which is finite since contained in $\mathcal{P}(\llbracket 1,s\rrbracket)$. Then, for all $E\in A$, there exists $\varphi_E\in(\R^N)^*$ such that, for all $i\in E$, $\varphi_E(\chi_i(\Lb))>0$ (by Hahn-Banach Theorem). Moreover\footnote{the applications $\varphi_E\circ\chi_i:\pic^G(X)\rightarrow\R$ can be extended without problem to $\pic^G(X)\otimes_\Z\Q$},
\[ \forall E\in A, \; \forall i\in E, \: \varphi_E\circ\chi_i\left(\frac{\Mb+d\Lb}{d}\right)\xrightarrow[d\to +\infty]{}\varphi_E(\chi_i(\Lb))>0, \]
so there exists $D_E\in\N^*$ such that, for all $d\geq D_E$, for all $i\in E$, $\varphi_E\circ\chi_i\left(\dps\frac{\Mb+d\Lb}{d}\right)>0$.

\vspace{5mm}

We then set $D=\max\{D_E\;;\;E\in A\}$. Let $d\in\N$, $d\geq D$. Let $x\notin X^{ss}(\Lb)$, which means that there exists $g\in G$ such that $0\notin\conv(\{\chi_i(\Lb)\;;\;i\in E_{x,g}\})$. In other words, $E_{x,g}\in A$. So, as $d\geq D\geq D_{E_{x,g}}$, $\varphi_{E_{x,g}}(\chi_i(\Mb+d\Lb))=d\varphi_{E_{x,g}}\circ\chi_i\left(\dps\frac{\Mb+d\Lb}{d}\right)>0$ for all $i\in E_{x,g}$. Hence (once again by Hahn-Banach Theorem),
\[ 0\notin\conv\left(\left\lbrace\chi_i\left(\Mb+d\Lb\right)\;;\;i\in E_{x,g}\right\rbrace\right), \text{ i.e. }x\notin X^{ss}(\Mb+d\Lb). \]
Thus, $X^{ss}(\Mb+d\Lb)\subset X^{ss}(\Lb)$.
\end{proof}

\subsection[Use of Luna's Slice Etale Theorem]{Use of Luna's Slice \'{E}tale Theorem}

Let us recall that we considered a triple of partitions $(\alpha,\beta,\gamma)$ such that, for all $d\in\N^*$, $g_{d\alpha,d\beta,d\gamma}=1$. This means that,
\[ \forall d\in\N^*, \: \h^0(X,\Lb^{\otimes d})^G\simeq\C. \]
Then, using Proposition 8.1 of \cite{dolgachev}, as $X$ is projective,
\[ X^{ss}(\Lb)\sslash G\simeq\proj(\C[t]). \]
So $X^{ss}(\Lb)\sslash G$ is a point. Thus $X^{ss}(\Lb)$ contains exactly one closed $G$-orbit, denoted by $G.x_0$. Moreover, $X^{ss}(\Lb)$ is affine (since the canonical projection $X^{ss}(\Lb)\rightarrow X^{ss}(\Lb)\sslash G$ is affine). So we can use Corollary 2 to Luna's Slice \'{E}tale Theorem (cf. \cite{luna}): there exist a reductive subgroup $H$ -which is in fact the isotropy subgroup $G_{x_0}$- of $G$ and an affine $H$-variety $S$ such that
\[ \left\lbrace\begin{array}{l}
S^H=\{x_0\}\\
\forall x\in S, \; x_0\in\overline{H.x}\\
X^{ss}(\Lb)\simeq G\times_H S
\end{array}\right.. \]
Furthermore, $S$ is a (complex) vector space of finite dimension on which $H$ acts linearly.

\subsection[Proof of Theorem 1.2]{Proof of Theorem \ref{big_theo}}\label{proof_big_theo}

We are now ready to prove Theorem \ref{big_theo}. We still have our weakly stable triple $(\alpha,\beta,\gamma)$ and another triple of partitions $(\lambda,\mu,\nu)$, which give rise to the two (semi-ample) line bundles $\Lb$ and $\Mb$.

\begin{prop}\label{inclusion_of_ss_points_suffices}
If $D\in\N$ is such that, for all $d\geq D$, $X^{ss}(\Mb+d\Lb)\subset X^{ss}(\Lb)$, then
\[ \forall d\geq D, \: \h^0(X,\Mb+d\Lb)^G\simeq\h^0(S,\Mb)^H. \]
\end{prop}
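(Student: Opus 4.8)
The plan is to compute $\h^0(X,\Mb+d\Lb)^G$ by transporting everything onto the affine slice $S$, in three movements: replace $X$ by a semi-stable locus via Teleman's theorem (Proposition \ref{prop_teleman}); use weak stability to show that $\Lb$ becomes trivial on $X^{ss}(\Lb)$, so that the parameter $d$ disappears; and finally descend along the isomorphism $X^{ss}(\Lb)\simeq G\times_H S$ obtained above. The outcome $\h^0(S,\Mb)^H$ will visibly not depend on $d$, which is exactly the desired stabilisation.

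First I would apply Proposition \ref{prop_teleman} to the semi-ample line bundle $\Mb+d\Lb$, which gives $\h^0(X,\Mb+d\Lb)^G\simeq\h^0(X^{ss}(\Mb+d\Lb),\Mb+d\Lb)^G$, the isomorphism being restriction of sections. The key preliminary point, and the place where weak stability is really used, is that $\Lb$ is trivial with trivial linearisation on $X^{ss}(\Lb)$. Indeed, weak stability gives $\h^0(X,\Lb^{\otimes k})^G\simeq\C$ for every $k\geq 1$, so by Proposition \ref{prop_teleman} the space $\h^0(X^{ss}(\Lb),\Lb)^G$ is one-dimensional; let $\sigma$ generate it. As $X$ (a product of flag varieties) is irreducible, $\sigma^d$ is a nonzero, hence generating, invariant section of $\Lb^{\otimes d}$ on $X^{ss}(\Lb)$. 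For any $x\in X^{ss}(\Lb)$ there is by definition some invariant section of a power of $\Lb$ not vanishing at $x$; being a multiple of a power of $\sigma$, it forces $\sigma(x)\neq 0$. Thus $\sigma$ is a nowhere-vanishing invariant section, trivialising $\Lb$ together with its linearisation on $X^{ss}(\Lb)$.

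Consequently, multiplication by $\sigma^d$ is a $G$-equivariant isomorphism $\Mb|_{X^{ss}(\Lb)}\xrightarrow{\ \sim\ }(\Mb+d\Lb)|_{X^{ss}(\Lb)}$, whence $\h^0(X^{ss}(\Lb),\Mb+d\Lb)^G\simeq\h^0(X^{ss}(\Lb),\Mb)^G$. On the other hand, the isomorphism $X^{ss}(\Lb)\simeq G\times_H S$ yields the standard descent for associated bundles, namely $\h^0(X^{ss}(\Lb),\Mb)^G\simeq\h^0(S,\Mb)^H$: a $G$-invariant section is determined by its restriction along $\{e\}\times_H S\simeq S$, which is precisely an $H$-invariant section of $\Mb|_S$. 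The right-hand side is manifestly independent of $d$.

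It then remains to compare the two semi-stable loci, i.e.\ to show that the restriction $\h^0(X^{ss}(\Lb),\Mb+d\Lb)^G\to\h^0(X^{ss}(\Mb+d\Lb),\Mb+d\Lb)^G$ is an isomorphism for $d\geq D$; this is where the hypothesis $X^{ss}(\Mb+d\Lb)\subset X^{ss}(\Lb)$ enters, and I expect it to be the main point to argue carefully. Surjectivity comes for free from Teleman: every invariant section on $X^{ss}(\Mb+d\Lb)$ is the restriction of a global invariant section on $X$, which a fortiori restricts to the larger open set $X^{ss}(\Lb)$. Concretely, the composite $\h^0(X,\Mb+d\Lb)^G\to\h^0(X^{ss}(\Lb),\Mb+d\Lb)^G\to\h^0(X^{ss}(\Mb+d\Lb),\Mb+d\Lb)^G$ is the Teleman isomorphism, so the second arrow is onto; and it is injective because $X^{ss}(\Lb)$ is irreducible, so a section vanishing on the nonempty open subset $X^{ss}(\Mb+d\Lb)$ vanishes identically. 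Chaining the displayed isomorphisms gives $\h^0(X,\Mb+d\Lb)^G\simeq\h^0(S,\Mb)^H$. The one situation needing a separate word is the degenerate case $X^{ss}(\Mb+d\Lb)=\emptyset$, where the density argument for injectivity breaks down; there both the Kronecker coefficient and the left-hand side vanish, and I would dispose of it by checking directly that the $d$-independent space $\h^0(S,\Mb)^H$ is then forced to be zero as well.
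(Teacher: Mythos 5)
Your argument is, in substance, the paper's own proof: Teleman's isomorphism (Proposition \ref{prop_teleman}) applied to $\Mb+d\Lb$, the hypothesis $X^{ss}(\Mb+d\Lb)\subset X^{ss}(\Lb)$ to transfer invariant sections to $X^{ss}(\Lb)$, Luna's isomorphism $X^{ss}(\Lb)\simeq G\times_H S$ to descend to $S$, and weak stability to kill the dependence on $d$. Your two deviations are harmless and, if anything, improvements: you trivialise $\Lb$ together with its linearisation on all of $X^{ss}(\Lb)$ by producing a nowhere-vanishing invariant section $\sigma$ (the paper instead trivialises $\Lb$ on $S$, via a lemma identifying $\pic^H(S)$ with $X^*(H)$ and the same one-dimensionality argument applied at the single point $x_0$); and you spell out why restriction $\h^0(X^{ss}(\Lb),\Mb+d\Lb)^G\to\h^0(X^{ss}(\Mb+d\Lb),\Mb+d\Lb)^G$ is bijective (surjective because composing with restriction from $X$ gives Teleman's isomorphism, injective by irreducibility), a point the paper disposes of with a single ``consequently''. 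Whenever $X^{ss}(\Mb+d\Lb)\neq\emptyset$, your proof is complete and correct.

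The genuine gap is the degenerate case you flag at the end and leave as a promissory note. The claim that $\h^0(S,\Mb)^H=0$ whenever some $X^{ss}(\Mb+d\Lb)$ with $d\geq D$ is empty is not a routine verification: it cannot be proved by the formal arguments used everywhere else, because in the generality in which both your proof and the paper's operate (irreducible projective $X$, reductive $G$, semi-ample $G$-linearised $\Lb,\Mb$ with $\dim\h^0(X,\Lb^{\otimes k})^G=1$ for all $k$) it is false. For instance, let $X=\pp^1$, $G=\C^*$ acting so that the sections $x,y$ of $\oo(1)$ have weights $-1,0$; take $\Lb=\oo(2)$, so that $\h^0(X,\Lb^{\otimes k})^G=\C y^{2k}$ for all $k$, $\sigma=y^2$, $X^{ss}(\Lb)=\{y\neq0\}$, $H=G$, $S\simeq\C$; and take $\Mb=\oo(1)$ with its linearisation twisted by the character $t\mapsto t^2$. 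Then the invariant sections of $(\Mb+d\Lb)^{\otimes k}$ are spanned by $x^{2k}y^{k(2d-1)}$ for $d\geq1$ and vanish for $d=0$, so $X^{ss}(\Mb+d\Lb)\subset X^{ss}(\Lb)$ holds for every $d\geq0$ (the locus being empty at $d=0$), yet $\h^0(S,\Mb)^H=\h^0(X^{ss}(\Lb),\Mb)^G=\C\cdot x^2y^{-1}\neq 0=\h^0(X,\Mb)^G$: the conclusion fails at $d=0$. What rescues the statement is extra geometric input on the unstable locus, absent from your argument: note that weak stability gives $X^{us}(\Lb)=\{\sigma=0\}$, and if $\mathrm{div}(\sigma)$ is \emph{reduced} (it is not in the example above), then a nonzero $\zeta\in\h^0(X^{ss}(\Lb),\Mb)^G$ with maximal pole order $\rho$ along the components of $X^{us}(\Lb)$ would satisfy $\rho>d_*\geq D$ (by emptiness at $d_*$), and $\zeta\otimes\sigma^{\otimes\rho}$ would be a global invariant section of $\Mb+\rho\Lb$ not vanishing identically on some unstable divisor, contradicting the hypothesis at $d=\rho$; alternatively one can simply restrict the conclusion to those $d$ with $X^{ss}(\Mb+d\Lb)\neq\emptyset$, which is all that Theorem \ref{big_theo} needs. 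To be fair, the paper's proof has exactly the same unacknowledged hole --- its passage from $X^{ss}(\Mb+d\Lb)$ to $X^{ss}(\Lb)$ also uses density, hence non-emptiness --- so your proposal is no less complete than the published argument; but since you explicitly deferred the one step that genuinely requires a new idea, it must be recorded as a gap.
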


\begin{proof}
Let $D\in\N$ be as in the statement, and $d\in\N$, $d\geq D$. Then, thanks to Proposition \ref{prop_teleman},
\[ \h^0(X,\Mb+d\Lb)^G\simeq\h^0(X^{ss}(\Mb+d\Lb),\Mb+d\Lb)^G. \]
Consequently, since $X^{ss}(\Mb+d\Lb)\subset X^{ss}(\Lb)\subset X$,
\[ \h^0(X,\Mb+d\Lb)^G\simeq\h^0(X^{ss}(\Lb),\Mb+d\Lb)^G. \]
Now, using the consequence of Luna's Slice \'{E}tale Theorem:
\[ \h^0(X,\Mb+d\Lb)^G\simeq\h^0(G\times_H S,\Mb+d\Lb)^G\simeq\h^0(S,\Mb+d\Lb)^H. \]
We are almost done; it only remains to prove that $\h^0(S,\Mb+d\Lb)^H$ does not depend on $d$. For this, we demonstrate that $\Lb$ is trivial on $S$, using the following lemma:

\begin{lemma}
The application
\[ \begin{array}{rccl}
\psi: & X^*(H) & \longrightarrow & \pic^H(S)\\
 & \chi & \longmapsto & \Lb_\chi
\end{array}, \]
where $\Lb_\chi$ is the trivial bundle $S\times\C$ whose $H$-linearisation is given by the character $\chi$, is an isomorphism.
\end{lemma}

\begin{proof}
The only non trivial thing to prove is the surjectivity of $\psi$. Let $\Nb\in\pic^H(S)$. We have seen that $x_0$ is a point of $S$ fixed by $H$. So, $H$ acts on the fibre $\Nb_{x_0}$. This action gives $\chi\in X^*(H)$. Moreover, $\Nb$ is trivial because $S$ is a vector space. Necessarily, its linearisation is given by the character $\chi$.
\end{proof}

We consider the character $\chi_0$ given by the action of $H$ on $\Lb_{x_0}$ and we want to prove that $\chi_0$ is trivial. As $x_0\in X^{ss}(\Lb)$, there exist $k\in\N^*$ and $\sigma\in\h^0(X,\Lb^{\otimes k})^G$ such that $\sigma(x_0)\neq 0$. Moreover, $\dim(\h^0(X,\Lb)^G)=\dim(\h^0(X,\Lb^{\otimes k})^G)=1$ so, if we take $\sigma_0\in\h^0(X,\Lb)^G\setminus\{0\}$, we have $\sigma_0^{\otimes k}=t\sigma$ with $t\in\C^*$. As a consequence, $\sigma_0^{\otimes k}(x_0)\neq 0$ and so $\sigma_0(x_0)\neq 0$.\\
Furthermore,
\[ \forall h\in H, \: \sigma_0(x_0)=\sigma_0(h.x_0)=h.\sigma_0(x_0)=\chi_0(h)\sigma_0(x_0), \]
and then $\chi_0(h)=1$ for all $h\in H$. Thus, $\chi_0$ is trivial and so is $\Lb	$ over $S$.

\vspace{5mm}

Finally,
\[ \forall d\geq D, \: \h^0(X,\Mb+d\Lb)^G\simeq\h^0(S,\Mb)^H, \]
\end{proof}

\noindent Proposition \ref{prop_semi-stable} and Proposition \ref{inclusion_of_ss_points_suffices} together conclude the proof of Proposition \ref{prop_intro}, and as a consequence our proof of Theorem \ref{big_theo}. Let us note that the formula we get for the limit coefficient (i.e. $\dim\h^0(S,\Mb)^H$) corresponds to the one obtained by Paradan in \cite{paradan}, Theorem 5.12.

\section{Explicit bounds in some specific cases}\label{section_explicit_bounds}

We saw in the previous section that the sequence $(g_{\lambda+d\alpha,\mu+d\beta,\nu+d\gamma})_{d\in\N}$ stabilises as soon as $X^{ss}(\Mb+d\Lb)\subset X^{ss}(\Lb)$. We now would like to see if one can compute the rank $D$ from which this inclusion is realised. The computation of the $D$ from Proposition \ref{prop_semi-stable} appears to be too tricky, and so in the following we focus on two examples in which we can do explicit computations using another method.

\subsection{Steps of the computation}\label{steps_computation}

The inclusion $X^{ss}(\Mb+d\Lb)\subset X^{ss}(\Lb)$ we are interested in is equivalent to the following: $X^{us}(\Lb)\subset X^{us}(\Mb+d\Lb)$. Here we are rather looking to prove this last one, principally because we find that the fact of being an unstable point has -thanks to the Hilbert-Mumford criterion- a more practical description. Here are the different steps we are then going to carry out on the two examples:
\begin{itemize}
\item The first step is to consider the projection $\pi:X\rightarrow\overline{X}$ onto the product of partial flag varieties such that $\Lb$ is the pull-back of an ample line bundle $\overline{\Lb}$ over $\overline{X}$.
\item The second step is to study the set $\overline{X}^{us}(\overline{\Lb})$ of unstable points in $\overline{X}$. More precisely, we want to express this set as the union of some orbit closures: $\cl(G.\overline{x_1}),\dots,$ $\cl(G.\overline{x_p})$.
\item Then one can prove that, thanks to good properties of the projection $\pi$, $X^{us}(\Lb)$ is the union of the closures of $\pi^{-1}(G.\overline{x_1}),\dots,\pi^{-1}(G.\overline{x_p})$. As a consequence, since $X^{us}(\Mb+d\Lb)$ is closed and $\pi$ is $G$-equivariant, to prove that $X^{us}(\Lb)\subset X^{us}(\Mb+d\Lb)$ we only need to show for all $i\in\llbracket 1,p\rrbracket$ that $\pi^{-1}(\overline{x_i})\subset X^{us}(\Mb+d\Lb)$.
\item In the fourth step we want to use the Hilbert-Mumford criterion. Let us write it in a way different from before:

\begin{de}
Let $Y$ be a projective variety on which a reductive group $H$ acts, and $\Nb$ a $H$-linearised line bundle over $Y$. Let $y\in Y$ and $\tau$ be a one-parameter subgroup of $H$ (denoted $\tau\in X_*(H)$). Since $Y$ is projective, $\lim\limits_{t\to 0}\tau(t).y$ exists. We denote it by $z$. This point is fixed by the image of $\tau$, and so $\C^*$ acts via $\tau$ on the fibre $\Nb_z$. Then there exists an integer $\mu^\Nb(y,\tau)$ such that, for all $t\in\C^*$ and $\tilde{z}\in\Nb_z$,
\[ \tau(t).\tilde{z}=t^{-\mu^\Nb(y,\tau)}\tilde{z}. \]
\end{de}

The Hilbert-Mumford criterion can then be stated as (see e.g. \cite{ressayre}, Lemma 2):

\begin{prop}
In the settings of the previous definition, if in addition $\Nb$ is semi-ample, then:
\[ y\in Y^{ss}(\Nb)\quad\Longleftrightarrow\quad\forall\tau\in X_*(H), \; \mu^\Nb(y,\tau)\leq 0. \]
\end{prop}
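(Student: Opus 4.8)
The plan is to derive this numerical form of the Hilbert–Mumford criterion from the weight–polytope form already established in Section \ref{ss_points}, by computing the integer $\mu^\Nb(y,\tau)$ explicitly. The first reduction is to the case where $\Nb$ is ample. As in the proof of Proposition \ref{prop_teleman}, since $\Nb$ is semi-ample there is an $H$-equivariant morphism $\pi:Y\to\overline{Y}$ with $\Nb=\pi^*\overline{\Nb}$ for an ample bundle $\overline{\Nb}$ on $\overline{Y}$, and $Y^{ss}(\Nb)=\pi^{-1}(\overline{Y}^{ss}(\overline{\Nb}))$. Because $\pi$ is continuous and equivariant, $\pi(\lim_{t\to0}\tau(t).y)=\lim_{t\to0}\tau(t).\pi(y)$, and the $\C^*$-action on the fibre $\Nb_z=\overline{\Nb}_{\pi(z)}$ over the limit point is the pulled-back one; hence $\mu^\Nb(y,\tau)=\mu^{\overline{\Nb}}(\pi(y),\tau)$ for every $\tau$. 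Thus both sides of the desired equivalence for $(y,\Nb)$ reduce to the corresponding statements for $(\pi(y),\overline{\Nb})$, and I may assume $\Nb$ ample.

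Next I would pass from the reductive group $H$ to a maximal torus $T$. Every one-parameter subgroup of $H$ is conjugate into $T$, and from the linearisation one checks the equivariance $\mu^\Nb(h.y,h\tau h^{-1})=\mu^\Nb(y,\tau)$; writing $\tau=h\tau'h^{-1}$ with $\tau'\in X_*(T)$ this yields $\mu^\Nb(y,\tau)=\mu^\Nb(h^{-1}.y,\tau')$. Consequently the condition ``$\forall\tau\in X_*(H),\ \mu^\Nb(y,\tau)\le0$'' is equivalent to ``$\forall g\in H,\ \forall\tau'\in X_*(T),\ \mu^\Nb(g.y,\tau')\le0$''. On the other hand, Theorem 9.3 of \cite{dolgachev} (already invoked above) gives $y\in Y^{ss}(\Nb)$ if and only if $g.y$ is $T$-semistable for all $g\in H$. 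So it suffices to settle the torus case: for $T$ acting, $y$ is $T$-semistable if and only if $\mu^\Nb(y,\tau)\le0$ for all $\tau\in X_*(T)$.

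For the torus case I would use the embedding $Y\hookrightarrow\pp(V)$ with $V=\bigoplus_\chi V_\chi$ and a power of $\Nb$ equal to the restriction of $\oo(1)$, exactly as in Section \ref{ss_points}. Writing $y=\spa\!\left(\sum_\chi v_\chi\right)$, the same computation $\tau(t).y=\spa\!\left(\sum_\chi t^{\langle\chi,\tau\rangle}v_\chi\right)$ used in the proof of the vertex lemma shows that $z=\lim_{t\to0}\tau(t).y$ lies over the eigenspaces realising $m:=\min_{\chi\in\wt(y)}\langle\chi,\tau\rangle$, so $z$ lies in a component of $Y^T$ with associated weight $\chi_i(\Nb)=\chi$. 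Tracking the character through the paper's definition of $\chi_i(\Nb)$ as the inverse character, the $\C^*$-action on $\Nb_z$ has weight $-m$, whence $\mu^\Nb(y,\tau)=\min_{\chi\in\wt(y)}\langle\chi,\tau\rangle$. Therefore ``$\mu^\Nb(y,\tau)\le0$ for all $\tau$'' means there is no $\tau\in X_*(T)$ with $\langle\chi,\tau\rangle>0$ for every $\chi\in\wt(y)$; since $\wt(y)$ is a finite set of lattice points, a separating-hyperplane argument shows this is precisely the condition $0\in\conv(\wt(y))$, which by Theorem 9.2 of \cite{dolgachev} characterises $T$-semistability.

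I expect the main obstacle to be the careful sign bookkeeping in the torus step — correctly identifying the weight of the $\C^*$-action on the fibre $\Nb_z$ over the limit point through the convention $\chi_i(\Nb)=$ inverse character, so that the inequality emerges as $\le0$ rather than $\ge0$. A secondary point requiring attention is that the functional furnished by the separating-hyperplane argument must be realised by an honest integral one-parameter subgroup; this is guaranteed by the rationality of the weights, which lets one choose a rational separating functional and clear denominators to obtain an element of $X_*(T)$.
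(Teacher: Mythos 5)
Your proof is correct, but note that the paper itself does not prove this proposition: it is simply quoted from \cite{ressayre}, Lemma 2 (as was the corresponding remark in Section \ref{ss_points} that the criterion survives for semi-ample bundles under the paper's non-standard definition of semi-stability). What you have done is make this point self-contained, by deriving the $\mu$-form of the Hilbert--Mumford criterion from exactly the ingredients the paper used in Section \ref{ss_points} to establish its polytope form: the equivariant fibration $\pi:Y\to\overline{Y}$ with $\Nb=\pi^*\overline{\Nb}$ reducing the semi-ample case to the ample one (using $Y^{ss}(\Nb)=\pi^{-1}(\overline{Y}^{ss}(\overline{\Nb}))$ and $\mu^\Nb(y,\tau)=\mu^{\overline{\Nb}}(\pi(y),\tau)$), Theorem 9.3 of \cite{dolgachev} together with the conjugation equivariance $\mu^\Nb(h.y,h\tau h^{-1})=\mu^\Nb(y,\tau)$ reducing the reductive group to a maximal torus, and Theorem 9.2 of \cite{dolgachev} plus a separating-hyperplane argument in the torus case. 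Your sign bookkeeping is right: with the paper's convention $\tau(t).\tilde z=t^{-\mu^\Nb(y,\tau)}\tilde z$ one gets $\mu^{\oo(1)}(y,\tau)=\min_{\chi\in\wt(y)}\langle\chi,\tau\rangle$, so the criterion does come out as $\leq 0$ (the opposite of Mumford's classical convention). The citation is of course shorter and covers the general situation at once; your derivation buys self-containedness and makes transparent why the paper's definition of semi-stability is precisely what lets the criterion extend to semi-ample bundles.

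Two phrasings should be tightened, though neither damages the argument. First, the limit $z=\lim_{t\to 0}\tau(t).y$ is fixed by the image of $\tau$, not by $T$: when several $\chi$ realise the minimum $m$, the point $z$ does not lie in a component of $Y^T$, so the formalism of the characters $\chi_i$ does not literally apply to it. What saves you is that all $\chi\in\wt(y)$ realising $m$ pair with $\tau$ to the same value, so the line $\C\sum_{\langle\chi,\tau\rangle=m}v_\chi$ is a $\tau$-eigenline of weight $m$; hence the $\tau$-weight on $\oo(-1)_z$ is $m$, on $\oo(1)_z$ is $-m$, and this is all that the definition of $\mu^\Nb(y,\tau)$ requires. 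Second, only some positive power $\Nb^{\otimes k}$ is the restriction of $\oo(1)$; this is harmless since $\mu^{\Nb^{\otimes k}}(y,\tau)=k\,\mu^{\Nb}(y,\tau)$ and $Y^{ss}(\Nb)=Y^{ss}(\Nb^{\otimes k})$ by definition, but it should be said.
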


Set $i\in\llbracket 1,p\rrbracket$. Since $\overline{x_i}\in\overline{X}^{us}(\overline{\Lb})$, we can find a destabilising one-parameter subgroup for $\overline{x_i}$: $\tau_i$ such that $\mu^{\overline{\Lb}}(\overline{x_i},\tau_i)>0$.
\item Let us keep in mind that we want to get $\pi^{-1}(\overline{x_i})\subset X^{us}(\Mb+d\Lb)$. By Hilbert-Mumford criterion, this will be true when, for all $x\in\pi^{-1}(\overline{x_i})$, $\mu^{\Mb+d\Lb}(x,\tau_i)>0$. But, for such an $x$, we have:
\[ \mu^{\Mb+d\Lb}(x,\tau_i)=\mu^{\Mb}(x,\tau_i)+d\mu^{\overline{\Lb}}(\overline{x_i},\tau_i). \]
So we only need to calculate $\mu^{\Mb}(x,\tau_i)$ for all $x\in\pi^{-1}(\overline{x_i})$:
\begin{itemize}
\item From the definition of the integers $\mu^{\Mb}(.,\tau_i)$, we see that we can restrict to the case when $x\in\pi^{-1}(\overline{x_i})$ is a fixed point of $\tau_i$. Then at first we determine the form of such a fixed point.
\item Finally we calculate explicitly the action of $\tau_i$ on the fibre of $\Mb$ over such a point.
\end{itemize}
\item As a conclusion, as soon as
\[ d>-\frac{\mu^{\Mb}(x,\tau_i)}{\mu^{\overline{\Lb}}(\overline{x_i},\tau_i)} \]
for all $i\in\llbracket 1,p\rrbracket$ and $x\in\pi^{-1}(\overline{x_i})^{\tau_i}$, we have the inclusion we were looking for.
\end{itemize}

\subsection{Case of Murnaghan's stability}\label{example_murnaghan}

\subsubsection{Reduction to ample line bundles}

In this case, the stable triple we are interested in is simply $\big((1),(1),(1)\big)$. It has been known for a long time that it is a stable triple. Consider
\vspace{-5mm}
\begin{changemargin}{-2mm}{-2mm}
\[ \begin{array}{rccl}
\pi: & X & \longrightarrow & \dps\overset{\text{denoted }\overline{X}}{\overbrace{\pp(V_1)\times\pp(V_2)\times\pp((V_1\otimes V_2)^*)}}\\
 & \left((W_{1,i})_i,(W_{2,i})_i,(W'_i)_i\right) & \longmapsto & (W_{1,1},W_{2,1},\{\varphi\in(V_1\otimes V_2)^*\text{ s.t. }\ker\varphi=W'_{n_1n_2-1}\})
\end{array}. \]
\end{changemargin}
Since $\alpha=\beta=\gamma=(1)$, we have that $\Lb=\Lb_\alpha\otimes\Lb_\beta\otimes\Lb^*_\gamma$ is the pull-back of $\oo(1)\otimes\oo(1)\otimes\oo(1)$ (denoted $\overline{\Lb}$ from now on) by $\pi$. Moreover,
\[ \mathrm{H}^0(\overline{X},\overline{\Lb})^G\simeq (V_1^*\otimes V_2^*\otimes V_1\otimes V_2)^G\simeq\C. \]
So $\overline{X}^{ss}(\overline{\Lb})=\{x\in\overline{X}\text{ s.t. }\overline{\sigma}_0(x)\neq 0\}$ for any $\overline{\sigma}_0\in\mathrm{H}^0(\overline{X},\overline{\Lb})^G\setminus\{0\}$. A simple non-zero section on $\overline{X}$ is
\[ \C v_1\otimes\C v_2\otimes\C\varphi\longmapsto\varphi(v_1\otimes v_2). \]
And
\[ \overline{X}^{ss}(\overline{\Lb})=\{(\C v_1,\C v_2,\C\varphi)\in\overline{X}\text{ s.t. }v_1\otimes v_2\notin\ker\varphi\}. \]

\subsubsection{Determination of $\overline{X}^{us}(\overline{\Lb})$}\label{Xus_for_Murnaghan}

Let us take $(e_1,\dots,e_{n_1})$ a basis in $V_1$ (with $n_1\geq 2$), and $(f_1,\dots,f_{n_2})$ a basis in $V_2$ ($n_2\geq 2$). Their dual bases are denoted with upper stars. Moreover, we set $n=\min(n_1,n_2)$.

\begin{prop}\label{orbits_murnaghan}
The set $\overline{X}^{us}(\overline{\Lb})$ consists in the closure of the orbit of the element $\overline{x}=(\C e_1,\C f_2,\C \varphi_n)$, where $\varphi_n=\sum_{i=1}^n e_i^*\otimes f_i^*\in V_1^*\otimes V_2^*\simeq(V_1\otimes V_2)^*$.
\end{prop}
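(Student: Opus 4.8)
The plan is to identify $\overline{X}^{us}(\overline{\Lb})$ explicitly as an incidence variety and then recognise it as a single orbit closure. From the computation of $\overline{X}^{ss}(\overline{\Lb})$ already carried out, taking complements gives
\[ \overline{X}^{us}(\overline{\Lb})=\{(\C v_1,\C v_2,\C\varphi)\in\overline{X}\text{ s.t. }\varphi(v_1\otimes v_2)=0\}. \]
First I would check that $\overline{x}$ belongs to this set: since $\varphi_n=\sum_{i=1}^n e_i^*\otimes f_i^*$, one has $\varphi_n(e_1\otimes f_2)=\sum_{i=1}^n\delta_{i,1}\delta_{i,2}=0$ (the indices $1$ and $2$ never coincide), so indeed $\overline{x}\in\overline{X}^{us}(\overline{\Lb})$. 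Because this set is closed (it is the zero locus of the invariant section $\overline{\sigma}_0$) and $G$-stable, the inclusion $\overline{G.\overline{x}}\subseteq\overline{X}^{us}(\overline{\Lb})$ is immediate, and all the work lies in the reverse inclusion.

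For the reverse inclusion I would first establish that $\overline{X}^{us}(\overline{\Lb})$ is irreducible. Projecting onto $\pp(V_1)\times\pp(V_2)$, the fibre over $(\C v_1,\C v_2)$ is the hyperplane $\{\C\varphi\text{ s.t. }\varphi(v_1\otimes v_2)=0\}$ of $\pp((V_1\otimes V_2)^*)$; hence $\overline{X}^{us}(\overline{\Lb})$ is a projective bundle (with fibre $\pp^{n_1n_2-2}$) over the irreducible base $\pp(V_1)\times\pp(V_2)$, so it is irreducible of dimension $(n_1-1)+(n_2-1)+(n_1n_2-2)$. Since $\overline{G.\overline{x}}$ is an irreducible closed $G$-invariant subset contained in it, it now suffices to prove that $G.\overline{x}$ is dense, for which it is enough to show that $\dim(G.\overline{x})$ equals $\dim\overline{X}^{us}(\overline{\Lb})$, i.e. that the orbit of $\overline{x}$ is top-dimensional.

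The heart of the argument --- and the step I expect to be the main obstacle --- is this density, which I would phrase as transitivity of $G$ on the open stratum
\[ U=\{(\C v_1,\C v_2,\C\varphi)\text{ s.t. }\rk\varphi=n,\;\varphi(v_1\otimes-)\neq 0,\;\varphi(-\otimes v_2)\neq 0,\;\varphi(v_1\otimes v_2)=0\} \]
of $\overline{X}^{us}(\overline{\Lb})$: the defining conditions (maximal rank of the bilinear form $\varphi$, and $v_1,v_2$ not lying in its left and right kernels) are open, so $U$ is open and dense, and $\overline{x}\in U$. I would prove transitivity by reduction to a normal form: viewing $\varphi$ as an $n_1\times n_2$ matrix $A$ in the bases $(e_i),(f_j)$, the group $\gl(V_1)\times\gl(V_2)$ acts by $A\mapsto(g_1^{-1})^{T}A\,g_2^{-1}$, and one uses this to bring $\varphi$ to $\varphi_n$ while simultaneously normalising the lines $\C v_1,\C v_2$ to $\C e_1,\C f_2$. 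The compatibility of these normalisations with the single relation $\varphi(v_1\otimes v_2)=0$ is exactly what must be checked, and it is cleanest to treat the cases $n_1\leq n_2$ and $n_1\geq n_2$ separately. Alternatively, one may avoid full transitivity and simply compute $\stab_G(\overline{x})$ directly, then verify that $\dim G-\dim\stab_G(\overline{x})$ matches the dimension of $\overline{X}^{us}(\overline{\Lb})$ found above; either way the bookkeeping around the maximal-rank normalisation is the only genuinely delicate point.
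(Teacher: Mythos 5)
Your proposal is correct, and it reaches the conclusion by a genuinely different route than the paper. The paper never establishes irreducibility of $\overline{X}^{us}(\overline{\Lb})$ directly: it observes that this set is pure of codimension $1$, classifies \emph{all} $G$-orbits on the maximal-rank locus in three separate cases ($n_1=n_2$, $n_1<n_2$, $n_1>n_2$), shows that the locus $\{\rk\psi\leq n-1\}$ meets $\overline{X}^{us}(\overline{\Lb})$ in codimension at least $2$, and concludes that every irreducible component of $\overline{X}^{us}(\overline{\Lb})$ meets the maximal-rank unstable stratum, identified case by case as a single orbit with representative $\overline{x}$ (in the rectangular case $n_1<n_2$ this also uses the closure relation $\oo_1\subset\cl(\oo_2)$ between the two unstable orbits). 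Your argument replaces both the purity observation and the codimension estimate by the remark that $\overline{X}^{us}(\overline{\Lb})$ fibres over $\pp(V_1)\times\pp(V_2)$ with fibres $\pp^{n_1n_2-2}$, hence is irreducible of known dimension; this reduces the whole proposition to showing that your single, uniformly defined open stratum $U$ (which coincides with the paper's dense unstable orbit in each case) is one orbit, or even just that $\dim(G.\overline{x})$ equals $\dim\overline{X}^{us}(\overline{\Lb})$. Your route buys economy: no classification of the remaining orbits, no closure relations among them, and the stabiliser-dimension variant avoids most of the normal-form bookkeeping; the paper's route buys the finer picture of the whole unstable locus, which is not needed for the statement itself. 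The one step you leave as a sketch, transitivity of $G$ on $U$ by a Gram--Schmidt-type normalisation, is true and is left at exactly the same level of detail in the paper (``one can check that such triples form one orbit''), so it does not constitute a gap.
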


\begin{proof}At first, since $\overline{X}^{us}(\overline{\Lb})=\{(\C v_1,\C v_2,\C\varphi)\in\overline{X}\text{ s.t. }\varphi(v_1\otimes v_2)=0\}$, $\overline{X}^{us}(\overline{\Lb})$ is pure of codimension 1.

\vspace{5mm}

Then $\pp((V_1\otimes V_2)^*)\simeq\pp(V_1^*\otimes V_2^*)\simeq\pp(\mathrm{Hom}(V_1,V_2^*))$. So we consider $(l_1,l_2,\C\psi)\in\pp(V_1)\times\pp(V_2)\times\pp(\mathrm{Hom}(V_1,V_2^*))$. The action of $G$ is then:
\[ \forall(g_1,g_2)\in G, \; (g_1,g_2).(l_1,l_2,\C\psi)=(g_1(l_1),g_2(l_2),\C {}^t\!g_2^{-1}\circ\psi\circ g_1^{-1}). \]
So we know that the orbits of the action on the third part ($\C\psi$) are classified by the rank of $\psi$. Moreover, this triple $(l_1,l_2,\C\psi)$ defines several subspaces:
\[ \begin{array}{c|c|c|c}
\text{in }V_1 & \text{in }V_2 & \text{in }V_1^* & \text{in }V_2^*\\
\hline
l_1 & l_2 & H_1=l_1^\perp & H_2=l_2^\perp\\
\ker\psi & \ker{}^t\psi & \im{}^t\psi=(\ker\psi)^\perp & \im\psi=(\ker{}^t\psi)\perp\\
\psi^{-1}(H_2) & {}^t\psi^{-1}(H_1) & {}^t\psi(l_2)=\psi^{-1}(H_2)^\perp & \psi(l_1)={}^t\psi^{-1}(H_1)^\perp
\end{array} \]
and the different possible positions of $l_1$ and $l_2$ with respect to $\ker\psi$, $\psi^{-1}(H_2)$, and respectively $\ker{}^t\psi$, ${}^t\psi^{-1}(H_1)$, shall help us to describe the orbits. Furthermore, $\ker\psi\subset\psi^{-1}(H_2)$, $\ker{}^t\psi\subset{}^t\psi^{-1}(H_1)$, and $l_1\subset\psi^{-1}(H_2)\Leftrightarrow l_2\subset{}^t\psi^{-1}(H_1)$.

\vspace{5mm}

\noindent\underline{First case:} $n_1=n_2$ (so $n=n_1=n_2$).\\
Let us first assume that $\rk\psi=n$. Then, $\ker\psi=\{0\}$ and $\ker{}^t\psi=\{0\}$. So this leaves two possibilities for the positions of $l_1$ and $l_2$:
\begin{itemize}
\item $l_1\subset\psi^{-1}(H_2)$ and $l_2\subset{}^t\psi^{-1}(H_1)$. One can check that such $(l_1,l_2,\C\psi)$ form one orbit, $\oo_1$.
\item $l_1\not\subset\psi^{-1}(H_2)$ and $l_2\not\subset{}^t\psi^{-1}(H_1)$. One can also check that such triples form a second orbit, $\oo_2$.
\end{itemize}
We can see that $\oo_1$ is unstable, whereas $\oo_2$ is semi-stable.\\
What if $\rk\psi\leq n-1$? The closed subset $Y=\{(l_1,l_2,\C\psi)\text{ s.t. }\rk\psi\leq n-1\}$ satisfies $\codim(Y\cap \overline{X}^{us}(\overline{\Lb}))\geq 2$ because, for all $l_1$ and $l_2$, $\{\C\psi\,;\,\rk\psi\leq n-1\text{ and }\psi(l_1)(l_2)=\{0\}\}$ has codimension 2 in $\pp(\mathrm{Hom}(V_1,V_2^*))$. So the complementary of $Y\cap\overline{X}^{us}(\overline{\Lb})$ intersects every irreducible components of $\overline{X}^{us}(\overline{\Lb})$. Thus, $Y^c=\{(l_1,l_2,\C\psi)\text{ s.t. }\rk\psi=n\}$ intersects every irreducible components of $\overline{X}^{us}(\overline{\Lb})$\\
\textit{Conclusion for this case:} $\overline{X}^{us}(\overline{\Lb})=\cl(\oo_1)$, the closure of orbit $\oo_1$. Furthermore, a representative of $\oo_1$ is $\overline{x}=(\C e_1,\C f_2,\C\varphi_n)$.

\vspace{5mm}

\noindent\underline{Second case:} $n_1<n_2$ (and then $n=n_1$).\\
In this case, $\{\C\psi\text{ s.t. }\rk\psi\leq n-1\}$ has codimension at least 2 (because the minors of rank $n$ must be zero, and there are at least 2). So, as in the previous case, it suffices to consider the case where $\rk\psi=n$, for which $\ker\psi=\{0\}$ and $\ker{}^t\psi\neq\{0\}$. This leads to three possibilities for $l_1$ and $l_2$:
\begin{itemize}
\item $l_1\subset\psi^{-1}(H_2)$ and $l_2\subset\ker{}^t\psi \subset{}^t\psi^{-1}(H_1)$. One can check that such $(l_1,l_2,\C\psi)$ form one orbit, $\oo_1$.
\item $l_1\subset\psi^{-1}(H_2)$ and $l_2\not\subset\ker{}^t\psi$, but $l_2\subset{}^t\psi^{-1}(H_1)$. Once again, one can check that this gives only one orbit, $\oo_2$.
\item $l_1\not\subset\psi^{-1}(H_2)$ and $l_2\not\subset{}^t\psi^{-1}(H_1)$. One can still check that these triples form one orbit, $\oo_3$.
\end{itemize}
The orbit $\oo_3$ is semi-stable, whereas $\oo_1$ and $\oo_2$ are unstable. In addition, $\oo_1\subset\cl(\oo_2)$ because, if $\rk\psi=n$ and $(l_1,l_2,\C\psi)$ is unstable, $(l_1,l_2,\C\psi)\in\oo_1\Leftrightarrow{}^t\psi(l_2)=\{0\}$ and $(l_1,l_2,\C\psi)\in\oo_2\Leftrightarrow{}^t\psi(l_2)\neq\{0\}$.\\
\textit{Conclusion for that case:} Here, $\overline{X}^{us}(\overline{\Lb})=\cl(\oo_2)$ and a representative of $\oo_2$ is the same $\overline{x}$ as before: $\overline{x}=(\C e_1,\C f_2,\C\varphi_n)$.

\vspace{5mm}

\noindent\underline{Third and last case:} $n_1>n_2$.\\
Everything happens similarly to the previous case, if we exchange the roles of $V_1$ and $V_2$. So we have also the orbit of $\overline{x}=(\C e_1,\C f_2,\C\varphi_n)$ which is dense in $\overline{X}^{us}(\overline{\Lb})$.
\end{proof}

\subsubsection[Restriction to π-1(x)]{Restriction to $\pi^{-1}(\overline{x})$}\label{restriction_pi-1(x)}

The projection $\pi$ we use is of the form
\[ \pi:\tilde{G}/\tilde{B}\longrightarrow\tilde{G}/\tilde{P}, \]
with $\tilde{G}$ a complex reductive group, $\tilde{B}$ a Borel subgroup, and $\tilde{P}$ a parabolic subgroup containing $\tilde{B}$. So the fibres are all isomorphic to $\tilde{P}/\tilde{B}$ ($\pi$ is even a fibration). This is also true for its restriction to $X^{us}(\Lb)=\pi^{-1}(\overline{X}^{us}(\overline{\Lb}))$. Thus, since $G.\overline{x}$ is dense in $\overline{X}^{us}(\overline{\Lb})$, $\pi^{-1}(G.\overline{x})$ is dense in $X^{us}(\Lb)$. As a consequence, $X^{us}(\Lb)\subset X^{us}(\Mb+d\Lb)$ if $\pi^{-1}(G.\overline{x})\subset X^{us}(\Mb+d\Lb)$ (because $X^{us}(\Mb+d\Lb)$ is closed). And finally, if $\pi^{-1}(\overline{x})\subset X^{us}(\Mb+d\Lb)$, then $\pi^{-1}(G.\overline{x})\subset X^{us}(\Mb+d\Lb)$ since $\pi$ is $G$-equivariant. Hence the following lemma:

\begin{lemma}\label{x_bar_suffices}
If $d_0\in\N$ is such that, for all $d\geq d_0$, $\pi^{-1}(\overline{x})\subset X^{us}(\Mb+d\Lb)$, then
\[ \forall d\geq d_0, \: g_{\lambda+d(1),\mu+d(1),\nu+d(1)}=g_{\lambda+d_0(1),\mu+d_0(1),\nu+d_0(1)}. \]
\end{lemma}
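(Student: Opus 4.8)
The plan is to reduce the asserted stabilisation of Kronecker coefficients to the inclusion of semi-stable loci already treated in Proposition~\ref{inclusion_of_ss_points_suffices}. Everything hinges on upgrading the hypothesis about the single fibre $\pi^{-1}(\overline{x})$ to the full inclusion $X^{us}(\Lb)\subset X^{us}(\Mb+d\Lb)$, after which complementation and Proposition~\ref{inclusion_of_ss_points_suffices} finish the job.

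First I would fix $d\geq d_0$ and start from the hypothesis $\pi^{-1}(\overline{x})\subset X^{us}(\Mb+d\Lb)$. Since $\pi$ is $G$-equivariant and $X^{us}(\Mb+d\Lb)$ is $G$-stable, saturating by the group action gives
\[ \pi^{-1}(G.\overline{x})=G.\,\pi^{-1}(\overline{x})\subset X^{us}(\Mb+d\Lb). \]
Next I would invoke the density established beforehand: by Proposition~\ref{orbits_murnaghan} the orbit $G.\overline{x}$ is dense in $\overline{X}^{us}(\overline{\Lb})$, and as $\pi$ is a fibration this density is inherited by the preimages, so $\pi^{-1}(G.\overline{x})$ is dense in $X^{us}(\Lb)=\pi^{-1}(\overline{X}^{us}(\overline{\Lb}))$. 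Because $X^{us}(\Mb+d\Lb)$ is closed, taking closures yields
\[ X^{us}(\Lb)=\cl\big(\pi^{-1}(G.\overline{x})\big)\subset X^{us}(\Mb+d\Lb), \]
which upon passing to complements is exactly $X^{ss}(\Mb+d\Lb)\subset X^{ss}(\Lb)$, for every $d\geq d_0$. Applying Proposition~\ref{inclusion_of_ss_points_suffices} with $D=d_0$ then provides, for all $d\geq d_0$, an isomorphism $\h^0(X,\Mb+d\Lb)^G\simeq\h^0(S,\Mb)^H$ whose right-hand side is independent of $d$; since $g_{\lambda+d(1),\mu+d(1),\nu+d(1)}=\dim\h^0(X,\Mb+d\Lb)^G$, the coefficient is constant on $\{d\geq d_0\}$ and in particular equals its value at $d=d_0$.

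I do not expect a genuine obstacle here, as the geometric core has already been carried out in the discussion preceding the statement; the lemma is essentially a repackaging of that discussion together with Proposition~\ref{inclusion_of_ss_points_suffices}. The one point deserving care is the density transfer---that $\pi^{-1}(G.\overline{x})$ is dense in $X^{us}(\Lb)$---which uses that $\pi$ is a fibration, so that forming preimages behaves well with respect to closure, rather than an arbitrary morphism; this is guaranteed by the structure $\pi:\tilde G/\tilde B\to\tilde G/\tilde P$ with $\tilde P\supset\tilde B$ parabolic noted above.
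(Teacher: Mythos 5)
Your proposal is correct and follows essentially the same route as the paper: the $G$-equivariant saturation of the fibre, the transfer of density of $G.\overline{x}$ through the fibration $\pi$ to conclude $X^{us}(\Lb)\subset X^{us}(\Mb+d\Lb)$ by closedness, and then passage to complements followed by Proposition~\ref{inclusion_of_ss_points_suffices} with $D=d_0$. The paper's own proof is exactly this chain of observations (stated in the discussion immediately preceding the lemma), so there is nothing to add.
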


\subsubsection{Computation of the bound}\label{computation_murnaghan}

We identify $\gl(V_1)$, $\gl(V_2)$, and $\gl(V_1\otimes V_2)$ respectively with $\gl_{n_1}(\C)$, $\gl_{n_2}(\C)$, and $\gl_{n_1n_2}(\C)$ thanks to the bases given in section \ref{Xus_for_Murnaghan}. The basis in $V_1\otimes V_2$ is then $(e_i\otimes f_j)_{i,j}$, ordered lexicographically. Moreover we use the following notation for one-parameter subgroups of some $\gl_{m_1}(\C)\times\dots\times\gl_{m_p}(\C)$:
\[ \begin{array}{rccl}
\tau: & \C^* & \longrightarrow & \gl_{m_1}(\C)\times\dots\times\gl_{m_p}(\C)\\
 & t & \longmapsto & (\begin{pmatrix}
t^{a^{(1)}_1} &&&\\
 & t^{a^{(1)}_2} && \\
 && \ddots & \\
 &&& t^{a^{(1)}_{m_1}}
\end{pmatrix},\dots,\begin{pmatrix}
t^{a^{(p)}_1} &&&\\
 & t^{a^{(p)}_2} && \\
 && \ddots & \\
 &&& t^{a^{(p)}_{m_p}}
\end{pmatrix})
\end{array} \]
is denoted by $\tau = \left( a^{(1)}_1 , a^{(1)}_2 , \dots , a^{(1)}_{m_1} | \dots | a^{(p)}_1 , \dots , a^{(p)}_{m_p} \right)$.

\vspace{5mm}

\noindent\underline{Destabilising one-parameter subgroup for $\overline{x}$:} We set the following one-parameter subgroup of $G$:
\[ \tau_0=\left.\big(1,-1,0,\dots,0\text{ }\right|-1,1,0,\dots,0\big). \]
Then, since the action of $\tau_0(t)$ on the lines $\C e_1$, $\C f_2$, and $\C\varphi_n$ is the multiplication by $t$, $t$, and $1$ respectively, we have
\[ \mu^{\overline{\Lb}}(\overline{x},\tau_0)=2. \]
Let now $x\in\pi^{-1}(\overline{x})$. We want to calculate $\mu^{\Mb}(x,\tau_0)$. Thanks to the way $\mu$ is defined (first, one has to take the limit when $t\to 0$ from $\tau_0(t).x$ and gets a fixed point of $\tau_0$), and since $\overline{x}$ is fixed by $\tau_0$, it suffices to calculate $\mu^{\Mb}(x,\tau_0)$ for $x\in\pi^{-1}(\overline{x})^{\tau_0}$. So we take $x\in\pi^{-1}(\overline{x})^{\tau_0}$.

\vspace{5mm}

\noindent\underline{Form of an element $x\in\pi^{-1}(\overline{x})^{\tau_0}$:} First of all, the action of $\tau_0$ on $V_1$ has three different weights: 1,-1, and 0, whose corresponding subspaces are
\[ W_1=\C e_1, \; W_{-1}=\C e_2,\text{ and }W_0=\C e_3+\dots+\C e_{n_1}. \]
Thus, the component of $x$ in $\fl(V_1)$ is a flag given by a basis of $V_1$ composed of: $e_1$ at first, $e_2$ in a position $i$ between 2 and $n_1$, and $n_1-2$ vectors forming a basis of $W_0$. For the same reasons, there exists an integer $j$ between 2 and $n_2$ such that the second component of $x$ (in $\fl(V_2)$) is a flag given by a basis of $V_2$ composed of $f_2$ at first, $f_1$ in position $j$, and $n_2-2$ vectors forming a basis of $\C f_3+\dots+\C f_{n_2}$.

\vspace{5mm}

For the third component (in $\fl(V_1\otimes V_2)$) of $x$: the action of $\tau_0$ on $V_1\otimes V_2$ has now five different weights, 2, -2, 1, -1, and 0, whose respective corresponding subspaces are
\[ W_2=\C e_1\otimes f_2, \; W_{-2}=\C e_2\otimes f_1, \; W_1=\C e_1\otimes f_3+\dots+\C e_1\otimes f_{n_2}+\C e_3\otimes f_2+\dots+\C e_{n_1}\otimes f_2, \]
\[ W_{-1}=\C e_2\otimes f_3+\dots+\C e_2\otimes f_{n_2}+\C e_3\otimes f_1+\dots+\C e_{n_1}\otimes f_1, \]
\[ W_0\text{ spanned by the rest of the }e_i\otimes f_j. \]
Thus, the component of $x$ in $\fl(V_1\otimes V_2)$ is a flag given by a basis of $V_1\otimes V_2$ of the form:
\begin{itemize}
\item $e_1\otimes f_2$ at a position $k_2$ between 1 and $n_1n_2-1$,
\item $e_2\otimes f_1$ at a position $k_{-2}$ between 1 and $n_1n_2-1$,
\item $n_1+n_2-4$ vectors forming a basis of $W_1$ at positions $m_1^{(1)},\dots,m_{n_1+n_2-4}^{(1)}$ (between 1 and $n_1n_2-1$),
\item $n_1+n_2-4$ vectors forming a basis of $W_{-1}$ at positions $m_1^{(-1)},\dots,m_{n_1+n_2-4}^{(-1)}$ (between 1 and $n_1n_2-1$),
\item the other vectors forming a basis of $W_0$.
\end{itemize}

\vspace{5mm}

\noindent\underline{Calculation of the action of $\tau_0$ on the fibre of $\Mb$ over $x$:} (We denote this fibre by $\Mb_x$).\\
Let us recall another description, for $\delta$ a partition, of the line bundle $\Lb_\delta$ over a flag variety $\fl(V)$ (with $\dim V=n\geq\ell(\delta)$). We have the embedding
\vspace{-5mm}
\begin{changemargin}{-2mm}{-2mm}
\[ \begin{array}{rccl}
\iota: & \fl(V) & \longrightarrow & \prod_{k=1}^n\pp(\bigwedge^k V)\\
 & (\C v_1,\C v_1\oplus\C v_2,\dots,\C v_1\oplus\dots\oplus\C v_n) & \longmapsto & (\C v_1,\C(v_1\wedge v_2),\dots,\C(v_1\wedge\dots\wedge v_n)
\end{array}. \]
\end{changemargin}
Then $\Lb_\delta$ is the pull-back of the line bundle $\oo(\delta_1-\delta_2)\otimes\dots\otimes\oo(\delta_{n-1}-\delta_n)\otimes\oo(\delta_n)$ by $\iota$ (for all the partitions that we use, we take the convention that, if $i>\ell(\delta)$, $\delta_i$ is simply 0). Using this description and the form of an element $x\in\pi^{-1}(\overline{x})^{\tau_0}$, we can easily get the following:

\begin{lemma}
For $x\in\pi^{-1}(\overline{x})^{\tau_0}$, there exist $i\in\llbracket 2,n_1\rrbracket$, $j\in\llbracket 2,n_2\rrbracket$, and $2(n_1+n_2-3)$ distinct integers $k_2,k_{-2},m^{(1)}_1,\dots,$ $m^{(1)}_{n_1+n_2-4},m^{(-1)}_1,\dots,m^{(-1)}_{n_1+n_2-4}\in\llbracket 1,n_1n_2-1\rrbracket$ such that
\[ \mu^\Mb(x,\tau_0)=\lambda_1-\lambda_i+\mu_1-\mu_j+2(\nu'_{k_{-2}}-\nu'_{k_2})+ \sum_{k=1}^{n_1+n_2-4}(\nu'_{m^{(-1)}_k}-\nu'_{m_k^{(1)}}), \]
with $(\nu'_1,\dots,\nu'_{n_1n_2})=(\nu_{n_1n_2},\dots,\nu_1)$. Moreover, all the possibilities for $i,j,k_2,k_{-2}$, the $m^{(1)}_k$'s, and the $m^{(-1)}_k$'s arise when $x$ varies in $\pi^{-1}(\overline{x})^{\tau_0}$.\\
As a consequence,
\[ \max_{x\in\pi^{-1}(\overline{x})}\left(-\mu^\Mb(x,\tau_0)\right)=-\lambda_1+\lambda_2-\mu_1+\mu_2+2(\nu_2-\nu_{n_1n_2})+ \sum_{k=1}^{n_1+n_2-4}(\nu_{k+2}-\nu_{n_1n_2-k}). \]
\end{lemma}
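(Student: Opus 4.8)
The plan is to exploit the additivity of the Hilbert--Mumford index $\mu^\bullet(x,\tau_0)$ with respect to the tensor product, writing $\mu^{\Mb}(x,\tau_0)=\mu^{\Lb_\lambda}(x,\tau_0)+\mu^{\Lb_\mu}(x,\tau_0)+\mu^{\Lb_\nu^*}(x,\tau_0)$, and to compute each summand from the pull-back description of the $\Lb_\delta$ through $\iota$ recalled just above. The elementary ingredient is that at a $\tau_0$-fixed point $\C\omega\in\pp(W)$, where $\omega$ is an eigenvector of weight $w$, one has $\mu^{\oo(1)}(\C\omega,\tau_0)=w$ (directly from the definition $\tau_0(t).\tilde z=t^{-\mu}\tilde z$ applied to the fibre $(\C\omega)^*$). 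Composing with $\iota$, the $k$-th factor $\pp(\bigwedge^k V)$ carries the point $\C(v_1\wedge\dots\wedge v_k)$ of weight $S_k=w_1+\dots+w_k$, and a telescoping (Abel) summation over $\oo(\delta_1-\delta_2)\otimes\dots\otimes\oo(\delta_n)$ collapses $\sum_{k}(\delta_k-\delta_{k+1})S_k+\delta_nS_n$ to the clean formula $\mu^{\Lb_\delta}(x,\tau_0)=\sum_k\delta_k w_k$, where $w_k$ is the $\tau_0$-weight of the $k$-th flag vector. Reading the weights $1,-1,0$ off the eigenspaces of $V_1$ (and the analogues $1,-1,0$ of $V_2$) gives at once the contributions $\lambda_1-\lambda_i$ and $\mu_1-\mu_j$.

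For the third factor I would apply $\mu^{\Lb_\nu^*}=-\mu^{\Lb_\nu}$ together with the five-eigenspace decomposition $V_1\otimes V_2=W_2\oplus W_{-2}\oplus W_1\oplus W_{-1}\oplus W_0$ computed above. Feeding the weights $2,-2,1,-1$ borne by the flag vectors at positions $k_2,k_{-2},m^{(1)}_k,m^{(-1)}_k$ into $\sum_k\nu_k w_k$ produces the terms involving $\nu$; the passage to $\nu'$ via $(\nu'_1,\dots,\nu'_{n_1n_2})=(\nu_{n_1n_2},\dots,\nu_1)$ is exactly the order-reversal encoded by the dualisation of $\Lb_\nu$ (equivalently, by identifying $\fl(V_1\otimes V_2)$ with $\fl((V_1\otimes V_2)^*)$ through annihilators, under which the $k$-th step corresponds to the $(n_1n_2-k)$-th step of the dual flag). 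This yields the displayed expression for $\mu^{\Mb}(x,\tau_0)$.

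To justify that every admissible tuple occurs, I would observe that a $\tau_0$-fixed point of $\pi^{-1}(\overline{x})$ is precisely a $\tau_0$-stable flag compatible with $\overline{x}$, and that one is free to order the eigenvectors arbitrarily, the only constraints being $i\in\llbracket 2,n_1\rrbracket$, $j\in\llbracket 2,n_2\rrbracket$ and that the vectors of non-zero weight occupy distinct positions. The single point needing care is why these positions lie in $\llbracket 1,n_1n_2-1\rrbracket$ rather than $\llbracket 1,n_1n_2\rrbracket$: since $\overline{x}$ fixes the hyperplane $\ker\varphi_n$ as the $(n_1n_2-1)$-st step of the flag, and $\varphi_n$ is itself $\tau_0$-invariant of weight $0$ (so that $e_1\otimes f_1\notin\ker\varphi_n$ has weight $0$), the last flag vector is forced to have weight $0$, which reserves position $n_1n_2$ for a weight-$0$ vector.

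Finally, maximising $-\mu^{\Mb}(x,\tau_0)$ splits into independent optimisations. As $\lambda,\mu$ are non-increasing and $i,j\geq 2$, the terms $-\lambda_1+\lambda_i$ and $-\mu_1+\mu_j$ are maximal at $i=j=2$. For the remaining part I would invoke the rearrangement inequality: $\nu'$ is non-decreasing, so to maximise $2\nu'_{k_2}-2\nu'_{k_{-2}}+\sum_k(\nu'_{m^{(1)}_k}-\nu'_{m^{(-1)}_k})$ one places the positively weighted vectors (the coefficient-$2$ vector $k_2$ first) at the highest available positions $n_1n_2-1,n_1n_2-2,\dots$ and the negatively weighted ones ($k_{-2}$ first) at the lowest positions $1,2,\dots$. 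A short check that $(n_1-2)(n_2-2)+2>0$ ensures the two blocks are disjoint, so the optimum is attained; evaluating $\nu'$ there turns the top positions into $\nu_2,\nu_3,\dots$ and the bottom ones into $\nu_{n_1n_2},\nu_{n_1n_2-1},\dots$, giving the asserted $2(\nu_2-\nu_{n_1n_2})+\sum_{k=1}^{n_1+n_2-4}(\nu_{k+2}-\nu_{n_1n_2-k})$. The main obstacle is the careful bookkeeping of the dualisation, namely the reversal $\nu\mapsto\nu'$ combined with the weight-$0$ constraint on the last flag step, since it is precisely this interplay that produces $\nu_2$ (and not $\nu_1$) in the final bound.
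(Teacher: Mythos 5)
Your proof is correct and takes essentially the approach the paper intends (the paper states this lemma as an immediate consequence of the pull-back description of $\Lb_\delta$ through $\iota$ and of the form of the $\tau_0$-fixed points): additivity of the Hilbert--Mumford index, the telescoped formula $\mu^{\Lb_\delta}(x,\tau_0)=\sum_k\delta_k w_k$, the annihilator identification $\fl(V_1\otimes V_2)\simeq\fl((V_1\otimes V_2)^*)$ producing the reversal $\nu\mapsto\nu'$, the forced weight-$0$ last flag step capping the nonzero-weight positions at $n_1n_2-1$, and the final rearrangement optimisation yielding $\nu_2$ rather than $\nu_1$. One caution on wording only: the literal dual bundle would give $-\sum_k\nu_k w_k$ with no reversal, and the reversal appears precisely because the paper's ``$\Lb_\nu^*$'' is $\Lb_\nu$ transported through the annihilator isomorphism (the bundle whose invariant-section space computes $\Sc^\nu(V_1\otimes V_2)$, consistent with $\pi$ mapping to $\pp((V_1\otimes V_2)^*)$), so your parenthetical ``equivalently'' conflates two genuinely different linearised bundles --- but since you actually invoke the annihilator mechanism together with the position constraint, your displayed formula and the maximum agree exactly with the paper's.
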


Finally, Lemma \ref{x_bar_suffices} leads to the following result:

\begin{prop}\label{strict_bound_murnaghan}
If we set
\[ d_0=\frac{1}{2}\left(-\lambda_1+\lambda_2-\mu_1+\mu_2+2(\nu_2-\nu_{n_1n_2})+ \sum_{k=1}^{n_1+n_2-4}(\nu_{k+2}-\nu_{n_1n_2-k})\right), \]
we have for all $d\in\N$ such that $d>d_0$,\\
$g_{\lambda+d(1),\mu+d(1),\nu+d(1)}=g_{\lambda+\lfloor d_0+1\rfloor(1),\mu+\lfloor d_0+1\rfloor(1),\nu+\lfloor d_0+1\rfloor(1)}$.
\end{prop}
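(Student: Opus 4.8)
The plan is to deduce this proposition directly from the chain of reductions assembled in Sections \ref{restriction_pi-1(x)} and \ref{computation_murnaghan}, so that essentially no new computation is needed. By Lemma \ref{x_bar_suffices} it suffices to exhibit an integer threshold $d_1$ such that, for every $d\geq d_1$, one has the inclusion $\pi^{-1}(\overline{x})\subset X^{us}(\Mb+d\Lb)$; the stabilised value is then automatically the one attained at $d=d_1$. I would take $d_1=\lfloor d_0+1\rfloor$, which is precisely the smallest integer strictly exceeding $d_0$ (whether or not $d_0$ is itself an integer, since $\lfloor d_0+1\rfloor=\lfloor d_0\rfloor+1$).

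To verify the inclusion for a given $d$, I would apply the Hilbert-Mumford criterion in its $\mu$-form: a point $x$ lies in $X^{us}(\Mb+d\Lb)$ as soon as there exists a one-parameter subgroup $\tau$ with $\mu^{\Mb+d\Lb}(x,\tau)>0$. The natural candidate is the destabilising subgroup $\tau_0$ already exhibited for $\overline{x}$. Using the additivity of $\mu$ in the line bundle together with the $G$-equivariance of $\pi$, for any $x\in\pi^{-1}(\overline{x})$ one has
\[ \mu^{\Mb+d\Lb}(x,\tau_0)=\mu^{\Mb}(x,\tau_0)+d\,\mu^{\overline{\Lb}}(\overline{x},\tau_0)=\mu^{\Mb}(x,\tau_0)+2d, \]
since $\mu^{\overline{\Lb}}(\overline{x},\tau_0)=2$. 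Because $\mu^{\Mb}(x,\tau_0)$ depends on $x$ only through its limit point under $\tau_0$ (which remains in the fibre, as $\overline{x}$ is $\tau_0$-fixed), it is enough to control this quantity on the fixed locus $\pi^{-1}(\overline{x})^{\tau_0}$, which is exactly where the preceding lemma operates.

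From that lemma, the maximal value of $-\mu^{\Mb}(x,\tau_0)$ over $x\in\pi^{-1}(\overline{x})$ equals $2d_0$, where $d_0$ is the half-sum appearing in the statement. Hence $\mu^{\Mb+d\Lb}(x,\tau_0)>0$ holds simultaneously for all $x\in\pi^{-1}(\overline{x})$ exactly when $2d>2d_0$, i.e. when $d>d_0$. For every integer $d\geq d_1=\lfloor d_0+1\rfloor$ this strict inequality is satisfied, so the inclusion $\pi^{-1}(\overline{x})\subset X^{us}(\Mb+d\Lb)$ holds, and Lemma \ref{x_bar_suffices} then yields the asserted equality of Kronecker coefficients with the common value at $d=\lfloor d_0+1\rfloor$.

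I do not expect a genuine obstacle here, because the two substantive ingredients---the reduction of the inclusion problem to the single fibre $\pi^{-1}(\overline{x})$ and the explicit evaluation of $\mu^{\Mb}(\cdot,\tau_0)$ over its $\tau_0$-fixed points---have already been established. The only points requiring care are the passage from the strict threshold $d>d_0$ to the integer index $\lfloor d_0+1\rfloor$, and the remark that the maximum above is actually attained by some genuine $x$ in the fibre (guaranteed by the preceding lemma, which asserts that all the admissible index configurations occur), so that no smaller integer $d$ can be certified by this argument.
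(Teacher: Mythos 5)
Your proposal is correct and takes essentially the same route as the paper's own proof: apply the Hilbert--Mumford criterion with the destabilising subgroup $\tau_0$, use the additivity $\mu^{\Mb+d\Lb}(x,\tau_0)=\mu^{\Mb}(x,\tau_0)+2d$ together with the preceding lemma's identification $\max_{x\in\pi^{-1}(\overline{x})}\bigl(-\mu^{\Mb}(x,\tau_0)\bigr)=2d_0$, and conclude via Lemma \ref{x_bar_suffices}. Your extra care about the integer threshold $\lfloor d_0+1\rfloor$ and the attainment of the maximum is sound but does not change the substance of the argument.
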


\begin{proof}
For all $x\in\pi^{-1}(\overline{x})$ and all $d>d_0$,
\[ \mu^{\Mb+d\Lb}(x,\tau_0)=\mu^\Mb(x,\tau_0)+d\mu^{\overline{\Lb}}(\overline{x},\tau_0)=\mu^\Mb(x,\tau_0)+2d>0 \]
because $d>d_0\geq -\dps\frac{1}{2}\mu^\Mb(x,\tau_0)$. Thus, by Hilbert-Mumford criterion, $x\in X^{us}(\Mb+d\Lb)$, and we conclude using Lemma \ref{x_bar_suffices}.
\end{proof}

\begin{rmk}
We even have the inclusion $X^{ss}(\Mb+d\Lb)\subset X^{ss}(\Lb)$ which is true for all $d>d_0$, $d\in\Q$. Indeed, the definition of $X^{ss}(\Nb)$ (and the one from $\mu^\Nb(.,\tau_0)$) can be extended to $\Nb\in\pic^G(X)\otimes_{\Z}\Q$:
\[ X^{ss}(\Nb)=\{x\in X\,|\,\exists k\in\N^*\text{ s.t. }\Nb^{\otimes k}\in\pic^G(X)\text{ and }\exists\sigma\in\h^0(X,\Nb^{\otimes k})^G, \, \sigma(x)\neq 0\}. \] 
\end{rmk}

\subsection[Case of the triple ((1,1),(1,1),(2))]{Case of the triple $\big((1,1),(1,1),(2)\big)$}\label{other_example}

We now have a look at the triple $\big((1,1),(1,1),(2)\big)$ which is also stable (cf. for instance \cite{stembridge}). We consider
\vspace{-5mm}
\begin{changemargin}{-6mm}{-6mm}
\[ \begin{array}{rccl}
\pi: & X & \longrightarrow & \dps\overset{\text{denoted }\overline{X}}{\overbrace{\fl(V_1;1,2)\times\fl(V_2;1,2)\times\pp((V_1\otimes V_2)^*)}}\\
 & ((W_i)_i,(W'_i)_i,(W''_i)_i) & \longmapsto & ((W_1,W_2),(W'_1,W'_2),\{\varphi\in(V_1\otimes V_2)^*\,/\,\ker\varphi=W''_{n_1n_2-1}\})
\end{array}. \]
\end{changemargin}
Similarly as before, the line bundle $\Lb$ is the pull-back by $\pi$ of $\overline{\Lb}=\Lb_\alpha\otimes\Lb_\beta\otimes\oo(2)$.
The same things that we have done throughout the previous section are also going to work here. The only changes will be the orbits of $G$ in $\overline{X}$ which are unstable:

\begin{prop}
If $n_1\geq 3$ or $n_2\geq 3$, then the set $\overline{X}^{us}(\overline{\Lb})$ of unstable points consists in the union of the closures of two orbits: the one of $\overline{x}_1=((\C e_1,\C e_1+\C e_2),(\C f_3,\C f_3+\C f_1),\C\varphi_n)$ and the one of $\overline{x}_2=((\C e_1,\C e_1+\C e_2),(\C f_2,\C f_2+\C f_3),\C\varphi_n)$.
\end{prop}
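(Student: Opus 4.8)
The plan is to mimic the three steps of Section~\ref{example_murnaghan} — determination of the unique invariant section, description of its zero locus, and orbit analysis — the only genuine novelty being that the relevant invariant is now a $2\times 2$ minor rather than a single linear form.

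First I would identify the unstable locus as the zero set of the (up to scalar) unique invariant section. Exactly as before, weak stability of $\big((1,1),(1,1),(2)\big)$ gives $\h^0(\overline{X},\overline{\Lb}^{\otimes d})^G\simeq\C$ for every $d$, so $\overline{X}^{ss}(\overline{\Lb})=\{\overline{\sigma}_0\neq 0\}$ for any nonzero $\overline{\sigma}_0\in\h^0(\overline{X},\overline{\Lb})^G$. The space $\h^0(\overline{X},\overline{\Lb})^G$ is the multiplicity space of $\Sc^{(1,1)}V_1\otimes\Sc^{(1,1)}V_2$ inside $\sym^2(V_1\otimes V_2)$, and a concrete generator is the section sending $(W_2,W'_2,\C\varphi)$ to the determinant $\det\big(\varphi(u_i\otimes u'_j)\big)_{1\le i,j\le 2}$, where $(u_1,u_2)$ and $(u'_1,u'_2)$ are bases of $W_2$ and $W'_2$ (this is manifestly well defined, being alternating in each pair of vectors and quadratic in $\varphi$). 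Hence
\[ \overline{X}^{us}(\overline{\Lb})=\Big\{(W_2,W'_2,\C\varphi)\ \text{s.t.}\ \rk\big(\varphi|_{W_2\otimes W'_2}\big)\le 1\Big\}, \]
the zero divisor of one section, so pure of codimension~$1$.

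Next, as in Section~\ref{Xus_for_Murnaghan}, I would use $\pp((V_1\otimes V_2)^*)\simeq\pp(\Hom(V_1,V_2^*))$, write $\C\psi$ for the image of $\C\varphi$, and record the induced $G$-action. The new geometric input is that, on the open part of $\overline{X}^{us}$ where $\rk(\varphi|_{W_2\otimes W'_2})=1$, the degenerate $2\times 2$ form singles out a \emph{left kernel line} $L\subset W_2$ and a \emph{right kernel line} $L'\subset W'_2$. The orbit of a point is then controlled by three data: the rank of $\psi$; the positions of $W_2,W'_2$ relative to $\ker\psi$, $\ker{}^t\psi$, $\im\psi$ (as in the Murnaghan analysis); and the positions of the marked lines $W_1,W'_1$ relative to $L,L'$. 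I expect the generic unstable configurations to organise into two orbits, according to whether the degeneracy is realised on the $V_1$-side or on the $V_2$-side: this left/right dichotomy is exactly what produces the two representatives $\overline{x}_1$ (with $W'_1=L'$) and $\overline{x}_2$ (with $W_1=L$).

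The hard part will be the orbit classification itself: one must show that $\overline{X}^{us}(\overline{\Lb})=\cl(G.\overline{x}_1)\cup\cl(G.\overline{x}_2)$, i.e. that every unstable configuration lies in the union of the two closures, the usual $\codim\ge 2$ argument discarding the loci where $\psi$ drops rank. This forces a case analysis on $(n_1,n_2)$ — one may assume $n_2\ge 3$ up to exchanging $V_1$ and $V_2$, and the hypothesis $n_1\ge 3$ or $n_2\ge 3$ is precisely what leaves enough room in $V_2$ for the two strata to appear and be distinct, the excluded case $n_1=n_2=2$ (where $W_2=V_1$, $W'_2=V_2$ are forced and the invariant is simply $\det\psi$) behaving differently. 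Once this description of $\overline{X}^{us}(\overline{\Lb})$ is established, the remaining steps (reduction to the fibres $\pi^{-1}(\overline{x}_i)$, choice of destabilising one-parameter subgroups, determination of the $\tau$-fixed points of $\pi^{-1}(\overline{x}_i)$, and computation of $\mu^{\Mb}$) go through in the same way as in Section~\ref{example_murnaghan}, now summed over the two orbits and yielding the two-branch bound of Theorem~\ref{thm_bound2}.
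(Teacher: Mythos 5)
Your reduction is correct and is exactly what the paper intends (its own proof of this statement is the single sentence ``completely similar to the proof of Proposition~\ref{orbits_murnaghan}''): weak stability of $\big((1,1),(1,1),(2)\big)$ forces the invariant ring of $\overline{\Lb}$ to be $\C[\overline{\sigma}_0]$, the generator $\overline{\sigma}_0$ is the $2\times 2$ determinant $\det\big(\varphi(u_i\otimes u'_j)\big)$, and hence $\overline{X}^{us}(\overline{\Lb})$ is the locus where $\rk\big(\varphi|_{W_2\otimes W'_2}\big)\leq 1$, pure of codimension one. But everything after that in your text is announced rather than proved: the orbit classification, which you yourself call ``the hard part'', \emph{is} the content of the proposition, and your proposal contains no argument for the inclusion $\overline{X}^{us}(\overline{\Lb})\subset\cl(G.\overline{x}_1)\cup\cl(G.\overline{x}_2)$. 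As it stands, this is a plan, not a proof.

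More seriously, the dichotomy you expect to find is not what the geometry gives, so the deferred step would fail if you attempted it. Work on $\overline{X}$ as defined, i.e.\ remembering the lines $W_1,W'_1$, and suppose $n_2\geq 3$ (the case covered by the displayed representatives). Consider
\[ p=\big((\C e_2,\C e_1+\C e_2),(\C f_2,\C f_2+\C f_3),\C\varphi_n\big). \]
It is unstable, since $\big(\varphi_n(e_i\otimes f_j)\big)_{i\in\{1,2\},\,j\in\{2,3\}}=\left(\begin{smallmatrix}0&0\\ 1&0\end{smallmatrix}\right)$ has rank one. Now let $S\subset\overline{X}^{us}(\overline{\Lb})$ be the open $G$-stable stratum where $\psi$ has rank $n$, $W_2\cap\ker\psi=0$, $W'_2\cap\ker{}^t\psi=0$, and the composite $c:W_2\to (W'_2)^*$ has rank exactly one; on $S$ the kernel lines $L=\ker c\subset W_2$ and $L'\subset W'_2$ vary algebraically, so $\{W_1=L\}$ and $\{W'_1=L'\}$ are closed $G$-stable subsets of $S$. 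The orbit of $\overline{x}_2$ lies in the first, the orbit of $\overline{x}_1$ in the second, whereas $p\in S$ (for which $L=\C e_1$, $L'=\C f_3$) lies in neither; since $G.\overline{x}_i\subset S$, this gives $p\notin\cl(G.\overline{x}_1)\cup\cl(G.\overline{x}_2)$. In fact the points of $S$ with $W_1\neq L$ \emph{and} $W'_1\neq L'$ form a single orbit, dense in $\overline{X}^{us}(\overline{\Lb})$ and of dimension one more than the orbits of $\overline{x}_1,\overline{x}_2$ (the stabiliser of $(W_2,W'_2,\C\psi)$ acts on $\pp(W_2)\times\pp(W'_2)$ with orbits $\{L\}\times\{L'\}$, $\{L\}\times(\pp(W'_2)\setminus\{L'\})$, $(\pp(W_2)\setminus\{L\})\times\{L'\}$ and the open complement): your ``left/right dichotomy'' describes two non-dense orbits and misses the generic one. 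The two-orbit statement only becomes true after forgetting the lines, i.e.\ on $\fl(V_1;2)\times\fl(V_2;2)\times\pp((V_1\otimes V_2)^*)$, where $\overline{\Lb}$ is genuinely ample and where the images of $\overline{x}_1$ and $\overline{x}_2$ lie in one and the same dense orbit. So a correct proof must either pass to that smaller product (enlarging the fibres of $\pi$ accordingly) or add the orbit of $p$ to the list and destabilise its fibres as well; note that this is not cosmetic, since a $\tau_2$-fixed point of $\pi^{-1}(p)$ has $e_2$ \emph{before} $e_1$ and $f_2$ before $f_3$ in its flags, producing the contribution $-\lambda_2+\lambda_3-\mu_2+\mu_3$ rather than the terms recorded in Section~\ref{other_example}, and hence a weaker (larger) bound than the one stated in Theorem~\ref{thm_bound2}.
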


\begin{proof}
It is completely similar to the proof of Proposition \ref{orbits_murnaghan}.
\end{proof}

We then set two destabilising one-parameter subgroups of $G$ for the two elements $\overline{x}_1$ and $\overline{x}_2$ (we still consider the case when $n_1,n_2\geq 3$):
\[ \tau_1=\left.\big(0,1,-1,0,\dots,0\text{ }\right|\text{ }0,-1,1,0,\dots,0\big) \]
and
\[ \tau_2=\left.\big(1,0,-1,0,\dots,0\text{ }\right|-1,0,1,0,\dots,0\big), \]
which give
\[ \mu^{\overline{\Lb}}(\overline{x}_1,\tau_1)=2=\mu^{\overline{\Lb}}(\overline{x}_2,\tau_2). \]
As before, we only have to get a bound from which $\pi^{-1}(\overline{x}_1)\subset X^{us}(\Mb+d\Lb)$ and $\pi^{-1}(\overline{x}_2)\subset X^{us}(\Mb+d\Lb)$. We have already seen the form of elements of $\pi^{-1}(\overline{x}_1)^{\tau_1}$ and $\pi^{-1}(\overline{x}_2)^{\tau_2}$, and as a consequence we get:

\begin{lemma}
If $n_1\geq 3$ and $n_2\geq 3$,
\[ \max_{x_1\in\pi^{-1}(\overline{x}_1)}(-\mu^\Mb(x_1,\tau_1))=-\lambda_2+\lambda_3-\mu_1+\mu_3+2(\nu_2-\nu_{n_1n_2})+ \sum_{k=1}^{n_1+n_2-4}(\nu_{k+2}-\nu_{n_1n_2-k}) \]
and
\[ \max_{x_2\in\pi^{-1}(\overline{x}_2)}(-\mu^\Mb(x_2,\tau_2))=-\lambda_1+\lambda_3-\mu_2+\mu_3+2(\nu_2-\nu_{n_1n_2})+ \sum_{k=1}^{n_1+n_2-4}(\nu_{k+2}-\nu_{n_1n_2-k}). \]
\end{lemma}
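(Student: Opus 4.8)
The plan is to reproduce, almost verbatim, the computation already performed for the triple $\big((1),(1),(1)\big)$ in the Lemma of Section \ref{computation_murnaghan}. The starting point is the additivity of $\mu^\Nb(x,\tau)$ with respect to tensor products of $G$-linearised line bundles and to products of varieties. Since $\Mb=\Lb_\lambda\otimes\Lb_\mu\otimes\Lb_\nu^*$ on $X=\fl(V_1)\times\fl(V_2)\times\fl(V_1\otimes V_2)$, this gives
\[ \mu^\Mb(x,\tau_i)=\mu^{\Lb_\lambda}(x_{V_1},\tau_i)+\mu^{\Lb_\mu}(x_{V_2},\tau_i)+\mu^{\Lb_\nu^*}(x_{V_1\otimes V_2},\tau_i), \]
where $x_{V_1},x_{V_2},x_{V_1\otimes V_2}$ denote the three flag components of $x$. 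For a $\tau$-fixed full flag whose $k$-th basis vector has $\tau$-weight $w_k$, the Plücker description of $\Lb_\delta$ recalled just before the Murnaghan Lemma yields, after a summation by parts,
\[ \mu^{\Lb_\delta}(x,\tau)=\sum_k\delta_k\,w_k, \]
together with $\mu^{\Lb_\nu^*}=-\mu^{\Lb_\nu}$; the reversal of indices $\nu'_l=\nu_{n_1n_2+1-l}$ comes, exactly as in the $\big((1),(1),(1)\big)$ case, from the passage to $\pp((V_1\otimes V_2)^*)$ in $\pi$.

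Next I would determine the weights and the shape of a fixed point. For $\tau_1$ the weights on $(e_1,e_2,e_3,\dots)$ are $(0,1,-1,0,\dots)$ and on $(f_1,f_2,f_3,\dots)$ are $(0,-1,1,0,\dots)$; a $\tau_1$-fixed point of $\pi^{-1}(\overline{x}_1)$ is then a flag extending $\C e_1\subset\C e_1\oplus\C e_2$ (resp. $\C f_3\subset\C f_3\oplus\C f_1$) in which the unique remaining nonzero-weight vector $e_3$ (resp. $f_2$) occupies some position $i\in\llbracket 3,n_1\rrbracket$ (resp. $j\in\llbracket 3,n_2\rrbracket$). The induced weights on $V_1\otimes V_2$ are $2$ (only $e_2\otimes f_3$), $-2$ (only $e_3\otimes f_2$), $\pm 1$ (each of dimension $n_1+n_2-4$) and $0$ elsewhere — the very same pattern as in the Murnaghan case. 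I would then verify that precisely the diagonal vectors $e_a\otimes f_a$ ($a\le n$) lie outside $\ker\varphi_n$, that all of them have weight $0$, and hence that the single flag vector not in $\ker\varphi_n=W''_{n_1n_2-1}$ is forced to weight $0$ and to the last position, so that the nonzero-weight vectors occupy positions in $\llbracket 1,n_1n_2-1\rrbracket$. The analysis for $\tau_2$ and $\overline{x}_2$ is entirely parallel, with the first two coordinates of each block shifted.

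Finally I would maximise $-\mu^\Mb$ over these fixed points. Since $\lambda$ and $\mu$ are non-increasing, $-\mu^{\Lb_\lambda}(x_{V_1},\tau_1)=-\lambda_2+\lambda_i$ and $-\mu^{\Lb_\mu}(x_{V_2},\tau_1)=-\mu_1+\mu_j$ are maximal at $i=j=3$, giving the contribution $-\lambda_2+\lambda_3-\mu_1+\mu_3$ for $\tau_1$ (and $-\lambda_1+\lambda_3-\mu_2+\mu_3$ for $\tau_2$). The third factor is identical to the one already optimised in the Murnaghan Lemma: assigning the largest available positions to the positive-weight vectors — with the weight-$2$ vector, of coefficient $2$, taking position $n_1n_2-1$ — and the smallest to the negative-weight ones produces $2(\nu_2-\nu_{n_1n_2})+\sum_{k=1}^{n_1+n_2-4}(\nu_{k+2}-\nu_{n_1n_2-k})$. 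Adding the three contributions gives the two announced formulas.

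The main obstacle is bookkeeping rather than anything conceptual: one must keep straight the index reversal $\nu'$ produced by the dual bundle $\Lb_\nu^*$ and the projection onto $\pp((V_1\otimes V_2)^*)$, and must check carefully that the hyperplane constraint $W''_{n_1n_2-1}=\ker\varphi_n$ interacts only with the weight-$0$ eigenspace, so that it does not disturb the optimal placement of the nonzero-weight vectors. Because the weight pattern on $V_1\otimes V_2$ coincides exactly with that of the $\big((1),(1),(1)\big)$ computation, the third summand needs no fresh optimisation, and the only genuinely new input is the elementary weight analysis on $V_1$ and $V_2$ for the two pairs $(\overline{x}_1,\tau_1)$ and $(\overline{x}_2,\tau_2)$.
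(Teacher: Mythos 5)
Your proposal is correct and follows essentially the same route as the paper: it recycles the Murnaghan computation (additivity of $\mu^{\Mb}$ over the three factors, the Pl\"ucker description of $\Lb_\delta$, the index reversal for the dual bundle on the third factor), determines the $\tau_i$-fixed flags in $\pi^{-1}(\overline{x}_i)$ — noting that positions $1,2$ are now pinned by the partial flags of $\overline{x}_i$, so the free nonzero-weight vector sits at position $i,j\geq 3$ — and observes that the weight pattern on $V_1\otimes V_2$ is identical to the $\big((1),(1),(1)\big)$ case, so the third summand is optimised exactly as before. Your checks that the diagonal vectors $e_a\otimes f_a$ all have $\tau_i$-weight $0$, and hence that the hyperplane constraint $W''_{n_1n_2-1}=\ker\varphi_n$ only interacts with the weight-$0$ eigenspace, correctly justify the step the paper leaves implicit.
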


\vspace{5mm}

What remains to be seen is what happens when $n_1=2$ or $n_2=2$. Let us focus on the case where $n_1=2$ and $n_2\geq 3$. Then, $\tau_1$ and $\tau_2$ become:
\[ \tau_1=\left.\big(0,1\text{ }\right|\text{ }0,-1,1,0,\dots,0\big), \qquad \tau_2=\left.\big(1,0\text{ }\right|-1,0,1,0,\dots,0\big). \]
We still have $\mu^{\overline{\Lb}}(\overline{x}_1,\tau_1)=2=\mu^{\overline{\Lb}}(\overline{x}_2,\tau_2)$, but this time
\[ \max_{x_1\in\pi^{-1}(\overline{x}_1)}(-\mu^\Mb(x_1,\tau_1))=-\lambda_2-\mu_1+\mu_3+2\nu_2-\nu_{2n_2}+\sum_{k=1}^{n_2-1}\nu_{k+2}, \]
and
\[ \max_{x_2\in\pi^{-1}(\overline{x}_2)}(-\mu^\Mb(x_2,\tau_2))=-\lambda_1-\mu_2+\mu_3+2\nu_2-\nu_{2n_2}+\sum_{k=1}^{n_2-1}\nu_{k+2}. \]

\vspace{5mm}

By exchanging the roles of $V_1$ and $V_2$ (that is to say $\lambda$ and $\mu$), we easily get the result for the case $n_1\geq 3$, $n_2=2$. Only the case $n_1=2=n_2$ remains, and we could do exactly the same. But the result we would get would be exactly the formula for $n_1=2$, $n_2\geq 3$ in which we take $\mu_3$ to be zero. Finally we have:

\begin{prop}\label{strict_bounds_other_case}
If we set $m=\max(-\lambda_2-\mu_1,-\lambda_1-\mu_2)$ and
\[ d_0=\left\lbrace\begin{array}{ll}
\dps\frac{1}{2}\left(m+\lambda_3+ \mu_3+2(\nu_2-\nu_{n_1n_2})+\dps\sum_{k=1}^{n_1+n_2-4}(\nu_{k+2}-\nu_{n_1n_2-k})\right) & \text{if }n_1,n_2\geq 3\\
\dps\frac{1}{2}\left(m+\mu_3+2\nu_2-\nu_{2n_2}+\dps\sum_{k=1}^{n_2-1} \nu_{k+2}\right) & \text{if }n_1=2\\
\dps\frac{1}{2}\left(m+\lambda_3+2\nu_2-\nu_{2n_1}+\dps\sum_{k=1}^{n_1-1} \nu_{k+2}\right) & \text{if }n_2=2
\end{array}\right., \]
then we have, for all $d\in\N$ such that $d>d_0$,
\[ g_{\lambda+d(1,1),\mu+d(1,1),\nu+d(2)}=g_{\lambda+\lfloor d_0+1\rfloor(1,1),\mu+\lfloor d_0+1\rfloor(1,1),\nu+\lfloor d_0+1\rfloor(2)}. \]
\end{prop}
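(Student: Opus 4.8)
The plan is to follow verbatim the strategy already carried out in Section~\ref{computation_murnaghan} for the triple $\big((1),(1),(1)\big)$, the only differences being that the unstable locus downstairs is now the union of \emph{two} orbit closures and that one must keep track of the degenerate dimensions $n_1=2$ or $n_2=2$. By the proposition preceding the statement we know $\overline{X}^{us}(\overline{\Lb})=\cl(G.\overline{x}_1)\cup\cl(G.\overline{x}_2)$, so the reduction of Section~\ref{restriction_pi-1(x)} applies once to each orbit: since $\pi$ is a $G$-equivariant fibration and $X^{us}(\Mb+d\Lb)$ is closed, it suffices to prove that, for $d$ large, $\pi^{-1}(\overline{x}_1)\subset X^{us}(\Mb+d\Lb)$ and $\pi^{-1}(\overline{x}_2)\subset X^{us}(\Mb+d\Lb)$; the union of the two dense sets $\pi^{-1}(G.\overline{x}_1)$ and $\pi^{-1}(G.\overline{x}_2)$ then exhausts $X^{us}(\Lb)$, giving $X^{us}(\Lb)\subset X^{us}(\Mb+d\Lb)$, that is $X^{ss}(\Mb+d\Lb)\subset X^{ss}(\Lb)$. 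This is exactly the two-representative analogue of Lemma~\ref{x_bar_suffices}.

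Next I would run the Hilbert--Mumford argument of Proposition~\ref{strict_bound_murnaghan} separately with the destabilising subgroups $\tau_1$ and $\tau_2$. Because $\mu^{\Mb+d\Lb}(x,\tau_i)=\mu^{\Mb}(x,\tau_i)+d\,\mu^{\overline{\Lb}}(\overline{x}_i,\tau_i)$ and $\mu^{\overline{\Lb}}(\overline{x}_i,\tau_i)=2$, a point $x\in\pi^{-1}(\overline{x}_i)$ lies in $X^{us}(\Mb+d\Lb)$ as soon as $2d>-\mu^{\Mb}(x,\tau_i)$; and, just as before, it is enough to check this on the $\tau_i$-fixed points of the fibre, since $\mu^{\Mb}(x,\tau_i)$ only depends on $\lim_{t\to0}\tau_i(t).x$. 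Hence $\pi^{-1}(\overline{x}_i)\subset X^{us}(\Mb+d\Lb)$ precisely when $d>\frac{1}{2}\max_{x\in\pi^{-1}(\overline{x}_i)}\big(-\mu^{\Mb}(x,\tau_i)\big)$, and the two maxima on the right are exactly the quantities supplied by the preceding lemmas.

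It then remains to merge the two constraints into the single bound $d_0$. Inspecting the formulas, the two maxima share the common part $\lambda_3+\mu_3+2(\nu_2-\nu_{n_1n_2})+\sum_{k=1}^{n_1+n_2-4}(\nu_{k+2}-\nu_{n_1n_2-k})$ (in the case $n_1,n_2\geq3$) and differ only in the summand $-\lambda_2-\mu_1$ versus $-\lambda_1-\mu_2$; setting $m=\max(-\lambda_2-\mu_1,-\lambda_1-\mu_2)$ therefore makes $d_0=\frac{1}{2}\big(m+\cdots\big)$ dominate both halves simultaneously. The cases $n_1=2$ and $n_2=2$ are handled the same way, using the separate max-computations recorded above (where $\lambda_3$, resp.\ $\mu_3$, drops out and the $\nu$-terms shorten because $\dim(V_1\otimes V_2)$ becomes $2n_2$, resp.\ $2n_1$); by the symmetry $V_1\leftrightarrow V_2$ the $n_2=2$ formula is obtained from the $n_1=2$ one by swapping $\lambda$ and $\mu$, and $n_1=n_2=2$ is the specialisation $\mu_3=0$. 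For every $d>d_0$ both fibres are thus unstable, so $X^{ss}(\Mb+d\Lb)\subset X^{ss}(\Lb)$ and the coefficient stabilises by Proposition~\ref{inclusion_of_ss_points_suffices}, the offset $\lfloor d_0+1\rfloor$ arising exactly as in Proposition~\ref{strict_bound_murnaghan}.

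The genuinely new work, and the main place where care is needed, is the bookkeeping for the small dimensions: the form of a $\tau_i$-fixed point in $\pi^{-1}(\overline{x}_i)$ changes when a weight space of $\tau_i$ on $V_1\otimes V_2$ collapses, which is precisely what alters the range of the $\nu$-indices. Verifying that $m$ really collapses the two separate bounds into one value of $d_0$—rather than forcing a $\max$ of two ceilings—is the other point to check, but it follows at once from the observation that the $\lambda,\mu$-dependence of the two maxima is confined to the single differing summand.
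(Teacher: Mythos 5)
Your proposal is correct and follows essentially the same route as the paper: the paper's own proof is literally a reduction to the argument of Proposition~\ref{strict_bound_murnaghan}, applied separately to the two unstable orbit closures via $\tau_1$ and $\tau_2$, with the two resulting constraints merged into the single bound $d_0$ through $m=\max(-\lambda_2-\mu_1,-\lambda_1-\mu_2)$, exactly as you describe. Your observations about the degenerate cases $n_1=2$, $n_2=2$ (including $n_1=n_2=2$ as the specialisation $\mu_3=0$) and about why the max collapses into one bound rather than a maximum of two ceilings also match the paper's treatment.
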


\begin{proof}
It is exactly in the same way as the proof of Proposition \ref{strict_bound_murnaghan}.
\end{proof}

\begin{rmk}
We can notice that, in the cases where $n_1=2$ or $n_2=2$, we have two possible bounds: the one which concerns only these cases, or the general one, which we can use by considering $\lambda$ (respectively $\mu$) of length 3 by setting $\lambda_3=0$ (respectively $\mu_3=0$). We will come back to this in Remark \ref{rmk_on_bound2}.
\end{rmk}

\begin{rmk}
As after Proposition \ref{strict_bound_murnaghan}, we have also here that the inclusion $X^{ss}(\Mb+d\Lb)\subset X^{ss}(\Lb)$ is true for all $d>d_0$, $d\in\Q$.
\end{rmk}

\subsection{Slight improvement of the previous bounds}\label{improvement}

In Propositions \ref{strict_bound_murnaghan} and \ref{strict_bounds_other_case}, we got an integer or half-integer $d_0$ such that the sequence $(g_{\lambda+d\alpha,\mu+d\beta,\nu+d\gamma})_d$ is constant for all integers strictly greater than $d_0$. We now want to prove that, if this $d_0$ is an integer, this sequence of Kronecker coefficients already stabilises for our bound $d_0$. We need at first, in the following subsection, to expose a well-known result of quasipolynomiality.

\subsubsection{Piecewise quasipolynomial behaviour of the dimension of invariants in an irreducible representation}\label{section_quasipolynomial}

This part of the paper is quite disconnected with the others. The inspiration for the proofs given here is the article \cite{kumar-prasad}, in which the case of $T$-invariants is studied. Note also that the quasipolynomial behaviour of this kind of multiplicities can be seen as a consequence of the work of E. Meinrenken and R. Sjamaar on $[Q,R]=0$ (it is explained in Section 13 of \cite{paradan-vergne}). The following settings concern this subsection and only this one.

\vspace{5mm}

Let $G$ be a complex reductive group, and $H$ be a subgroup of $G$, also reductive. We consider a maximal torus $T$, a Borel subgroup $B$ of $G$ such that $T\subset B$, and the corresponding flag variety $X=G/B$. We denote by $X^*(T)$ the (multiplicative) group of characters of $T$, and by $Q$ and $\Lambda$ respectively the root lattice and the weight lattice. $\Lambda^+$ (resp. $\Lambda^{++}$) denotes the dominant (resp. dominant regular) weights.

\vspace{5mm}

Let us recall that $X^*(T)$ can be embedded as a sublattice of $\Lambda$ (let set $\iota:X^*(T)\hookrightarrow\Lambda$) and that
\[ Q\subset\iota(X^*(T))\subset\Lambda. \]
Set $X^*(T)^+=\iota(X^*(T))\cap\Lambda^+$, and
\[ \begin{array}{rccl}
m: & X^*(T)^+ & \longrightarrow & \N\\
 & \lambda & \longmapsto & \dim V(\lambda)^H
\end{array}, \]
where $V(\lambda)$ is the irreducible $G$-module with highest weight $\lambda$. The result we want to show is that $m$ is piecewise quasipolynomial. For a more precise statement, let us consider $X^*(\R)=\iota(X^*(T))\otimes_{\Z}\R$, $X^*(\R)^+$ the cone spanned by $X^*(T)^+$, and $X^*(\R)^{++}$ the relative interior of this cone.\\
Here we use the more standard definition of semi-stability: if $\Lb$ is a $H$-linearised line bundle over $X$, a point $x\in X$ is said semi-stable (with respect to $\Lb$) if there exist $n\in\N^*$ and $\sigma\in\h^0(X,\Lb^{\otimes n})^H$ such that $\{y\in X\text{ s.t. }\sigma(y)\neq 0\}$ is affine and contains $x$. To avoid confusion with the notion of semi-stability that we use everywhere but in this subsection, we denote by $X^{ss}_{\mathrm{st}}(\Lb)$ the set of these semi-stable points with respect to $\Lb$. Finally let us denote by $C_1,\dots,C_N$ the chambers in $X^*(\R)^{++}$, i.e. the GIT-classes of maximal dimension. Let us recall that the chambers are the relative interiors of convex rational polyhedral cones in $X^*(\R)^{++}$ (see \cite{ressayre2}). For all $k$, denote by $X^{ss}_{\mathrm{st}}(C_k)$ the set of semi-stable points common to all $\Lb_\lambda$ for $\lambda\in C_k$.

\begin{lemma}\label{lemma_lattice}
There exists a sublattice $\Gamma$ of $\iota(X^*(T))$ of finite index such that, for all $k\in\llbracket 1,N\rrbracket$, for all $\lambda\in\Gamma$, the $H$-linearised line bundle $\Lb_\lambda=G\times_B \C_{-\lambda}$ descends to a line bundle on $X^{ss}_{\mathrm{st}}(C_k)\sslash H$ (i.e. the restriction of $\Lb_\lambda$ to $X^{ss}_{\mathrm{st}}(C_k)$ is $H$-isomorphic to the pull-back of a line bundle on $X^{ss}_{\mathrm{st}}(C_k)\sslash H$).
\end{lemma}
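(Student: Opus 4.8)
Let me think about what Lemma \ref{lemma_lattice} is really asking. We have a reductive group $H \subset G$ acting on the flag variety $X = G/B$, and for each dominant weight $\lambda$ we have a line bundle $\Lb_\lambda = G \times_B \C_{-\lambda}$. The GIT-chambers $C_1, \dots, C_N$ partition the dominant regular cone $X^*(\R)^{++}$, and within a fixed chamber $C_k$ all weights give the same semistable locus $X^{ss}_{st}(C_k)$. We want a finite-index sublattice $\Gamma$ such that for every $\lambda \in \Gamma$ and every chamber $C_k$, the restriction of $\Lb_\lambda$ to $X^{ss}_{st}(C_k)$ descends to the GIT quotient $X^{ss}_{st}(C_k) \sslash H$.

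**The descent criterion.** The key tool here is Kempf's descent lemma (or Drezet–Narasimhan descent): a $H$-linearized line bundle on $X^{ss}_{st}(C_k)$ descends to the quotient if and only if, for every point $x$ in $X^{ss}_{st}(C_k)$ whose orbit $H \cdot x$ is closed in $X^{ss}_{st}(C_k)$, the stabilizer $H_x$ acts trivially on the fiber of the line bundle at $x$. So the entire problem reduces to controlling the characters by which the stabilizers of the closed orbits act on the fibers of $\Lb_\lambda$.

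**My approach.** The plan is as follows. First I would fix a chamber $C_k$ and list the finitely many closed orbits in $X^{ss}_{st}(C_k)$ — there are finitely many because the quotient $X^{ss}_{st}(C_k) \sslash H$ parametrizes them and is a projective (hence quasi-compact) variety, and in fact the closed-orbit types are constant across the chamber. For a closed orbit through a point $x$, the stabilizer $H_x$ is a reductive subgroup, and the action of $H_x$ on the fiber $(\Lb_\lambda)_x$ is given by a character of $H_x$. Crucially, this character depends on $\lambda$ linearly: the assignment $\lambda \mapsto (\text{character of } H_x \text{ on } (\Lb_\lambda)_x)$ factors through a $\Z$-linear map $\rho_{k,x}: \iota(X^*(T)) \to X^*(H_x)$. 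The descent condition at $x$ is precisely that $\rho_{k,x}(\lambda) = 0$, i.e. that $\lambda$ lies in $\ker \rho_{k,x}$.

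**Constructing $\Gamma$ and the main obstacle.** Now $\ker \rho_{k,x}$ is a sublattice of $\iota(X^*(T))$, but in general it need not have finite index — it could even be everything or could be of infinite index if the character is nontrivial on a positive-dimensional piece. So I cannot just intersect the kernels. The essential observation that rescues the argument is that the character $\rho_{k,x}$ restricted to the \emph{connected center} (or the relevant torus directions) must in fact vanish because $x$ is a semistable point: the numerical criterion forces the one-parameter subgroups of $H_x$ to pair to zero against $\lambda$ up to the ambiguity of the chamber, meaning $\rho_{k,x}(\lambda)$ is torsion as an element of $X^*(H_x)$ for all $\lambda$ — or at least its restriction to the identity component of $H_x$ is a \emph{torsion} character. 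Since $X^*(H_x)$ is a finitely generated abelian group, its torsion subgroup is finite, and the preimage $\rho_{k,x}^{-1}(\text{torsion})$ contains a finite-index sublattice $\Gamma_{k,x}$ on which $\rho_{k,x}$ lands in the torsion subgroup; multiplying $\lambda$ by the (finite) exponent then makes $\rho_{k,x}$ vanish. Concretely I would take, for each pair $(k,x)$, the finite-index sublattice $\Gamma_{k,x} = \{\lambda : \rho_{k,x}(\lambda) = 0 \text{ on } (H_x)^\circ\}$ scaled so that the finite-order obstruction on component groups also dies, and then set $\Gamma = \bigcap_{k,x} \Gamma_{k,x}$. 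A finite intersection of finite-index sublattices is again of finite index, so $\Gamma$ is as required. The main obstacle — and the step I would verify most carefully — is establishing that $\rho_{k,x}$ is torsion-valued: this is where semistability of $x$ with respect to $C_k$ enters, via the Hilbert–Mumford criterion stated earlier, which forces $\langle \chi_i(\Lb_\lambda), \tau \rangle$ to be suitably balanced for destabilizing $\tau \in X_*(H_x)$, so that no one-parameter subgroup of $H_x$ pairs nontrivially with $\lambda$. Once that torsion statement is in hand, the finiteness of the index and the finiteness of the set of closed-orbit types reduce the lemma to a purely lattice-theoretic bookkeeping.
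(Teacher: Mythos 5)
Your reduction to Kempf's descent criterion is a legitimate alternative strategy, and the step you flag as the ``main obstacle'' is actually sound: for $x$ with closed orbit in $X^{ss}_{\mathrm{st}}(C_k)$ the stabiliser $H_x$ is reductive, and for any $\lambda\in\overline{C_k}\cap\iota(X^*(T))$ the Hilbert--Mumford criterion applied to $\tau$ and $\tau^{-1}$ for every one-parameter subgroup $\tau$ of $H_x$ forces the fibre character to pair to zero with all of $X_*(H_x)$, hence to vanish on $H_x^\circ$; since $C_k$ is full-dimensional, linearity of $\lambda\mapsto\rho_{k,x}(\lambda)$ then gives vanishing on a finite-index sublattice. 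The genuine gap is elsewhere, in your finiteness claim: it is simply false that $X^{ss}_{\mathrm{st}}(C_k)$ contains finitely many closed orbits ``because the quotient is projective (hence quasi-compact)''. The quotient $X^{ss}_{\mathrm{st}}(C_k)\sslash H$ parametrises the closed orbits \emph{bijectively}, and it is in general a positive-dimensional variety, so there are infinitely many closed orbits; quasi-compactness bounds open covers, not the number of points. Consequently your $\Gamma=\bigcap_{k,x}\Gamma_{k,x}$ is an \emph{infinite} intersection of finite-index sublattices, which need not have finite index, and your rescaling by the exponent of the component group $H_x/H_x^\circ$ requires a uniform bound over infinitely many points, which you have not provided. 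Repairing this needs a real additional ingredient, e.g.\ Luna's stratification: there are only finitely many conjugacy classes of stabilisers of closed orbits, the strata have finitely many connected components, and the character $\rho_{k,x}$ is locally constant along each stratum, so only finitely many distinct descent conditions occur. None of this is in your proposal.

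For comparison, the paper sidesteps closed orbits and stabiliser conjugacy classes entirely. For each $\lambda$ in the closed chamber it uses finite generation of the invariant section ring $\bigoplus_{n\geq 0}\h^0(X,\Lb_\lambda^{\otimes n})^H$ to produce a single power $n_\lambda$ such that every semistable point admits a nonvanishing invariant section of $\Lb_\lambda^{\otimes n_\lambda}$; the identity $\chi(h)\sigma_0(x)=\sigma_0(h.x)=\sigma_0(x)$ then shows that \emph{every} stabiliser (not just those of closed orbits) acts trivially on the fibre, so $\Lb_{n_\lambda\lambda}$ descends by Kempf's lemma. Uniformity across the chamber comes from the Hilbert basis of the semigroup $\overline{C_k}\cap\iota(X^*(T))$ (a finiteness statement about rational polyhedral cones, replacing the orbit-finiteness you would need), and the extension from the semigroup $\Gamma\cap C_k$ to the full lattice $\Gamma$ is done by a translation trick: each generator $\gamma_i$ of $\Gamma$ is written as $(\lambda+\gamma_i)-\lambda$ with $\lambda$ chosen deep enough inside the open cone $C_k$, the descended bundles being defined for differences via $\hat{\Lb}_{\lambda_1}^{(k)}\otimes\bigl(\hat{\Lb}_{\lambda_2}^{(k)}\bigr)^*$. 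Your approach could be completed along the Luna-stratification lines sketched above, but as written the finiteness step fails.
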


\begin{proof} For better readability we have divided this proof into four steps.\\
\underline{First step:} we want to prove that, for all $k\in\llbracket 1,N\rrbracket$, there exists a sublattice $\Gamma_k$ of $\iota(X^*(T))$ of finite index such that, for all $\lambda\in\Gamma_k\cap C_k^\ell$ (where $C_k^\ell=C_k\cap\iota(X^*(T))$), $\Lb_\lambda$ descends to $X^{ss}_{\mathrm{st}}(C_k)\sslash H$.\\
Let $k\in\llbracket 1,N\rrbracket$. We set
\[ A_k=\{\lambda\in C_k^\ell\text{ s.t. }\Lb_\lambda\text{ descends to }X^{ss}_{\mathrm{st}}(C_k)\sslash H\}. \]
Then it is clear that $A_k$ is stable by addition. Thus consider $\Gamma_k$ the lattice generated by $A_k$. It satisfies $A_k=\Gamma_k\cap C_k^\ell$ and so, for all $\lambda\in\Gamma_k\cap C_k^\ell$, $\Lb_\lambda$ descends to $X^{ss}_{\mathrm{st}}(C_k)\sslash H$.\\
Let us now check that $\Gamma_k$ is of finite index in $\iota(X^*(T))$. It suffices to prove that there exists $n\in\N^*$ such that, for all $\lambda\in C_k^\ell$, $n\lambda\in\Gamma_k$, i.e. $\Lb_{n\lambda}\simeq\Lb_\lambda^{\otimes n}$ descends to $X^{ss}_{\mathrm{st}}(C_k)\sslash H$.\\
For all $\lambda\in\overline{C_k^\ell}=\overline{C_k}\cap\iota(X^*(T))$ and $x\in X^{ss}_{\mathrm{st}}(C_k)\subset X^{ss}_{\mathrm{st}}(\Lb_\lambda)$, by definition we know that there exist $n_{x,\lambda}\in\N^*$ and $\sigma_{x,\lambda}\in\h^0(X,\Lb_\lambda^{\otimes n_{x,\lambda}})^H$ such that $\sigma_{x,\lambda}(x)\neq 0$. Let $\lambda\in C_k^\ell$. Then the algebra
\[ R=\bigoplus_{n\geq 0}\h^0(X,\Lb_\lambda^{\otimes n})^H \]
is of finite type. Let us set $\sigma_1,\dots,\sigma_r$ a system of generators of $R$ (we can choose $\sigma_i\in\h^0(X,\Lb_\lambda^{\otimes n_i})^H$ for some $n_i\in\N^*$). Write $n_\lambda=\prod_{i=1}^r n_i\in\N^*$. Then, for $x\in X^{ss}_{\mathrm{st}}(C_k)$, there exists
\[ \sigma_{x,\lambda}=\sigma_1^{\otimes a_1}\otimes\dots\otimes\sigma_r^{\otimes a_r} \]
with $a_1,\dots,a_r\in\N$ not all zero such that $\sigma_{x,\lambda}(x)\neq 0$. So there exists $i\in\llbracket 1,r\rrbracket$ such that $\sigma_i(x)\neq 0$. Hence $\sigma_i^{\otimes n_1\dots n_{i-1}n_{i+1}\dots n_r}(x)\neq 0$ with $\sigma_i^{\otimes n_1\dots n_{i-1}n_{i+1}\dots n_r}=\sigma_0\in\h^0(X,\Lb_\lambda^{\otimes n_\lambda})^H$.\\
Thus, if we denote by $\chi$ the character by which $H_x$ acts on the fiber $\left(\Lb_\lambda^{\otimes n_\lambda}\right)_x$, we have
\[ \forall h\in H_x, \: \chi(h)\sigma_0(x)=h.\sigma_0(x)=\sigma_0(h.x)=\sigma_0(x), \]
and so $\chi$ is trivial. We have just proven that, for all $x\in X^{ss}_{\mathrm{st}}(C_k)$, $H_x$ acts trivially on $\left(\Lb_\lambda^{\otimes n_\lambda}\right)_x$. In other words, by Kempf's Descent Lemma (see e.g. Lemma 3.8 in \cite{kumar}), $\Lb_{n_\lambda\lambda}$ descends to $X^{ss}_{\mathrm{st}}(C_k)$, i.e. $n_\lambda\lambda\in\Gamma_k$.\\
Now, $\overline{C_k^\ell}$ is finitely generated, since it is the intersection of a lattice and a closed convex rational polyhedral cone (see e.g. Section 5.18 from \cite{schrijver} on Hilbert bases). So if we take $\lambda_1,\dots,\lambda_p$ generators, by setting $n=\prod_{i=1}^p n_{\lambda_i}\in\N^*$ we get:
\[ \forall\lambda\in C_k^\ell, \: n\lambda\in\Gamma_k. \]
Thus $\Gamma_k$ is of finite index in $\iota(X^*(T))$.

\vspace{5mm}

\underline{Second step:} Now we set
\[ \Gamma=\bigcap_{k=1}^N\Gamma_k. \]
It is a sublattice of $\iota(X^*(T))$ of finite index, since $\Gamma_1,\dots,\Gamma_N$ are. Moreover, for all $k\in\llbracket 1,N\rrbracket$, for all $\lambda\in\Gamma\cap C_k\subset\Gamma_k\cap C_k$, $\Lb_\lambda$ descends to a line bundle denoted $\hat{\Lb}_\lambda^{(k)}$ on $X^{ss}_{\mathrm{st}}(C_k)$.

\vspace{5mm}

\underline{Third step:} Let $k\in\llbracket 1,N\rrbracket$. We can notice that $\Gamma\cap C_k$ is a semigroup: it is the intersection between a lattice and the interior of a convex rational polyhedral cone. Let us consider $Z_k$ the subgroup of $\Gamma$ generated by $\Gamma\cap C_k$. Let $\lambda\in Z_k$. It can be written as $\lambda_1-\lambda_2$, with $\lambda_1,\lambda_2\in\Gamma\cap C_k$. Then we define
\[ \hat{\Lb}_\lambda^{(k)}=\hat{\Lb}_{\lambda_1}^{(k)}\otimes \left(\hat{\Lb}_{\lambda_2}^{(k)} \right)^*, \]
which is a line bundle over $X^{ss}_{\mathrm{st}}(C_k)\sslash H$. If $\lambda=\lambda'_1-\lambda'_2$ also ($\lambda'_1,\lambda'_2\in\Gamma\cap C_k$), then $\lambda_1+\lambda'_2=\lambda'_1+\lambda_2\in\Gamma\cap C_k$ and so $\hat{\Lb}_{\lambda_1+\lambda'_2}^{(k)}\simeq \hat{\Lb}_{\lambda'_1+\lambda_2}^{(k)}$. Moreover, by uniqueness of the line bundle to which a line bundle can descend (cf. \cite{teleman}, §3), $\hat{\Lb}_{\lambda_1+\lambda'_2}^{(k)}\simeq\hat{\Lb}_{\lambda_1}^{(k)}\otimes \hat{\Lb}_{\lambda'_2}^{(k)}$, and similarly for $\hat{\Lb}_{\lambda'_1+\lambda_2}^{(k)}$. Thus,
\[ \hat{\Lb}_{\lambda_1}^{(k)}\otimes\left(\hat{\Lb}_{\lambda_2}^{(k)} \right)^*\simeq\hat{\Lb}_{\lambda'_1}^{(k)}\otimes \left(\hat{\Lb}_{\lambda'_2}^{(k)} \right)^*, \]
and our $\hat{\Lb}_\lambda^{(k)}$ is well defined. As a consequence $\Lb_\lambda$ descends to a line bundle on $X^{ss}_{\mathrm{st}}(C_k)\sslash H$ for all $\lambda\in Z_k$.

\vspace{5mm}

\underline{Fourth step:} To conclude, let us prove that $Z_k=\Gamma$. We consider $\gamma_1,\dots,\gamma_r$ a system of generators of $\Gamma$, and a norm $\|.\|$ on $X^*(\R)$. Set $d=\max\{\|\gamma_i\|\,;\,i\in\llbracket 1,r\rrbracket\}$. Then there exists $\lambda\in\Gamma\cap C_k$ such that $B(\lambda,d)$, the closed ball of center $\lambda$ and radius $d$, is contained in $C_k$. Hence $\lambda+\gamma_i\in B(\lambda,d)\subset C_k$ for all $i\in\llbracket 1,r\rrbracket$.\\
So, for all $i$, $\lambda+\gamma_i\in\Gamma\cap C_k$, and thus $\gamma_i\in Z_k$. Hence $Z_k=\Gamma$, which proves the lemma.
\end{proof}

The following result is then a classical one. The proof we write here is an adaptation (but with less quantitative results) from the one by Kumar and Prasad in \cite{kumar-prasad}, which was in the case of $T$-invariants.

\begin{theo}\label{theo_quasipolynomial}
Let $\bar{\mu}=\mu+\Gamma$ be a coset of $\Gamma$ in $\iota(X^*(T))$ and $k\in\llbracket 1,N\rrbracket$. Then there exists a polynomial $f_{\bar{\mu},k}:X^*(\R)\rightarrow\R$ with rational coefficients such that,
\[ \forall\lambda\in\overline{C_k}\cap\bar{\mu}, \: m(\lambda)=f_{\bar{\mu},k}(\lambda). \]
\end{theo}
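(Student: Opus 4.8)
The plan is to convert the representation-theoretic multiplicity $m(\lambda)$ into the dimension of a space of sections of a line bundle on a \emph{fixed} projective quotient, and then to feed this into Riemann--Roch. First, by Borel--Weil (as recalled in Section~\ref{link_with_line_bundles}) together with the exactness of the functor of $H$-invariants for the reductive group $H$, one has $m(\lambda)=\dim V(\lambda)^H=\dim\h^0(X,\Lb_\lambda)^H$ for every $\lambda\in X^*(T)^+$. Fix the coset $\bar{\mu}$ and the chamber $C_k$, and set $Y_k=X^{ss}_{\mathrm{st}}(C_k)\sslash H$, which is projective since $X$ is. For $\lambda\in\Gamma\cap C_k$, Lemma~\ref{lemma_lattice} provides the descent $\hat{\Lb}_\lambda^{(k)}$ of $\Lb_\lambda$ to $Y_k$; and since the quotient map $X^{ss}_{\mathrm{st}}(C_k)\to Y_k$ is a good, hence affine, quotient, the projection formula and $(p_*\mathcal{O})^H=\mathcal{O}_{Y_k}$ identify invariant sections upstairs with sections of the descended bundle: $\h^0(X^{ss}_{\mathrm{st}}(C_k),\Lb_\lambda)^H\simeq\h^0(Y_k,\hat{\Lb}_\lambda^{(k)})$.

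Second, for $\lambda$ in the \emph{open} chamber $C_k$ the locus $X^{ss}_{\mathrm{st}}(\Lb_\lambda)$ equals $X^{ss}_{\mathrm{st}}(C_k)$ by the very definition of a GIT-chamber, so Proposition~\ref{prop_teleman} (in its standard-semistability form) gives $\h^0(X,\Lb_\lambda)^H\simeq\h^0(X^{ss}_{\mathrm{st}}(C_k),\Lb_\lambda)^H$; combining with the previous step yields $m(\lambda)=\dim\h^0(Y_k,\hat{\Lb}_\lambda^{(k)})$ for all $\lambda\in\Gamma\cap C_k$. Now the assignment $\lambda\mapsto\hat{\Lb}_\lambda^{(k)}$ is the group homomorphism $\Gamma\to\pic(Y_k)$ built in the third step of Lemma~\ref{lemma_lattice}, so the first Chern class of $\hat{\Lb}_\lambda^{(k)}$ is linear in $\lambda$. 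By the theory of Hilbert polynomials on the projective variety $Y_k$ (Snapper's lemma, or Riemann--Roch), the Euler characteristic $\lambda\mapsto\chi(Y_k,\hat{\Lb}_\lambda^{(k)})$ therefore agrees, on the coset $\bar{\mu}\cap\Gamma$, with a polynomial $f_{\bar{\mu},k}$ of degree $\dim Y_k$ with rational coefficients (rationality because it takes integer values on a full-rank lattice coset). It then remains to identify $m(\lambda)$ with this Euler characteristic.

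This identification is the crux, and I expect it to split into the two main obstacles. The first is the vanishing of $\h^{>0}(Y_k,\hat{\Lb}_\lambda^{(k)})$ for $\lambda\in C_k$, needed to pass from $\dim\h^0$ to $\chi$: here one combines the ampleness of $\hat{\Lb}_\lambda^{(k)}$ for interior $\lambda$, the rational singularities of $Y_k$ (Boutot's theorem, $Y_k$ being a good quotient of a smooth variety), and the vanishing $\h^{>0}(X,\Lb_\lambda)=0$ upstairs (Borel--Weil--Bott for dominant $\lambda$, plus exactness of invariants); the delicate point is to reconcile cohomology on the open locus $X^{ss}_{\mathrm{st}}(C_k)$ with cohomology on all of $X$, whose unstable complement may have codimension one, as already in Section~\ref{example_murnaghan}. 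The second obstacle is the passage from $C_k$ to its closure $\overline{C_k}$: on a wall the descended bundle is only semi-ample and the semistable locus grows, so one must show that $m$ still matches $f_{\bar{\mu},k}$ there. A clean route is to observe that $\bigoplus_{\lambda}\h^0(X,\Lb_\lambda)^H$ is the ring of $H$-invariants of the (finitely generated) multihomogeneous coordinate ring of $X$, hence itself finitely generated and multigraded; its Hilbert function is then piecewise quasipolynomial with the \emph{closed} GIT-chambers as domains of polynomiality, and the polynomial on $\overline{C_k}$ is forced to be $f_{\bar{\mu},k}$ by density of $C_k\cap\bar{\mu}$ in the affine span of $\overline{C_k}$.

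Finally, assembling the finitely many chambers $C_1,\dots,C_N$ and the finitely many cosets of the finite-index sublattice $\Gamma$ furnished by Lemma~\ref{lemma_lattice} produces the full piecewise-quasipolynomial description of $m$, with $f_{\bar{\mu},k}$ the polynomial attached to the pair $(\bar{\mu},k)$, as asserted.
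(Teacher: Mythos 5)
Your global strategy---turn $m(\lambda)$ into an Euler characteristic of a sheaf on the fixed quotient $Y_k=X^{ss}_{\mathrm{st}}(C_k)\sslash H$ and invoke Riemann--Roch/Snapper---is the same as the paper's, but three steps of your argument have genuine gaps, and the first is an outright failure of the construction. Lemma \ref{lemma_lattice} produces descended line bundles $\hat{\Lb}^{(k)}_\lambda$ \emph{only} for $\lambda\in\Gamma$. For $\lambda$ in a nontrivial coset $\bar{\mu}=\mu+\Gamma$ (which is the actual content of the theorem, and the case needed later in the paper, where the coset $\bar{\gamma}_0=(\Mb+d_0\Lb)+\Gamma$ has no reason to be $\Gamma$ itself), the bundle $\Lb_\lambda$ does not descend to $Y_k$, so your object $\chi(Y_k,\hat{\Lb}^{(k)}_\lambda)$ is undefined and your set $\bar{\mu}\cap\Gamma$ is empty. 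The paper's way around this is to work with the $H$-invariant direct image $\pi_*(\Lb_\mu)$, which is a coherent sheaf (not a line bundle) on $Y_k$, to write $\pi_*(\Lb_\lambda)\simeq\pi_*(\Lb_\mu)\otimes\hat{\Lb}^{(k)}_{\lambda-\mu}$ by the projection formula, and to apply Riemann--Roch for singular varieties (\cite{fulton2}, Theorem 18.3) to this twisted sheaf: polynomiality in the coordinates of $\lambda-\mu$ then comes from expanding the Chern character against the fixed class $\tau(\pi_*(\Lb_\mu))$.

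The two obstacles you name are real, but your proposed resolutions do not work, and both in fact require the same missing tool: Teleman's quantization theorem. For the interior of the chamber, ampleness of $\hat{\Lb}^{(k)}_\lambda$ only gives Serre vanishing for large tensor powers, Boutot's theorem gives rational singularities but no Kodaira-type vanishing for an arbitrary ample bundle on the singular variety $Y_k$, and the vanishing $\h^{>0}(X,\Lb_\lambda)=0$ does not restrict to the open locus $X^{ss}_{\mathrm{st}}(C_k)$ precisely because the unstable locus can have codimension one (as you yourself note); what closes the gap is \cite{teleman}, Remark 3.3(i), which states directly that $\h^p\bigl(Y_k,\pi_*(\Lb_\lambda)\bigr)$ vanishes for $p>0$ and equals $\h^0(X,\Lb_\lambda)^H$ for $p=0$. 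For the walls, your ``clean route'' is circular: the assertion that the Hilbert function of the multigraded invariant ring is piecewise quasipolynomial with the \emph{closed} GIT chambers as domains of polynomiality is essentially the theorem being proved, and density of $C_k\cap\bar{\mu}$ cannot force anything until one knows that $m$ restricted to $\overline{C_k}\cap\bar{\mu}$ agrees with \emph{some} polynomial---the whole difficulty is that a wall lies in the closure of several chambers, so one must show the value of $m$ there matches the polynomial attached to $C_k$. The paper proves this by descending $\Lb_\lambda$ to an ample bundle $\Lb_\lambda^P$ on $G/P$, perturbing by $\varepsilon\Lb_\nu$ with $\nu\in C_k$ and $\varepsilon>0$ small and rational, and applying \cite{teleman}, Remark 3.3(ii), to get $\h^p\bigl(Y_k,\pi_*(\Lb_\lambda)\bigr)\simeq\h^p(X,\Lb_\lambda)^H$ for all $p$, the right-hand side vanishing for $p>0$ by Borel--Weil--Bott; without this perturbation argument (or the Meinrenken--Sjamaar machinery) the extension from $C_k$ to $\overline{C_k}$ is unproved.
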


\begin{proof}
Let $\bar{\mu}$ and $k$ be as in the above statement. Applying the Borel-Weil-Bott's Theorem we get that, for all $\lambda\in X^*(T)^+$, $\h^0(X,\Lb_\lambda)\simeq V(\lambda)^*$ and, for all $p>0$, $\h^p(X,\Lb_\lambda)=\{0\}$. As a consequence, since $\dim\left(V(\lambda)^H\right)=\dim\left(\left(V(\lambda)^*\right)^H\right)$,
\[ m(\lambda)=\dim\left(\h^0(X,\Lb_\lambda)^H\right). \]

\vspace{5mm}

Let us begin by considering $\lambda\in C_k\cap\bar{\mu}$. Denote by $\pi$ the standard quotient map $X^{ss}_{\mathrm{st}}(C_k)\rightarrow X^{ss}_{\mathrm{st}}(C_k)\sslash H$ and, for any $H$-equivariant sheaf $\mathcal{S}$ on $X^{ss}_{\mathrm{st}}(C_k)$, by $\pi_*(\mathcal{S})$ the $H$-invariant direct image sheaf of $\mathcal{S}$ by $\pi$ (it is then a sheaf on the GIT-quotient).\\
Then, by \cite{teleman}, Remark 3.3(i),
\[ \h^p(X^{ss}_{\mathrm{st}}(C_k)\sslash H,\pi_*(\Lb_\lambda))\simeq\left\lbrace\begin{array}{ll}
\{0\} & \text{if }p>0\\
\h^0(X,\Lb_\lambda)^H & \text{if }p=0
\end{array}\right.. \]
And thus, if $\chi$ is the Euler-Poincaré characteristic,
\[ \chi(X^{ss}_{\mathrm{st}}(C_k)\sslash H,\pi_*(\Lb_\lambda))=\sum_{p\geq 0}(-1)^p\dim\left(\h^p(X^{ss}_{\mathrm{st}}(C_k)\sslash H,\pi_*(\Lb_\lambda))\right)=m(\lambda). \]

\vspace{5mm}

Take now $\lambda\in\overline{C_k}\cap\bar{\mu}$. We consider $P$ (containing $B$) the unique parabolic subgroup of $G$ such that $\Lb_\lambda$ descends as an ample line bundle $\Lb_\lambda^P$ on $G/P$ via the standard projection $q:X=G/B\rightarrow G/P$. Let $\nu\in C_k\cap\iota(X^*(T))$.\\
Then, by \cite{teleman}, §1.2, for any small enough rational $\varepsilon>0$, the pull-back $q^*(\Lb_\lambda^P)$ is adapted to the stratification on $X$ coming from $q^*(\Lb_\lambda^P)+\varepsilon\Lb_\nu$. So, by \cite{teleman}, Remark 3.3(ii), 
\[ \forall p\in\N, \quad \h^p\left(X^{ss}_{\mathrm{st}}(q^*(\Lb_\lambda^P)+\varepsilon\Lb_\nu)\sslash H,\pi_*(q^*(\Lb_\lambda^P))\right)\simeq\h^p(X,q^*(\Lb_\lambda^P))^H. \]
Moreover, $q^*(\Lb_\lambda^P)=\Lb_\lambda$ and $X^{ss}_{\mathrm{st}}(q^*(\Lb_\lambda^P)+\varepsilon\Lb_\nu)=X^{ss}_{\mathrm{st}}(\Lb_{\lambda+\varepsilon\nu})=X^{ss}_{\mathrm{st}}(C_k)$ (because $\lambda+\varepsilon\nu\in C_k$ if $\varepsilon$ is small enough), and thus
\[ \forall p\in\N, \quad \h^p(X^{ss}_{\mathrm{st}}(C_k)\sslash H,\pi_*(\Lb_\lambda))\simeq\h^p(X,\Lb_\lambda)^H. \]
Consequently we have once again
\[ m(\lambda)=\chi(X^{ss}_{\mathrm{st}}(C_k)\sslash H,\pi_*(\Lb_\lambda)). \]

\vspace{5mm}

We now introduce a $\Z$-basis $(\gamma_1,\dots,\gamma_r)$ of the lattice $\Gamma$. For any $\lambda=\mu+\sum_{i=1}^r a_i\gamma_i\in\overline{C_k}\cap\bar{\mu}$ (i.e. with $a_1,\dots,a_r\in\Z$),
\[ \pi_*(\Lb_\lambda)\simeq\pi_*(\Lb_\mu)\otimes\hat{\Lb}_{a_1\gamma_1+\dots+ a_r\gamma_r}^{(k)} \]
by definition of the lattice $\Gamma$ and the projection formula for $\pi_*$, and with the notation $\hat{\Lb}$ defined in the proof of Lemma \ref{lemma_lattice}. Finally, for any such $\lambda$ we apply the Riemann-Roch Theorem for singular varieties (see e.g. \cite{fulton2}, Theorem 18.3), to the sheaf $\pi_*(\Lb_\lambda)$ and get
\[ \begin{array}{rcl}
m(\lambda) & = & \chi(X^{ss}_{\mathrm{st}}(C_k)\sslash H,\pi_*(\Lb_\lambda))\\
 & = & \dps\sum_{n\geq 0}\dps\int_{X^{ss}_{\mathrm{st}}(C_k)\sslash H}\dps\frac{(a_1c_1(\gamma_1)+\dots+a_rc_1(\gamma_r))^n}{n!}\cap\tau(\pi_*(\Lb_\mu)),
\end{array} \]
where, for all $i$, $c_1(\gamma_i)$ is the first Chern class of the line bundle $\hat{\Lb}_{\gamma_i}^{(k)}$, and $\tau(\pi_*(\Lb_\mu))$ is a certain class in the Chow group $A_*(X^{ss}_{\mathrm{st}}(C_k)\sslash H)\otimes_\Z \Q$. Hence $m(\lambda)$ is a polynomial with rational coefficients in the variables $a_i$.
\end{proof}

\subsubsection[Improvement of the bounds of Sections 3.2 and 3.3]{Improvement of the bounds of Sections \ref{example_murnaghan} and \ref{other_example}}

We now come back to the notations of Section \ref{general_section}.

\begin{prop}
If $d_0\in\N$ is such that, for all $d\in\Q$ such that $d>d_0$, $X^{ss}(\Mb+d\Lb)\subset X^{ss}(\Lb)$, then
\[ \dim\left(\h^0(X,\Mb+d_0\Lb)^G\right)=\dim\left(\h^0(S,\Mb)^H\right). \]
\end{prop}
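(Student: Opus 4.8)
The plan is to show that the common value $c:=\dim\h^0(S,\Mb)^H$ of the coefficients for integers $d>d_0$ is already attained at $d=d_0$, by realising the function $d\mapsto\dim\h^0(X,\Mb+d\Lb)^G$ as a single polynomial near $d_0$ (along a fixed arithmetic progression) and proving that this polynomial is constant. First I would record the input from the integer case: since $X^{ss}(\Mb+d\Lb)\subset X^{ss}(\Lb)$ for every rational $d>d_0$, it holds in particular for all integers $d\geq d_0+1$, and Proposition \ref{inclusion_of_ss_points_suffices} gives $\dim\h^0(X,\Mb+d\Lb)^G=c$ for all such $d$. Writing $w_d$ for the weight with $\Lb_{w_d}=\Mb+d\Lb$ on $X=\tilde G/\tilde B$, Borel--Weil--Bott identifies $\dim\h^0(X,\Mb+d\Lb)^G$ with the quantity $m(w_d)$ of Section \ref{section_quasipolynomial} (the subgroup there being our $G$), so the goal becomes $m(w_{d_0})=c$. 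Note that Proposition \ref{inclusion_of_ss_points_suffices} cannot be applied directly at $d_0$, since the inclusion is only assumed for $d>d_0$ and may genuinely fail at $d_0$.

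Next I would bring in the quasipolynomiality of $m$. Let $C_k$ be the GIT-chamber containing $w_d$ for $d$ slightly larger than $d_0$; since the inclusion holds for all rational $d>d_0$, letting $d\downarrow d_0$ shows $w_{d_0}\in\overline{C_k}$. Fixing the coset $\bar\mu$ of $\Gamma$ containing $w_{d_0}$, Theorem \ref{theo_quasipolynomial} provides a polynomial $f_{\bar\mu,k}$ with $m=f_{\bar\mu,k}$ on $\overline{C_k}\cap\bar\mu$, computed in its proof as $f_{\bar\mu,k}(w)=\chi\big(X^{ss}_{\mathrm{st}}(C_k)\sslash G,\pi_*\Lb_w\big)$. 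The crucial observation is that $\Lb$ is constant along the ray in this formula: by weak stability the invariant section $\sigma_0\in\h^0(X,\Lb)^G$ vanishes nowhere on $X^{ss}(\Lb)$, so $\Lb$ is $G$-equivariantly trivial there, hence on $X^{ss}_{\mathrm{st}}(C_k)\subset X^{ss}(\Lb)$, and therefore descends to the trivial bundle on the quotient. By the projection formula $\pi_*\Lb_{w_d}\simeq\pi_*\Lb_{w_{d_0}}$ for all $d$, whence $f_{\bar\mu,k}(w_d)=f_{\bar\mu,k}(w_{d_0})$ is independent of $d$.

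It then remains to identify this constant with $c$, and this is where the main difficulty lies: a priori the ray $\{w_d:d>d_0\}$ may cross finitely many walls of the GIT-fan before reaching the unbounded chamber $C_{k_\infty}$ containing $w_d$ for all large integer $d$. I would handle this by gluing across walls: the same triviality argument shows that on every chamber met for $d>d_0$ (each of whose semistable locus lies in $X^{ss}(\Lb)$) the associated polynomial is constant along the ray, and the polynomials of two chambers adjacent along a wall agree on that wall, since both equal $m$ on its Zariski-dense lattice points in $\bar\mu$. Chaining these equalities along the ray gives $f_{\bar\mu,k}(w_{d_0})=f_{\bar\mu,k_\infty}(w_{d_0})$; evaluating the latter at a large integer $d$ congruent to $d_0$ modulo the index of $\Gamma$, where $m(w_d)=c$ by the first paragraph, yields $f_{\bar\mu,k_\infty}(w_{d_0})=c$. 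Combining, $m(w_{d_0})=f_{\bar\mu,k}(w_{d_0})=c$, that is $\dim\h^0(X,\Mb+d_0\Lb)^G=\dim\h^0(S,\Mb)^H$, as wanted. The hard point is precisely this bookkeeping of the finitely many walls on $(d_0,\infty)$, together with the fact --- supplied by the rational hypothesis --- that $w_{d_0}$ lies in the closure of the first such chamber.
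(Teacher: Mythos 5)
Your first two paragraphs are correct, and the key mechanism there is a genuine variant of the paper's argument: the paper never uses the equivariant triviality of $\Lb$ at this stage, but instead pins down the chamber's polynomial by noting that it takes the known value $\dim\h^0(S,\Mb)^H$ at infinitely many integer points of the ray and hence is constant. Your alternative --- $\sigma_0$ trivialises $\Lb$ equivariantly on $X^{ss}(\Lb)$ (by one-dimensionality of the spaces of invariants), hence $\pi_*\Lb_{w_d}\simeq\pi_*\Lb_{w_{d_0}}$ over $X^{ss}_{\mathrm{st}}(C_k)\sslash G$ and the Euler-characteristic polynomial is constant along the ray --- is sound and would serve the same purpose. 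The proof breaks in your third paragraph.

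The gluing step is a genuine gap. You assert that the polynomials of two chambers adjacent along a wall agree on that wall ``since both equal $m$ on its Zariski-dense lattice points in $\bar{\mu}$''. But $\bar{\mu}$ is a coset of a \emph{proper} finite-index sublattice $\Gamma$, and such a coset can miss a rational wall entirely, so there may be no such lattice points at all: for instance if $\Gamma=\left\lbrace a\Mb+b\Lb\ \text{ with }\ a+b\ \text{ even}\right\rbrace$, if $d_0$ is even (so that $\bar{\mu}$ is the odd-sum coset) and if the wall is spanned by $\Mb+\Lb$, then every lattice point of the wall has even coordinate sum and $\bar{\mu}\cap\text{wall}=\emptyset$. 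So the chain $f_{\bar{\mu},k}(w_{d_0})=\dots=f_{\bar{\mu},k_\infty}(w_{d_0})$ is not established, and the fix is not a refinement of the gluing but the observation that there are no walls to glue across. This is exactly the first step of the paper's proof, which your proposal is missing: by Ressayre's results on the GIT-fan (\cite{ressayre2}), the hypothesis that $X^{ss}(\Mb+d\Lb)\subset X^{ss}(\Lb)$ for \emph{all} rational $d>d_0$ forces all the $\Mb+d\Lb$, $d>d_0$, to lie in a single chamber $C_1$, with $\Lb$ and $\Mb+d_0\Lb$ both in $\overline{C_1}$. (Roughly speaking: a wall crossed at some rational $a>d_0$ is cut out by a pair $(Z,\tau)$, with $Z$ a component of the fixed locus of a one-parameter subgroup $\tau$ on which $\mu^{\Mb+a\Lb}(\cdot,\tau)=0$ while $\mu^{\Lb}(\cdot,\tau)\neq 0$; a point of $Z$ that is semistable for $\Mb+a\Lb$ is then destabilised for $\Lb$ by $\tau$ or $\tau^{-1}$, contradicting the hypothesis at $d=a$.) Once this is in place your wall bookkeeping is vacuous: the chamber of your second paragraph contains $w_d$ for every $d>d_0$, any sufficiently large integer of your arithmetic progression lies in it and in $\bar{\mu}$, and your constant polynomial evaluates there to $c=\dim\h^0(S,\Mb)^H$, giving $m(w_{d_0})=c$ --- at which point even your triviality argument can be replaced by the paper's simpler ``polynomial constant on an infinite set'' evaluation.
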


\begin{proof}
Let us write $\ell=\dim\left(\h^0(S,\Mb)^H\right)$, consider a $d_0\in\N$ as in the statement above, and denote by $C_1,\dots,C_N$ the chambers (i.e. GIT-classes of maximal dimension) in $\Q\Lb\oplus\Q\Mb$ for the action of $G$ on $X$. Since, for all $d>d_0$ ($d\in\Q$), $X^{ss}(\Mb+d\Lb)\subset X^{ss}(\Lb)$, and thanks to the results by N. Ressayre (cf. \cite{ressayre2}) concerning the GIT-fan, the situation is necessarily the following:
\begin{itemize}
\item the $\Mb+d\Lb$ for $d>d_0$ are in a chamber, say for instance $C_1$;
\item $\Lb$ belongs to $\overline{C_1}$, the closure of this chamber;
\item $\Mb+d_0\Lb$ belongs also to $\overline{C_1}$.
\end{itemize}
We can draw a picture of this situation: in $\Q\Lb\oplus\Q\Mb$, the set of semi-ample line bundles is a closed convex cone. As a consequence, up to multiplication by a positive rational number, this set can be represented by a line or a segment. The two cases can here be treated in the same way, so we assume for instance to be in the case of a line. Then the situation of the chambers is typically:
\begin{center}
\begin{tikzpicture}
\draw (0,0)--(10,0);
\draw (1,0.2)--(1,-0.2);
\draw (3.5,0.2)--(3.5,-0.2);
\draw (5,0.2)--(5,-0.2);
\draw (9,0.2)--(9,-0.2);
\node at (0.5,0.5) {$C_{i_1}$};
\node at (2.25,0.5) {$C_{i_2}$};
\node at (4.25,0.5) {$C_{i_3}$};
\node at (7,0.5) {$C_{i_4}$};
\node at (9.5,0.5) {$C_{i_5}$};
\end{tikzpicture}
\end{center}
If $\Mb+d_0\Lb\in C_1$, then $X^{ss}(\Mb+d_0\Lb)\subset X^{ss}(\Lb)$ and Proposition \ref{inclusion_of_ss_points_suffices} gives immediately that $\dim\h^0(X,\Mb+d_0\Lb)^G=\ell$. So we assume from now on that $\Mb+d_0\Lb\in\overline{C_1}\setminus C_1$:
\begin{center}
\begin{tikzpicture}
\draw (0,0)--(10,0);
\draw (3,0.2)--(3,-0.2);
\draw (9,0.2)--(9,-0.2);
\node at (6,0.5) {$C_1$};
\draw (7.4,0.1)--(7.6,-0.1);
\draw (7.4,-0.1)--(7.6,0.1);
\node at (7.5,-0.3) {$\Lb$};
\draw (2.9,0.1)--(3.1,-0.1);
\draw (2.9,-0.1)--(3.1,0.1);
\node at (3,-0.3) {$\Mb+d_0\Lb$};
\end{tikzpicture}
\end{center}
(if $\Lb$ belongs to the boundary of $C_1$, this does not change what follows). 

\vspace{5mm}

Applying Lemma \ref{lemma_lattice} and Theorem \ref{theo_quasipolynomial}, we get that there exists a sublattice $\Gamma$ of $\Z\Lb\oplus\Z\Mb$ of finite index such that, for all $\bar{\gamma}=\gamma+\Gamma\in(\Z\Lb\oplus\Z\Mb)/\Gamma$, there is a polynomial $P_{\bar{\gamma}}$ with rational coefficients such that
\[ \forall\Nb\in\overline{C_1}\cap(\gamma+\Gamma), \: \dim\left(\h^0(X,\Nb)^G\right)=P_{\bar{\gamma}}(\Nb). \]
In particular, if we denote $\bar{\gamma}_0=(\Mb+d_0\Lb)+\Gamma$,
\[ \dim\left(\h^0(X,\Mb+d_0\Lb)^G\right)=P_{\bar{\gamma}_0}(\Mb+d_0\Lb). \]
We then consider the polynomial function in one variable
\[ \tilde{P}:d\longmapsto P_{\bar{\gamma}_0}(\Mb+d\Lb). \]
We want to prove that $\tilde{P}$ is constant and we know that, for all integers $d>d_0$ such that $(\Mb+d\Lb)+\Gamma=\bar{\gamma}_0$, $\tilde{P}(d)=\dim\left(\h^0(X,\Mb+d\Lb)^G\right)=\ell$. It is consequently sufficient to notice that there exist infinitely many such $d$'s: if we denote by $\Nb_1=a_1\Mb+b_1\Lb$ and $\Nb_2=a_2\Mb+b_2\Lb$ the elements of a $\Z$-basis of $\Gamma$, each $d\in\N(b_1a_2-a_1b_2)+d_0$ does the trick. Finally, $\tilde{P}$ is constant and $\dim\left(\h^0(X,\Mb+d_0\Lb)^G\right)=\tilde{P}(d_0)=\ell$.
\end{proof}

Thanks to this result we can improve slightly Propositions \ref{strict_bound_murnaghan} and \ref{strict_bounds_other_case}, and get Theorems \ref{thm_bound1} and \ref{thm_bound2}.

\begin{rmk}\label{rmk_on_bound2}
We had previously noticed that, in the case of triple $\big((1,1),(1,1),(2)\big)$, if $n_1=2$ (or $n_2=2$), there was two ways to compute a bound:
\begin{itemize}
\item by using the formula in the previous theorem which is special to this case,
\item by using the formula valid for $n_1,n_2\geq 3$, setting $\lambda_3=0$ (or $\mu_3=0$) and considering $\lambda$ (or $\mu$) as a partition of length 3.
\end{itemize}
Let us compare the two bounds we can obtain. For instance for three partitions of the form $(\lambda_1,\dots,\lambda_{n_1})$, $(\mu_1,\mu_2)$, and $(\nu_1,\dots,\nu_{2n_1})$ (with $n_1\geq 3$), we obtain by the first method:
\[ D_2=\left\lceil\frac{1}{2}(m+\lambda_3+2\nu_2-\nu_{2n_1}+\nu_3+\nu_4+\dots+\nu_{n_1+1})\right\rceil. \]
And by the second method we get:
\[ D'_2=\left\lceil\frac{1}{2}(m+\lambda_3+2\nu_2+\nu_3+\nu_4+\dots+\nu_{n_1+1})\right\rceil. \]
So we have $D_2\leq D'_2$ and $D'_2-D_2=\lfloor\frac{\nu_{2n_1}}{2}\rfloor$. Similarly, for $(\lambda_1,\lambda_2)$, $(\mu_1,\mu_2)$, and $(\nu_1,\dots,\nu_4)$,
\[ D_2=\left\lceil\frac{1}{2}(m+2\nu_2-\nu_4+\nu_3)\right\rceil,\text{ whereas }D'_2=\left\lceil\frac{1}{2}(m+2\nu_2+\nu_3+\nu_4)\right\rceil. \]
Once again, $D_2\leq D'_2$. And, this time, $D'_2-D_2=\nu_4$.\\
As a conclusion, it is better to use the first way of computing the bound, and that is what we do later on the examples.
\end{rmk}

\subsection{Possibility of recovering already existing bounds by our method}

In the case of Murnaghan's stability, there are some already existing bounds for the stabilisation of the sequence (see the Introduction). An interesting fact is that we can recover (and sometimes improve) some of them by our method, if we choose one-parameter subgroups different from the one that we had chosen. We focus only on two of the four bounds we cited: Brion's one (denoted by $D_B$), and the second one from Briand, Orellana, and Rosas, which we denote by $D_{BOR2}$. They are the ones who have a form similar to our bound; the two other ones seem far too different to be obtained this way.

\subsubsection{Conversion to our settings}

In the article \cite{briand-orellana-rosas}, the settings are different from ours. So, if we want to recover the bounds given here, the first thing is to convert them into our settings. For the authors, the bound given (for a triple of partitions $(\alpha,\beta,\gamma)$) is the first integer $n$ for which $\overline{\alpha}[n]=(n-|\alpha|,\alpha_1,\dots,\alpha_{\ell(\alpha)})$, $\overline{\beta}[n]$, $\overline{\gamma}[n]$ are partitions and the sequence
\[ (g_{\overline{\alpha}[n],\overline{\beta}[n],\overline{\gamma}[n]})_n \]
reaches its limit value (we know that it is a stationary sequence). Whereas for us, our bound for a triple $(\lambda,\mu,\nu)$ of partitions (such that $|\lambda|=|\mu|=|\nu|$) is the first integer $d$ such that the sequence
\[ (g_{\lambda+(d),\mu+(d),\nu+(d)})_d \]
reaches its limit value.\\
The correspondence between the two points of view is then (we adopt the following useful notation: for a partition $\delta$, $\delta_{\geq 2}$ denotes the partition obtained by removing the first -i.e. biggest- part of $\delta$):
\[ \left\lbrace\begin{array}{l}
\alpha=\lambda_{\geq 2}\\
\beta=\mu_{\geq 2}\\
\gamma=\nu_{\geq 2}\\
n=d+|\lambda|=d+|\mu|=d+|\nu|
\end{array}\right.. \]
M. Brion's bound, which in \cite{briand-orellana-rosas} notations is $M_B(\alpha,\beta;\gamma)=|\alpha|+|\beta|+\gamma_1$, then becomes 
\[ D_B(\lambda,\mu,\nu)=|\mu|-\lambda_1-\mu_1+\nu_2. \]
Similarly the bound $D_{BOR2}$, which in their notations is
\[ N_2(\alpha,\beta,\gamma)=\left\lfloor\dps\frac{|\alpha|+|\beta|+|\gamma|+\alpha_1+\beta_1+\gamma_1}{2}\right\rfloor, \]
becomes
\[ D_{BOR2}(\lambda,\mu,\nu)=\left\lfloor\frac{-\lambda_1+|\mu_{\geq 2}|-\nu_1+\lambda_2+\mu_2+\nu_2}{2}\right\rfloor. \]

\subsubsection{One parameter subgroups corresponding to $D_B$ and $D_{BOR2}$}

\underline{Case of $D_B$:} We define the following one parameter subgroup of $G$:
\[ \tau_B=\left.\big(1,0,\dots,0\text{ }\right|-1,0,-1,\dots,-1\big). \]
Thus $\tau_B$ satisfies $\mu^{\overline{\Lb}}(\overline{x},\tau_B)=1$ and, for all $x\in\pi^{-1}(\overline{x})$,
\[ \begin{array}{rcl}
\mu^{\Mb+d\Lb}(x,\tau_B)>0\Longleftrightarrow d & > & \max_{x\in\pi^{-1}(\overline{x})}(-\mu^{\Mb}(x,\tau_B))\\
 & = & -\lambda_1+\mu_2+\mu_3+\dots+\mu_{n_2}+\nu_2-\nu_{n_1+n_2}-\dots-\nu_{n_1n_2}.
\end{array} \]
Until now, we did not make any particular assumption on the flag varieties we considered. We had always taken complete ones, but we could also consider partial ones. Here, let us consider the partial flag variety $\fl(V_1\otimes V_2;1,2,\dots,n_1+n_2-1)$ for the third factor of $X$. This corresponds to forgetting the terms $-\nu_{n_1+n_2}\dots-\nu_{n_1n_2}$ in the right-hand side of the inequality above. This way, this right-hand side is just $D_B(\lambda,\mu,\nu)$. Hence the bound $D_B$ can be recovered by our method, with the one-parameter subgroup $\tau_B$.

\begin{rmk}\label{rmk_improvement_D_B}
We can thus have an improvement of $D_B$ in the case of a ``long'' partition $\nu$: if we keep on with complete flag varieties, we keep the terms $-\nu_{n_1+n_2}\dots-\nu_{n_1n_2}$ at the end of the bound, and so it gives a lower value (and then better one) for partitions $\nu$ of length at least $n_1+n_2$.
\end{rmk}

\vspace{5mm}

\underline{Case of B-O-R 2:} We define the following one parameter subgroup of $G$:
\[ \tau_{BOR2}=\left.\big(1,-1,0,\dots,0\text{ }\right|-2,0,-1,\dots,-1\big). \]
This $\tau_{BOR2}$ satisfies $\mu^{\overline{\Lb}}(\overline{x},\tau_{BOR2})=2$ and, for all $x\in\pi^{-1}(\overline{x})$,
\[ \begin{array}{rcl}
\mu^{\Mb+d\Lb}(x,\tau_{BOR2})>0\Longleftrightarrow d & > & \dps\frac{1}{2}\max_{x\in\pi^{-1}(\overline{x})}(-\mu^{\Mb}(x,\tau_{BOR2}))\\
 & = & \dps\frac{1}{2}(-\lambda_1+\lambda_2+2\mu_2+|\mu_{\geq 3}|-\nu_1+\nu_2-\nu_{n_1+n_2-1}\\
 & & -\dots-\nu_{n_1n_2-n_1-n_2+2}-2(\nu_{n_1n_2-n_1-n_2+3}+\dots\\
 & & +\nu_{n_1n_2-1})-3\nu_{n_1n_2}).
\end{array} \]
Once again, considering the partial flag variety $\fl(V_1\otimes V_2;1,2,\dots,n_1+n_2-2)$ (slightly different from the previous case), we can ``forget'' the terms concerning the last parts of partition $\nu$ (i.e. $-\nu_{n_1+n_2-1}-\dots-\nu_{n_1n_2-n_1-n_2+2}-2(\nu_{n_1n_2-n_1-n_2+3}+\dots+\nu_{n_1n_2-1})-3\nu_{n_1n_2}$) and thus recognise $D_{BOR2}(\lambda,\mu,\nu)$ in the right-hand side of the previous inequality. Hence the bound $D_{BOR2}$ can be recovered by our method, with the one-parameter subgroup $\tau_{BOR2}$.

\begin{rmk}\label{rmk_improvement_D_BOR2}
As for $D_B$, we can also have an improvement of $D_{BOR2}$ by keeping the complete flag variety $\fl(V_1\otimes V_2)$: if $\ell(\nu)\geq n_1+n_2-1$, our method gives a lower bound.
\end{rmk}

\subsection{Tests of our bounds and comparison with existing results}\label{tests}

\subsubsection[Tests and comparison for ((1),(1),(1))]{Tests and comparison for $\big((1),(1),(1)\big)$}\label{test1}

We are now going to test the bound $D_1$ from Theorem \ref{thm_bound1} on a dozen examples. We also compare it to the four other bounds exposed in \cite{briand-orellana-rosas} (Vallejo's bound is denoted by $D_V$, and the first one from Briand, Orellana, and Rosas by $D_{BOR1}$).

The following array presents the results of these bounds on chosen examples. We also added a column giving the minimal integer coming from all the bounds obtainable by our method: ours, $D_B$ (a little improved, by Remark \ref{rmk_improvement_D_B}), and $D_{BOR2}$ (likewise, cf. Remark \ref{rmk_improvement_D_BOR2}). We denote this by $D_m$. Finally, we calculated with Sage\footnote{http://www.sagemath.org/} the first integer -denoted $D_{\text{real}}$- from which the sequence $(g_{\lambda+d(1),\mu+d(1),\nu+d(1)})_{d\in\N}$ actually stabilises.
\[ \begin{array}{c|c|c|c|c|c|c|c}
\text{triple }\lambda,\mu,\nu & D_1 & D_m & D_{\text{real}} & D_B & D_V & D_{BOR1} & D_{BOR2}\\
\hline
(8,5,2),(6,5,2,2),(4,4,3,3,1) & 6 & 5 & 5 & 5 & 5 & 5 & 6\\
(4,3,3),(3,2^3,1),(2^3,1^4) & 4 & 4 & 3 & 5 & 5 & 4 & 4\\
(5,5,4,4),(6^3),(3,3,2^4,1^4) & 5 & 5 & 5 & 10 & 11 & 6 & 9\\
(6,5,5),(8,8),(4,4,3,3,2) & 4 & 4 & 4 & 6 & 7 & 4 & 7\\
(5^4),(4^5),(2^4,1^{12}) & 5 & 4 & 4 & 13 & 14 & 6 & 10\\
(6^3),(3^6),(2^6,1^6) & 7 & 6 & 6 & 11 & 11 & 7 & 9\\
(5,5,4,4),(6^3),(3,2^6,1^3) & 4 & 4 & 4 & 9 & 11 & 5 & 8\\
(7,6),(6,5,2),(7,3,2,1) & 3 & 3 & 3 & 3 & 4 & 3 & 3\\
(8,4,3,3,1),(7,3^4),(14,3,2) & 0 & 0 & 0 & 0 & 0 & 0 & 0\\
(8,5,3,1),(2,1^{15}),(4,3,3,2,2,1^3) & 3 & 1 & 1 & 6 & 7 & 2 & 6\\
(6,6,4),(8,8),(5,5,4,1,1) & 7 & 6 & 6 & 7 & 7 & 7 & 8\\
(8,6,6,2,1),(14,5,4),(5^4,3) & 6 & 6 & 5 & 6 & 8 & 5 & 6
\end{array} \]

We can notice (see e.g. the third row in the array) that there exist cases in which our bound is optimal whereas the other known bounds compared here are not. Ours is of course not always better: see e.g. the last row.

\subsubsection[Tests of the bound for ((1,1),(1,1),(2))]{Tests of the bound for $\big((1,1),(1,1),(2)\big)$}

Here we compute the bound $D_2$ from Theorem \ref{thm_bound2} for a dozen examples and compare it, in the following array, to the first integer $D_{\text{real}}$ from which the sequence actually stabilises. This last integer was once again computed with Sage.
\[ \begin{array}{c|c|c|c|c}
\lambda & \mu & \nu & D_2 & D_{\text{real}}\\
\hline
(5,5,4,4) & (6^3) & (3,3,2^4,1^4) & 5 & 4\\
(5^4) & (4^5) & (2^4,1^{12}) & 5 & 4\\
(6,5,5) & (6,5,5) & (3,3,2^4,1,1) & 4 & 4\\
(8,5,2) & (6,5,2,2) & (4,4,3,2,2) & 4 & 4\\
(4,3,3) & (4,3,3) & (2^3,1^4) & 3 & 3\\
(5,4,4) & (5,4,4) & (3,2^3,1^4) & 3 & 3\\
(6,5,5) & (8,8) & (4,4,3,3,2) & 3 & 2\\
(6,6,6) & (9,9) & (6,4,3,3,2) & 3 & 1\\
(10,8,6) & (12,12) & (6,5,4,4,3,2) & 1 & 1\\
(8,2) & (6,4) & (5,4,1) & 1 & 1\\
(6,6) & (8,4) & (6,4,2) & 0 & 0\\
(20,5) & (13,12) & (11,10,3,1) & 2 & 1
\end{array} \]

\section{Application to plethysm coefficients}\label{plethysm}

The aim of this section is to adapt the techniques we used on Kronecker coefficients to the plethysm coefficients, introduced by J. Littlewood in 1950.

\subsection{Definition and some known stability properties}

We still denote by $\Sc$ the Schur functor. For any partition $\lambda$, we also denote by $n_\lambda$ the dimension of the representation $\Sc^\lambda(V)$. By Weyl Dimension Formula, if $\ell(\lambda)\leq\dim(V)$,
\[ n_\lambda=\prod_{1\leq i<j\leq\ell(\lambda)}\frac{\lambda_i-\lambda_j+j-i}{j-i} \]
(see e.g. \cite{goodman-wallach}). The difficult problem of the composition of Schur functors gives rise to the plethysm coefficients.

\begin{de}\label{def_plethysm}
Let $\lambda$ and $\mu$ be partitions such that $\ell(\lambda)\leq n_\mu$ and $V$ a complex vector space such that $n=\dim V\geq\ell(\mu)$. Then $\Sc^\lambda(\Sc^\mu(V))$ is a representation of $\gl(V)$ and thus splits as a direct sum of irreducible ones:
\[ \Sc^\lambda(\Sc^\mu(V))=\bigoplus_{\nu\text{ s.t. }\ell(\nu)\leq n}a_{\lambda,\mu}^\nu\Sc^\nu(V). \]
The coefficients $a_{\lambda,\mu}^\nu$ are called the plethysm coefficients.
\end{de}

\begin{rmk}
There is a necessary condition (known since the work of Littlewood) on the sizes of the partitions for these coefficients to be non zero: if $|\lambda|.|\mu|\neq|\nu|$, then $a_{\lambda,\mu}^\nu=0$.
\end{rmk}

\vspace{5mm}

There exist for those coefficients some stability properties similar to the ones we studied concerning Kronecker coefficients. The following four are for example given in \cite{colmenarejo}:

\begin{prop}\label{prop_colmenarejo}
For any partitions $\lambda$, $\mu$, and $\nu$, such that $|\lambda|.|\mu|=|\nu|$, the following four sequences of plethysm coefficients are constant for $n$ sufficiently large:
\begin{enumerate}
\item $(a_{\lambda+(n),\mu}^{\nu+(|\mu|n)})_n$,
\item $(a_{\lambda+(n),\mu}^{\nu+n\mu})_n$,
\item $(a_{\lambda,\mu+(n)}^{\nu+(|\lambda|n)})_n$,
\item $(a_{\lambda,\mu+n\pi}^{\nu+n|\lambda|\pi)})_n$ for any partition $\pi$.
\end{enumerate}
Furthermore, the first one has limit zero when $\ell(\mu)>1$, and the second and fourth are non-decreasing.
\end{prop}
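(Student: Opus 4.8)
The plan is to transplant the geometric machinery of Section \ref{general_section} from Kronecker to plethysm coefficients, so that each of the four sequences becomes a sequence $\dim\h^0(X,\Mb+n\Lb)^G$ for a fixed reductive $G$, a projective variety $X$, and line bundles $\Lb,\Mb$, after which Proposition \ref{inclusion_of_ss_points_suffices} (equivalently Proposition \ref{prop_intro}) does the work. The first step is the dictionary, exactly parallel to Section \ref{link_with_line_bundles}. For fixed $\mu$, write $W=\Sc^\mu(V)$ with $\dim V$ large; Schur's lemma and Definition \ref{def_plethysm} give $a_{\lambda,\mu}^\nu=\dim\big((\Sc^\nu V)^*\otimes\Sc^\lambda(W)\big)^{\gl(V)}$, and Borel--Weil turns this into $\dim\h^0\big(\fl(V)\times\fl(W),\ \Lb_\nu\boxtimes\Lb_\lambda^*\big)^{\gl(V)}$, with $G=\gl(V)$ acting diagonally through its action on $W$.

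First I would treat the two sequences in which $\mu$ is fixed, where incrementing $n$ just tensors the fixed bundle $\Mb$ (carrying the base $(\lambda,\mu,\nu)$) by a fixed direction bundle $\Lb$. For sequence (2) one takes $\Lb=\Lb_\mu\boxtimes\Lb_{(1)}^*$, so that $\dim\h^0(X,\Lb^{\otimes d})^G=a^{d\mu}_{(d),\mu}$; since $\sym^d(W)$ has highest weight $d\mu$ with multiplicity one, this equals $1$ for every $d$, the direction is weakly stable, and Proposition \ref{prop_intro} yields eventual constancy together with the geometric bound $X^{ss}(\Mb+n\Lb)\subset X^{ss}(\Lb)$. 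The same argument covers sequence (1) when $\ell(\mu)=1$, where $W=\sym^{\mu_1}V$ and the relevant direction coefficient is again $1$. When $\ell(\mu)>1$ the direction coefficient drops to $0$ and weak stability fails; but here a highest-weight estimate finishes the job directly: the first part $\nu_1+|\mu|n$ of the target exceeds the largest first part occurring in $\Sc^{\lambda+(n)}(W)$, which grows only like $\mu_1 n$ with $\mu_1<|\mu|$, so the coefficient vanishes for $n$ large. This simultaneously gives stabilisation and the stated limit $0$.

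The genuine difficulty is the two sequences in which the inner partition $\mu$ grows, since then $W=\Sc^{\mu+n\pi}(V)$ --- and hence the ambient variety $\fl(W)$ --- changes with $n$, so the fixed-bundle picture is lost. The hard part will be to find a model in which inner growth again becomes a single fixed tensoring direction. The route I would pursue is a Schur--Weyl realisation of the outer functor: writing $\Sc^\lambda(W)=\Hom_{\gs_{|\lambda|}}(M_\lambda,W^{\otimes|\lambda|})$ and using a second space $U$ with $\dim U\geq\ell(\lambda)$ to encode $M_\lambda$, one replaces $\fl(W)$ by $|\lambda|$ copies of $\fl(V)$ (one per tensor slot) carrying the bundle $\Lb_\mu^*$. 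Raising $\mu$ to $\mu+n\pi$ then multiplies each of these $|\lambda|$ fixed factors by $\Lb_\pi^*$, and raising $\nu$ to $\nu+n|\lambda|\pi$ multiplies the $\nu$-factor by $\Lb_{|\lambda|\pi}$, so on a fixed variety $X$ one recovers a single fixed direction $\Lb$, with $G=\gl(V)$ and a finite symmetry acting on the slots. I expect the crux to be verifying weak stability of this $\Lb$ (that one explicit multiplicity equals $1$ for every $d$) and checking that the reductions of Section \ref{general_section} survive the extra finite-group symmetry.

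Finally the monotonicity assertions for sequences (2) and (4) are formal once a model is in place: choosing a nonzero invariant section $\sigma_0\in\h^0(X,\Lb)^G$, which exists precisely because the direction coefficient is $1$, multiplication by $\sigma_0$ is a $G$-equivariant map $\h^0(X,\Mb+n\Lb)\to\h^0(X,\Mb+(n+1)\Lb)$ that is injective on the integral variety $X$, hence injective on invariants, giving $a_n\leq a_{n+1}$. The limit-$0$ claim for sequence (1) with $\ell(\mu)>1$ is the highest-weight estimate above. Thus, modulo the construction of the inner-growth model, all four stabilisations and the two supplementary claims reduce to the tools of Section \ref{general_section}, and I would expect the inner model together with its weak-stability verification to be the only non-routine ingredient.
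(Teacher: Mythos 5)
Your treatment of sequences (1) and (2) is correct, and it is essentially what the paper itself does: the same dictionary (the paper's $X_\mu=\fl(V)\times\fl(\C^{n_\mu})$ with $\Lb_{\lambda,\nu}=\Lb_\nu\otimes\Lb_\lambda^*$), the same weak-stability check $a_{(d),\mu}^{d\mu}=1$ by the multiplicity-one weight argument, and the same injection $\sigma\mapsto\sigma\otimes\sigma_0$ for monotonicity; your direct first-part estimate for sequence (1) when $\ell(\mu)>1$ (first parts of weights of $\Sc^{\lambda+(n)}(\Sc^\mu V)$ grow like $\mu_1 n<|\mu|n$) is a sound, slightly more elementary substitute for the paper's argument that $X_\mu^{ss}(\Lb_{(1),(|\mu|)})=\emptyset$ forces eventual vanishing. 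But you should be aware that the paper never proves Proposition \ref{prop_colmenarejo} at all: it quotes it from \cite{colmenarejo}, and its own machinery (Theorem \ref{main_thm_plethysm} and the two subsections around it) recovers only sequences (1) and (2), explicitly declining to treat (3) and (4) on the grounds that there "it would give a variety depending on $d$ and so it would be a lot different".

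That is exactly where the genuine gap in your proposal lies: for sequences (3) and (4) you offer a plan, not a proof, and the steps you defer are the substantive ones. Concretely: (i) in your Schur--Weyl model the coefficient $a_{\lambda,\mu+n\pi}^{\nu+n|\lambda|\pi}$ becomes the multiplicity of $M_\lambda$ in the $\gs_{|\lambda|}$-module $\h^0\bigl(\fl(V)\times\fl(V)^{|\lambda|},\,\Lb_{\nu+n|\lambda|\pi}\boxtimes(\Lb^*_{\mu+n\pi})^{\boxtimes|\lambda|}\bigr)^{\gl(V)}$; since $\dim M_\lambda>1$ in general, this is \emph{not} of the form $\dim\h^0(X,\Nb)^G$ for a line bundle, so Section \ref{general_section} does not apply as stated. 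You must either rerun all of Section \ref{general_section} equivariantly for the disconnected group $\gl(V)\times\gs_{|\lambda|}$ --- Hilbert--Mumford, the Teleman restriction and the Luna slice all need re-examination for a group with components, e.g.\ via $X^{ss}(G)=X^{ss}(G^\circ)$, while tracking the $\gs_{|\lambda|}$-module structure of $\h^0(S,\Mb)^H$ --- or genuinely carry out your $U$-space idea, realising $M_\lambda^*$ as $\bigl((U^*)^{\otimes|\lambda|}\otimes\Sc^\lambda(U)\bigr)^{\gl(U)}$ so that the coefficient becomes an honest invariant dimension on a larger fixed variety (still for a disconnected group). (ii) The weak-stability input for the new direction --- that for every $d\geq 1$ the representation $\Sc^{d|\lambda|\pi}(V)$ occurs exactly once in $(\Sc^{d\pi}V)^{\otimes|\lambda|}$ and that $\gs_{|\lambda|}$ acts trivially on that multiplicity line --- is true (every weight of $(\Sc^{d\pi}V)^{\otimes|\lambda|}$ is dominated by $d|\lambda|\pi$, with the top weight space spanned by the symmetric tensor $v_{d\pi}^{\otimes|\lambda|}$), but you neither state nor verify it, and it is precisely the analogue of $a_{(d),\mu}^{d\mu}=1$ on which everything rests. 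Until (i) and (ii) are done, your proposal proves only the half of the proposition that the paper proves, and leaves open the half that the paper also leaves to \cite{colmenarejo}.
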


\subsection{Link with invariant sections of line bundles}

Starting from Definition \ref{def_plethysm} we get, thanks to Schur's Lemma:
\[ a_{\lambda,\mu}^\nu=\dim(\Sc^\lambda(\Sc^\mu(V))\otimes(\Sc^\nu V)^*)^G \]
(denoting $\gl(V)$ by $G$). Then, Borel-Weil's Theorem gives
\[ (\Sc^\nu V)^*\simeq\h^0(\fl(V),\Lb_\nu) \]
and
\[ \Sc^\lambda(\Sc^\mu(V))\simeq\h^0(\fl(\Sc_\mu(V)),\Lb_\lambda^*). \]
Let us keep in mind that, as a vector space, $\Sc^\mu(V)$ is simply $\C^{n_\mu}$. So we obtain the following proposition:

\begin{prop}
If $V$ is a complex vector space of dimension $n$ and $\lambda$, $\mu$, $\nu$ are three partitions such that $\ell(\lambda)\leq n_\mu$, $\ell(\mu)\leq n$, and $\ell(\nu)\leq n$, then
\[ a_{\lambda,\mu}^\nu=\dim\left(\h^0(X_\mu,\Lb_{\lambda,\nu})^G\right), \]
where $G=\gl(V)$, $X_\mu=\fl(V)\times\fl(\C^{n_\mu})$, and $\Lb_{\lambda,\nu}=\Lb_\nu\otimes\Lb_\lambda^*$.
\end{prop}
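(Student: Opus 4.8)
The plan is to assemble the three identities already displayed just above the statement, exactly as was done for the Kronecker proposition in Section~\ref{link_with_line_bundles}. First I would recall the consequence of Schur's Lemma,
\[ a_{\lambda,\mu}^\nu=\dim\left(\Sc^\lambda(\Sc^\mu(V))\otimes(\Sc^\nu V)^*\right)^G, \]
which holds under the stated length hypotheses $\ell(\lambda)\leq n_\mu$, $\ell(\mu)\leq n$, and $\ell(\nu)\leq n$ (these guarantee that the Schur functors involved are non-zero and that the decomposition of Definition~\ref{def_plethysm} is the full isotypic decomposition, so that Schur's Lemma indeed extracts the multiplicity).

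Second, I would feed in the two instances of Borel-Weil's Theorem. On the one hand $(\Sc^\nu V)^*\simeq\h^0(\fl(V),\Lb_\nu)$ as a $\gl(V)$-module; on the other hand $\Sc^\lambda(\Sc^\mu(V))\simeq\h^0(\fl(\Sc^\mu(V)),\Lb_\lambda^*)$. Since $\Sc^\mu(V)$ is, as a vector space, simply $\C^{n_\mu}$, the flag variety $\fl(\Sc^\mu(V))$ is identified with $\fl(\C^{n_\mu})$, so the second factor of $X_\mu$ is the correct one and $X_\mu=\fl(V)\times\fl(\C^{n_\mu})$ is projective with $\Lb_{\lambda,\nu}=\Lb_\nu\otimes\Lb_\lambda^*$ a line bundle on it.

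Third --- and this is the only point requiring care --- I would make the $G$-action explicit. The group $G=\gl(V)$ acts on the first factor naturally and on the second factor $\fl(\C^{n_\mu})$ through the representation $\Sc^\mu:\gl(V)\to\gl(\C^{n_\mu})$; it therefore acts diagonally on $X_\mu$, and $\Lb_{\lambda,\nu}$ is $G$-linearised for this action. The content to verify is that restricting the natural $\gl(\C^{n_\mu})$-module structure on $\h^0(\fl(\C^{n_\mu}),\Lb_\lambda^*)$ along $\Sc^\mu$ recovers exactly the $\gl(V)$-module $\Sc^\lambda(\Sc^\mu V)$; this is precisely the functoriality of the Schur functor $\Sc^\lambda$ applied to the $\gl(V)$-equivariant identification $\C^{n_\mu}\simeq\Sc^\mu(V)$.

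Finally, I would combine the K\"unneth formula for global sections of an external tensor product on a product of projective varieties,
\[ \h^0(X_\mu,\Lb_{\lambda,\nu})\simeq\h^0(\fl(V),\Lb_\nu)\otimes\h^0(\fl(\C^{n_\mu}),\Lb_\lambda^*)\simeq(\Sc^\nu V)^*\otimes\Sc^\lambda(\Sc^\mu V), \]
with the fact that this identification is $G$-equivariant, so that taking $G$-invariants and then dimensions returns $a_{\lambda,\mu}^\nu$. The main obstacle is entirely this bookkeeping of $G$-structures in the third step: confirming that the diagonal $\gl(V)$-action --- natural on the first factor and induced via $\Sc^\mu$ on the second --- is exactly the one under which $\Lb_{\lambda,\nu}$ is linearised and for which K\"unneth is equivariant. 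Everything else is a direct transcription of the Kronecker argument.
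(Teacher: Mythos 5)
Your proposal is correct and follows essentially the same route as the paper, which proves this proposition by exactly the chain you describe: Schur's Lemma for the invariant-dimension formula, Borel--Weil applied twice (with the identification $\Sc^\mu(V)\simeq\C^{n_\mu}$ as a vector space), and the section space of the external tensor product over the product of flag varieties. Your third step, spelling out that the $\gl(V)$-action on the second factor is induced through the representation $\Sc^\mu:\gl(V)\to\gl(\C^{n_\mu})$ and that the identifications are equivariant for this action, is left implicit in the paper, so making it explicit is a welcome (but not divergent) addition.
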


For instance, it gives interesting things for two of the sequences cited earlier:
\[ a_{\lambda+(d),\mu}^{\nu+d\mu}=\dim\left(\h^0(X_\mu,\Lb_{\lambda,\nu}+d\Lb_{(1),\mu})^G\right) \]
and
\[ a_{\lambda+(d),\mu}^{\nu+(d|\mu|)}=\dim\left(\h^0(X_\mu,\Lb_{\lambda,\nu}+d\Lb_{(1),(|\mu|)})^G\right). \]
As, in these cases, the projective variety $X_\mu$ does not depend on $d$, we can apply our techniques. For comparison, for the two other sequences cited, it would give a variety depending on $d$ and so it would be a lot different.

\vspace{5mm}

More generally, we are going to consider sequences of general term
\[ a_{\lambda+d\alpha,\mu}^{\nu+d\gamma}=\dim\left(\h^0(X_\mu,\Lb_{\lambda,\nu}+d\Lb_{\alpha,\gamma})^G\right), \]
where $\alpha$ and $\gamma$ are partitions such that $|\alpha|.|\mu|=|\gamma|$.

\subsection{Application of our previous techniques}

Using exactly the same method as for Kronecker coefficients, we get the following result (Sam and Snowden obtained the same in \cite{sam-snowden} by completely different methods, whereas Paradan reproved it in \cite{paradan}):

\begin{theo}\label{main_thm_plethysm}
Let $\lambda$, $\mu$, $\nu$ and $\alpha$, $\gamma$ be partitions such that $|\lambda|.|\mu|=|\nu|$ and, for all $d\in\N^*$, $a_{d\alpha,\mu}^{d\gamma}=1$. Then the sequence $\left(\dps a_{\lambda+d\alpha,\mu}^{\nu+d\gamma}\right)_{d\in\N}$ is non-decreasing and stabilises for $d$ large enough.
\end{theo}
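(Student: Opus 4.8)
The plan is to transport the argument of Section~\ref{general_section} verbatim, since the geometric data here is of exactly the same type. First I would set $X=X_\mu=\fl(V)\times\fl(\C^{n_\mu})$, $G=\gl(V)$, and introduce the two $G$-linearised line bundles $\Lb=\Lb_{\alpha,\gamma}$ and $\Mb=\Lb_{\lambda,\nu}$, so that
\[ a_{\lambda+d\alpha,\mu}^{\nu+d\gamma}=\dim\left(\h^0(X,\Mb+d\Lb)^G\right). \]
Both bundles are semi-ample (each is a tensor product of bundles $\Lb_\delta$ and their duals over a product of flag varieties), and since $\Lb_{d\alpha,d\gamma}=\Lb_\gamma^{\otimes d}\otimes(\Lb_\alpha^*)^{\otimes d}=\Lb^{\otimes d}$, the hypothesis $a_{d\alpha,\mu}^{d\gamma}=1$ for all $d\in\N^*$ says precisely that $\dim\h^0(X,\Lb^{\otimes d})^G=1$ for every $d$. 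This is exactly the weak-stability input of Section~\ref{general_section}, with $X_\mu$ a product of flag varieties just as before, so none of the geometric hypotheses change.

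The stabilisation is then a direct transcription. Since $X$ is a product of flag varieties and $\Lb,\Mb$ are semi-ample, Proposition~\ref{prop_semi-stable} applies and yields an integer $D$ with $X^{ss}(\Mb+d\Lb)\subset X^{ss}(\Lb)$ for all $d\geq D$. The condition $\dim\h^0(X,\Lb^{\otimes d})^G=1$ makes $X^{ss}(\Lb)\sslash G$ a single point, so Luna's Slice \'{E}tale Theorem provides a reductive subgroup $H\subset G$ and an affine $H$-variety $S$ (in fact a vector space) with $X^{ss}(\Lb)\simeq G\times_H S$ and $\Lb$ trivial over $S$. Proposition~\ref{inclusion_of_ss_points_suffices} then gives, for all $d\geq D$,
\[ \h^0(X,\Mb+d\Lb)^G\simeq\h^0(S,\Mb)^H, \]
whose dimension is independent of $d$; this is the asserted stabilisation.

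It remains to establish the monotonicity, which is the one genuinely new point compared with the Kronecker statement of Theorem~\ref{big_theo}. Taking $d=1$ in the hypothesis gives $a_{\alpha,\mu}^\gamma=1$, so I may fix a nonzero invariant section $\sigma_0\in\h^0(X,\Lb)^G$. Multiplication by $\sigma_0$ defines, for each $d$, a $G$-equivariant map
\[ \h^0(X,\Mb+d\Lb)\longrightarrow\h^0(X,\Mb+(d+1)\Lb),\qquad s\longmapsto s\otimes\sigma_0. \]
Because $X=\fl(V)\times\fl(\C^{n_\mu})$ is irreducible and $\sigma_0$ is not identically zero, this map is injective: a section killed by $\sigma_0$ vanishes on the dense open locus where $\sigma_0\neq 0$, hence everywhere. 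Restricting to $G$-invariants yields injections $\h^0(X,\Mb+d\Lb)^G\hookrightarrow\h^0(X,\Mb+(d+1)\Lb)^G$, so the sequence of dimensions is non-decreasing.

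I do not expect a serious obstacle: the work is bookkeeping rather than a new idea. One must simply verify that $X_\mu$ meets the hypotheses of Section~\ref{general_section}—which it does, being a product of flag varieties—and that weak stability for plethysm correctly translates into the one-dimensionality of $\h^0(X,\Lb^{\otimes d})^G$. The only substantively new ingredient, the monotonicity, follows cleanly from injectivity of multiplication by the (essentially unique) invariant section of $\Lb$ on the irreducible variety $X$; the remaining care is just to confirm that the Luna slice and the semistability-inclusion arguments carry over word for word with $\Lb$ and $\Mb$ in their new roles.
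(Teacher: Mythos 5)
Your proposal is correct and follows essentially the same route as the paper: the paper likewise reduces to $\dim\h^0(X_\mu,\Lb_{\alpha,\gamma}^{\otimes d})^G=1$, invokes the semi-stability inclusion $X_\mu^{ss}(\Mb+d\Lb)\subset X_\mu^{ss}(\Lb)$, passes through Luna's Slice \'Etale Theorem and the triviality of $\Lb$ on $S$ to get $\h^0(S,\Lb_{\lambda,\nu})^H$, and proves monotonicity by tensoring with the invariant section $\sigma_0$. Your only addition is spelling out why multiplication by $\sigma_0$ is injective (irreducibility of $X_\mu$), a detail the paper leaves implicit.
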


\begin{proof}
The fact that this sequence is non-decreasing is, as in the case of Kronecker coefficients, quite easy: let $\sigma_0\in\h^0(X_\mu,\Lb_{\alpha,\gamma})^G\setminus\{0\}$ (such a section exists because $a_{\alpha,\mu}^\gamma=1$). Then, for all $d\in\N$, we have the following injection:
\[ \begin{array}{rccl}
\iota_d: & \h^0(X_\mu,\Lb_{\lambda,\nu}+d\Lb_{\alpha,\gamma})^G & \longrightarrow & \h^0(X_\mu,\Lb_{\lambda,\nu}+(d+1)\Lb_{\alpha,\gamma})^G\\
 & \sigma & \longmapsto & \sigma\otimes\sigma_0
\end{array}, \]
and thus $a_{\lambda+d\alpha,\mu}^{\nu+d\gamma}\leq a_{\lambda+(d+1)\alpha,\mu}^{\nu+(d+1)\gamma}$.

\vspace{5mm}

For the fact that it stabilises, since it is exactly the same method as for Kronecker coefficients, we are not going to write every details. But here are the principal steps of the proof. First of all,
\[ \begin{array}{rcl}
\h^0(X_\mu,\Lb_{\lambda,\nu}+d\Lb_{\alpha,\gamma})^G & \simeq & \h^0(X_\mu^{ss}(\Lb_{\lambda,\nu}+d\Lb_{\alpha,\gamma}),\Lb_{\lambda,\nu}+d\Lb_{\alpha,\gamma})^G\\
 & \simeq & \h^0(X_\mu^{ss}(\Lb_{\alpha,\gamma}),\Lb_{\lambda,\nu}+d\Lb_{\alpha,\gamma})^G\qquad\text{for }d\gg 0
\end{array} \]
(because $X_\mu^{ss}(\Lb_{\lambda,\nu}+d\Lb_{\alpha,\gamma})\subset X_\mu^{ss}(\Lb_{\alpha,\gamma})$ for $d\gg 0$). Then, since $\h^0(X_\mu,\Lb_{\alpha,\gamma}^{\otimes d})^G\simeq\C$ for all $d\in\N^*$ and using Luna's Slice \'{E}tale Theorem,
\[ \begin{array}{rcl}
\h^0(X_\mu,\Lb_{\lambda,\nu}+d\Lb_{\alpha,\gamma})^G & \simeq & \h^0(G\times_H S,\Lb_{\lambda,\nu}+d\Lb_{\alpha,\gamma})^G\\
 & \simeq & \h^0(S,\Lb_{\lambda,\nu}+d\Lb_{\alpha,\gamma})^H
\end{array} \]
(notations are the same as in the Kronecker coefficients' case). Finally, we have also here that the line bundle $\Lb_{\alpha,\gamma}$ is trivial on $S$. Thus
\[ \h^0(X_\mu,\Lb_{\lambda,\nu}+d\Lb_{\alpha,\gamma})^G\simeq\h^0(S,\Lb_{\lambda,\nu})^H\qquad\text{for }d\gg 0. \]
\end{proof}

\vspace{5mm}

This theorem applies to one of the examples given above: the sequence $(a_{\lambda+(d),\mu}^{\nu+d\mu})_{d\in\N}$. To see that, one just has to check that, for all $d\in\N^*$, $a_{(d),\mu}^{d\mu}=1$. Let us set $d\in\N^*$. The coefficient $a_{(d),\mu}^{d\mu}$ is by definition the multiplicity of the irreducible representation $\Sc^{d\mu}(V)$ in the decomposition of $\sym^d(\Sc^\mu(V))$ ($\sym$ denotes the symmetric power).
\begin{itemize}
\item First, if $v\in\Sc^\mu(V)$ is of weight $\mu$ (denoted $v\in\Sc^\mu(V)_\mu$), then $v^d\in\sym^d(\Sc^\mu(V))$ is of weight $d\mu$. So $\dim\left(\sym^d(\Sc^\mu(V))_{d\mu}\right)\geq 1$.
\item Moreover, $\dim\Sc^\mu(V)_\mu=1$ and the set of weights in $\Sc^\mu(V)$ is $\wt(\Sc^\mu(V))=\{\mu\}\sqcup\{\text{weights}<\mu\}$. So, since a well-known (and easy to understand) fact is that the weights of $\sym^d(\Sc^\mu(V))$ are among $\{\chi_1+\dots+\chi_d\;;\;\chi_1,\dots,\chi_d\in\wt(\Sc^\mu(V))\}$, $\dim\left(\sym^d(\Sc^\mu(V))_{d\mu}\right)=1$.
\item Finally, $\wt(\sym^d(\Sc^\mu(V)))\subset\{\chi_1+\dots+\chi_d\;;\;\chi_1,\dots,\chi_d\in\wt(\Sc^\mu(V))\}$ also gives us that $\wt(\sym^d(\Sc^\mu(V)))=\{d\mu\}\sqcup\{\text{weigths}<d\mu\}$.
\end{itemize}
Thus we have $a_{(d),\mu}^{d\mu}=1$.

\subsection[Other example, where Theorem 4.5 does not apply]{Other example, where Theorem \ref{main_thm_plethysm} does not apply}

Now what about the other sequence cited as example: $(a_{\lambda+(d),\mu}^{\nu+(d|\mu|)})_{d\in\N}$? When $\ell(\mu)=1$, it is the same as before. So assume $\ell(\mu)>1$.

\vspace{5mm}

Let us set $d\in\N^*$ and compute $a_{(d),\mu}^{(d|\mu|)}$. This coefficient is the multiplicity of $\sym^{d|\mu|}(V)$ inside $\sym^d(\Sc^\mu(V))$. If $\sym^{d|\mu|}(V)$ appears in $\sym^d(\Sc^\mu(V))$, then there exist vectors of weight $(d|\mu|)$ in $\sym^d(\Sc^\mu(V))$. But we already explained what weights of $\sym^d(\Sc^\mu(V))$ look like. So, if $\sym^{d|\mu|}(V)$ appears in $\sym^d(\Sc^\mu(V))$, then $(d|\mu|)=\chi_1+\dots+\chi_d$ with $\chi_1,\dots,\chi_d\in\wt(\Sc^\mu(V))$. Then, for all $i\in\llbracket 1,d\rrbracket$, $\chi_i=(|\mu|)$. But $(|\mu|)$ is not a weight of $\Sc^\mu(V)$ (because $\ell(\mu)>1$ and the weights of $\Sc^\mu(V)$ are in the convex hull of $W.\mu$, where $W$ denotes the Weyl group of $G$). Thus $\sym^{d|\mu|}(V)$ does not appear in $\sym^d(\Sc^\mu(V))$, which means that $a_{(d),\mu}^{(d|\mu|)}=0$.

\vspace{5mm}

As a consequence, $X_\mu^{ss}(\Lb_{(1),(|\mu|)})=\emptyset$ and there exists $D\in\N$ such that, for all $d\geq D$, $X_\mu^{ss}(\Lb_{\lambda,\nu}+d\Lb_{(1),(|\mu|)})=\emptyset$. Thus, for all $d\geq D$,
\[ a_{\lambda+(d),\mu}^{\nu+(d|\mu|)}=\dim\left(\h^0(X_\mu^{ss}(\Lb_{\lambda,\nu}+d\Lb_{\alpha,\gamma}),\Lb_{\lambda,\nu}+d\Lb_{\alpha,\gamma})^G\right)=0. \]
We recover the result from Proposition \ref{prop_colmenarejo}.

\section{Application to the case of the hyperoctahedral group}\label{section_Bn}

\subsection{Notations and coefficients studied in this case}

For $n\geq 2$, we consider the group $W_n=(\Z/2\Z)^n\rtimes\gs_n$, which is the Weyl group in type $\mathrm{B}_n$ (if we see the root system of type $\mathrm{B}_n$ in $\R^n$ with basis $(\varepsilon_1,\dots,\varepsilon_n)$, $\gs_n$ acts by permuting the $\varepsilon_i$, whereas $1_i=(0,\dots,0,1,0,\dots,0)\in(\Z/2\Z)^n$ acts just by $\varepsilon_i\mapsto -\varepsilon_i$). It is called the hyperoctahedral group, and it is known (cf. \cite{wilson} or \cite{geck-pfeiffer}) that its rational irreducible complex representations can be built up from the ones of $\gs_n$ and are classified by double partitions of $n$. These are ordered pairs of partitions $(\alpha^+,\alpha^-)$ such that $|\alpha^+|+|\alpha^-|=n$.

\vspace{5mm}

When $(\alpha^+,\alpha^-)$ is a double partition, we choose to denote by $M_{\alpha^\pm}$ the associated irreducible representation of $W_{|\alpha^\pm|}$ (where $|\alpha^\pm|$ stands for $|\alpha^+|+|\alpha^-|$). Given two double partitions $(\alpha^+,\alpha^-)$ and $(\beta^+,\beta^-)$ of the same integer, consider the non-negative integers $c_{\alpha^\pm,\beta^\pm}^{\gamma^\pm}$ such that
\[ M_{\alpha^\pm}\otimes M_{\beta^\pm}=\bigoplus_{(\gamma^+,\gamma^-)}M_{\gamma^\pm}^{\oplus c_{\alpha^\pm,\beta^\pm}^{\gamma^\pm}}, \]
where the direct sum runs over all double partitions of $|\alpha^\pm|$.

\subsection{Schur-Weyl duality for $W_n$}

Let $V^+$ and $V^-$ be two complex vector spaces and set $V=V^+\oplus V^-$. Then the groups $\gl(V^\pm)=\gl(V^+)\times\gl(V^-)$ and $W_n$ act on $V^{\otimes n}$. (For $W_n$, $\gs_n$ acts simply by permuting the factors in $V^{\otimes n}$, and $1_i\in(\Z/2\Z)^n$ acts by multiplying by $-1$ the $i$-th factor in $V^{\otimes n}$.) Furthermore, these two actions commute and thus $\gl(V^\pm)\times W_n$ acts on $V^{\otimes n}$.

\begin{prop}
As a representation of $\gl(V^\pm)\times W_n$, $V^{\otimes n}$ decomposes as a direct sum of irreducible representations in the following way:
\[ V^{\otimes n}=\bigoplus_{(\alpha^+,\alpha^-)}V_{\alpha^\pm}(\gl(V^\pm))\otimes M_{\alpha^\pm}, \]
where the direct sum runs over all double partitions of $n$ such that $\ell(\alpha^+)\leq\dim(V^+)$ and $\ell(\alpha^-)\leq\dim(V^-)$. Moreover, $V_{\alpha^\pm}(\gl(V^\pm))$ denotes the irreducible representation $\Sc^{\alpha^+}(V^+)\otimes\Sc^{\alpha^-}(V^-)$ of $\gl(V^\pm)$.
\end{prop}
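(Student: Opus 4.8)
The plan is to reduce everything to the classical (type $\mathrm{A}$) Schur--Weyl duality applied separately to $V^+$ and $V^-$, and to combine it with the wreath-product structure $W_n=\Z/2\Z\wr\gs_n$ together with the realisation of the irreducible $W_n$-modules as induced representations. The starting observation is that, recalling that $V^+$ and $V^-$ are respectively the $+1$- and $-1$-eigenspaces of the involution by which each generator $1_i$ acts on the $i$-th tensor factor, the space $V^{\otimes n}=(V^+\oplus V^-)^{\otimes n}$ decomposes, as a representation of $\gl(V^\pm)\times(\Z/2\Z)^n$, into the weight spaces for $(\Z/2\Z)^n$: for a pattern $\delta\in\{+,-\}^n$, set $V^\delta=W_1\otimes\dots\otimes W_n$ with $W_i=V^+$ if $\delta_i=+$ and $W_i=V^-$ if $\delta_i=-$. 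Then $V^{\otimes n}=\bigoplus_\delta V^\delta$, the factor $\gl(V^\pm)$ preserves each $V^\delta$, and $\gs_n$ permutes the summands by $\sigma\cdot V^\delta=V^{\sigma\delta}$.

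Next I would group the summands according to $k=\#\{i:\delta_i=+\}$, setting $U_k=\bigoplus_{\delta\,:\,\#\{i:\delta_i=+\}=k}V^\delta$, so that each $U_k$ is stable under both $\gl(V^\pm)$ and $W_n$ and $V^{\otimes n}=\bigoplus_{k=0}^n U_k$. The key point is that $W_n$ acts transitively on the patterns with exactly $k$ plus-signs (already under $\gs_n$, since $(\Z/2\Z)^n$ fixes every pattern), and the stabiliser of the reference pattern $\delta_0=(+,\dots,+,-,\dots,-)$ is exactly $(\Z/2\Z)^n\rtimes(\gs_k\times\gs_{n-k})=W_k\times W_{n-k}$. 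By the standard criterion realising an induced module as a direct sum of transitively permuted subspaces with prescribed stabiliser, this gives a $\gl(V^\pm)\times W_n$-isomorphism
\[ U_k\simeq\mathrm{Ind}_{W_k\times W_{n-k}}^{W_n}\big((V^+)^{\otimes k}\otimes(V^-)^{\otimes(n-k)}\big), \]
where $W_k$ acts on $(V^+)^{\otimes k}$ with $\gs_k$ permuting factors and $(\Z/2\Z)^k$ trivially, $W_{n-k}$ acts on $(V^-)^{\otimes(n-k)}$ with $\gs_{n-k}$ permuting factors and $(\Z/2\Z)^{n-k}$ by its product-of-signs character, and $\gl(V^\pm)$ acts factorwise and is left untouched by the induction (since it commutes with all of $W_n$).

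I would then feed in the classical Schur--Weyl duality for each block, $(V^+)^{\otimes k}\simeq\bigoplus_{\alpha^+\vdash k}\Sc^{\alpha^+}(V^+)\otimes M_{\alpha^+}$ (the sum restricted to $\ell(\alpha^+)\leq\dim V^+$, the constraint under which $\Sc^{\alpha^+}(V^+)\neq 0$), and likewise for $V^-$. Decorating the Specht modules with the relevant $(\Z/2\Z)$-characters — trivial on the $V^+$-block, sign on the $V^-$-block — upgrades $M_{\alpha^+}$ and $M_{\alpha^-}$ to irreducibles over $W_k$ and $W_{n-k}$; and since induction is additive and commutes with tensoring by the fixed $\gl(V^\pm)$-factor $\Sc^{\alpha^+}(V^+)\otimes\Sc^{\alpha^-}(V^-)=V_{\alpha^\pm}(\gl(V^\pm))$, one gets
\[ U_k\simeq\bigoplus_{\alpha^+\vdash k,\ \alpha^-\vdash n-k}V_{\alpha^\pm}(\gl(V^\pm))\otimes\mathrm{Ind}_{W_k\times W_{n-k}}^{W_n}\big(M_{\alpha^+}[\mathbf 1]\boxtimes M_{\alpha^-}[\mathrm{sgn}]\big), \]
with the length conditions $\ell(\alpha^+)\leq\dim V^+$, $\ell(\alpha^-)\leq\dim V^-$ understood. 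Summing over $0\leq k\leq n$ ranges over all double partitions of $n$ with the stated constraints, which is precisely the shape of the claimed decomposition.

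The one input that does the real work — and which I regard as the main obstacle if one insists on self-containedness — is the identification of $\mathrm{Ind}_{W_k\times W_{n-k}}^{W_n}\big(M_{\alpha^+}[\mathbf 1]\boxtimes M_{\alpha^-}[\mathrm{sgn}]\big)$ with the irreducible $M_{\alpha^\pm}$ attached to $(\alpha^+,\alpha^-)$, and in particular the irreducibility of this induced module. This is exactly the Clifford-theoretic classification of the irreducibles of the wreath product $\Z/2\Z\wr\gs_n$ (verified through Mackey's irreducibility criterion), i.e. the construction recalled in \cite{wilson} or \cite{geck-pfeiffer}; I would simply cite it, fixing conventions so that $M_{\alpha^\pm}$ is the module built from the trivial character on the $\alpha^+$-part and the sign character on the $\alpha^-$-part. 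As an alternative avoiding the explicit induction, one may argue by the double commutant theorem: classical Schur--Weyl for $V^+$ and $V^-$ together with Schur's lemma shows that $\mathrm{End}_{\gl(V^\pm)}(V^{\otimes n})$ is spanned by the type-preserving partial permutations, and the idempotents $\prod_i\frac{1\pm 1_i}{2}\in\C[(\Z/2\Z)^n]$ exhibit each of these as lying in the image of $\C[W_n]$; the double centraliser theorem then yields the decomposition with $N_{\alpha^\pm}:=\Hom_{\gl(V^\pm)}(V_{\alpha^\pm},V^{\otimes n})$ irreducible over $W_n$, again to be matched with $M_{\alpha^\pm}$ by the same classification.
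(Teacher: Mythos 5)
Your proof is correct, and it takes a genuinely different route from the paper: the paper's entire proof of this proposition is a citation (``This result comes from \cite{sakamoto-shoji}''), i.e.\ it invokes Sakamoto--Shoji's Schur--Weyl reciprocity for Ariki--Koike algebras, of which the statement is the $r=2$, $q=1$ specialisation. You instead reduce everything to type A: the weight-space decomposition $V^{\otimes n}=\bigoplus_{\delta}V^{\delta}$ under $(\Z/2\Z)^n$, the identification of each block $U_k$ with $\mathrm{Ind}_{W_k\times W_{n-k}}^{W_n}\big((V^+)^{\otimes k}\otimes(V^-)^{\otimes(n-k)}\big)$ (your stabiliser computation $(\Z/2\Z)^n\rtimes(\gs_k\times\gs_{n-k})=W_k\times W_{n-k}$ is right, and the induction isomorphism is indeed equivariant for the commuting $\gl(V^\pm)$-action, since that action preserves each summand), classical Schur--Weyl duality inside each tensor block, and finally the Clifford-theoretic realisation of the irreducibles of $\Z/2\Z\wr\gs_n$ as $\mathrm{Ind}_{W_k\times W_{n-k}}^{W_n}\big(M_{\alpha^+}[\mathbf 1]\boxtimes M_{\alpha^-}[\mathrm{sgn}]\big)$, which you rightly treat as a citable input: it is exactly the classification the paper itself assumes (via \cite{wilson} or \cite{geck-pfeiffer}) in order to define $M_{\alpha^\pm}$ in the first place, so your proof costs nothing extra in terms of prerequisites. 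What the paper's citation buys is brevity and access to a more general framework (arbitrary $r$ and the quantum deformation); what your argument buys is a self-contained proof modulo only that classification, together with an explicit fixing of the convention that $\alpha^+$ carries the trivial $(\Z/2\Z)$-character and $\alpha^-$ the product-of-signs character --- a convention the paper leaves implicit even though the indexing of the decomposition depends on it. One further merit: you silently corrected the paper's imprecise description of the action, reading $1_i$ as acting on the $i$-th factor by the involution $\mathrm{id}_{V^+}\oplus(-\mathrm{id}_{V^-})$ rather than by the scalar $-1$ on the whole factor (which would make $(\Z/2\Z)^n$ act through a single character and render the proposition false); this is the reading under which both your proof and the cited result apply.
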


\begin{proof}
This result comes from \cite{sakamoto-shoji}.
\end{proof}

\vspace{5mm}

We now consider complex vector spaces $V_1=V_1^+\oplus V_1^-$ and $V_2=V_2^+\oplus V_2^-$ and we set $\gl(V_1^\pm)=\gl(V_1^+)\times\gl(V_1^-)$, $\gl(V_2^\pm)=\gl(V_2^+)\times\gl(V_2^-)$. Then, on the one hand,
\vspace{-5mm}
\begin{changemargin}{-3mm}{-3mm}
\[ \begin{array}{rcl}
V_1^{\otimes n}\otimes V_2^{\otimes n} & = & \left(\dps\bigoplus_{(\alpha^+,\alpha^-)}V_{\alpha^\pm}(\gl(V_1^\pm))\otimes M_{\alpha^\pm}\right)\otimes\left(\dps\bigoplus_{(\beta^+,\beta^-)}V_{\beta^\pm}(\gl(V_2^\pm))\otimes M_{\beta^\pm}\right)\\
 & = & \dps\bigoplus_{\alpha^\pm,\beta^\pm,\gamma^\pm}\left(\Sc^{\alpha^+}(V_1^+)\otimes\Sc^{\alpha^-}(V_1^-)\otimes\Sc^{\beta^+}(V_2^+)\otimes\Sc^{\beta^-}(V_2^-)\otimes M_{\gamma^\pm}\right)^{\oplus c_{\alpha^\pm,\beta^\pm}^{\gamma^\pm}},
\end{array} \]
\end{changemargin}
with the direct sum concerning all triples $(\alpha^+,\alpha^-),(\beta^+,\beta^-),(\gamma^+,\gamma^-)$ of double partitions of $n$.\\
On the other hand,
\[ V_1^{\otimes n}\otimes V_2^{\otimes n}=(V_1\otimes V_2)^{\otimes n}=\dps\bigoplus_{(\gamma^+,\gamma^-)}V_{\gamma^\pm}(\gl(V^\pm))\otimes M_{\gamma^\pm}, \]
where $\gl(V^\pm)=\gl(V^+)\times\gl(V^-)$ and $V^+=V_1^+\otimes V_2^+\oplus V_1^-\otimes V_2^-$, $V^-=V_1^+\otimes V_2^-\oplus V_1^-\otimes V_2^+$ (then $V_1\otimes V_2=V^+\oplus V^-$).\\
Moreover, one has a branching
\[ \underset{\text{denoted by }G}{\underbrace{\gl(V_1^+)\times\gl(V_1^-)\times\gl(V_2^+)\times\gl(V_2^-)}}\longrightarrow\underset{\text{denoted by }\hat{G}}{\underbrace{\gl(V^+)\times\gl(V^-)}} \]
and then a decomposition
\[ V_{\gamma^\pm}(\gl(V^\pm))=\bigoplus_{(\alpha^+,\alpha^-),(\beta^+,\beta^-)}\left(\Sc^{\alpha^+}(V_1^+)\otimes\Sc^{\alpha^-}(V_1^-)\otimes\Sc^{\beta^+}(V_2^+)\otimes\Sc^{\beta^-}(V_2^-)\right)^{\oplus\dots}. \]
Thus, by identification:

\begin{prop}
The coefficients $c_{\alpha^\pm,\beta^\pm}^{\gamma^\pm}$ are also the coefficients in the branching situation $G\rightarrow\hat{G}$, i.e. for all double partition $(\gamma^+,\gamma^-)$,
\vspace{-5mm}
\begin{changemargin}{-4mm}{-4mm}
\[ \Sc^{\gamma^+}(V^+)\otimes\Sc^{\gamma^-}(V^-)=\bigoplus_{(\alpha^+,\alpha^-),(\beta^+,\beta^-)}\left(\Sc^{\alpha^+}(V_1^+)\otimes\Sc^{\alpha^-}(V_1^-)\otimes\Sc^{\beta^+}(V_2^+)\otimes\Sc^{\beta^-}(V_2^-)\right)^{\oplus c_{\alpha^\pm,\beta^\pm}^{\gamma^\pm}}. \]
\end{changemargin}
\end{prop}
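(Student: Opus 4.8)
The plan is to prove the identity by matching multiplicities in the two decompositions of $V_1^{\otimes n}\otimes V_2^{\otimes n}$ obtained above, both regarded as modules over $G\times W_n$ (with $W_n$ acting diagonally in the first decomposition and naturally in the second). First I would make precise that the canonical reordering isomorphism $V_1^{\otimes n}\otimes V_2^{\otimes n}\simeq(V_1\otimes V_2)^{\otimes n}$ is equivariant for these two $W_n$-actions. The action of $\gs_n$ matches because permuting the $V_1$- and $V_2$-factors simultaneously is the same as permuting the factors of $V_1\otimes V_2$; and the action of $1_i\in(\Z/2\Z)^n$ matches because the product of the sign characters on the $i$-th factors of $V_1$ and of $V_2$ equals $+1$ exactly on $V_1^+\otimes V_2^+\oplus V_1^-\otimes V_2^-=V^+$ and $-1$ exactly on $V_1^+\otimes V_2^-\oplus V_1^-\otimes V_2^+=V^-$. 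This is precisely the grading used to apply Schur--Weyl duality for $W_n$ to $V_1\otimes V_2=V^+\oplus V^-$, so the two $W_n$-module structures indeed coincide.

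Granting this, the first decomposition exhibits $c_{\alpha^\pm,\beta^\pm}^{\gamma^\pm}$ as the multiplicity, in $V_1^{\otimes n}\otimes V_2^{\otimes n}$, of the module $\Sc^{\alpha^+}(V_1^+)\otimes\Sc^{\alpha^-}(V_1^-)\otimes\Sc^{\beta^+}(V_2^+)\otimes\Sc^{\beta^-}(V_2^-)\otimes M_{\gamma^\pm}$. On the other hand, substituting the branching decomposition of $V_{\gamma^\pm}(\gl(V^\pm))$ into the second decomposition rewrites the same module as a direct sum in which the multiplicity of that very same module is the branching coefficient (call it $d_{\alpha^\pm,\beta^\pm}^{\gamma^\pm}$). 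Here $W_n$ acts only on the $M_{\gamma^\pm}$ factor, since it commutes with $\hat{G}$ and hence with $G$, so this is genuinely a decomposition as $G\times W_n$-modules.

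The conclusion $c=d$ then follows from the uniqueness of multiplicities in a decomposition into irreducibles: each $\Sc^{\alpha^+}(V_1^+)\otimes\Sc^{\alpha^-}(V_1^-)\otimes\Sc^{\beta^+}(V_2^+)\otimes\Sc^{\beta^-}(V_2^-)$ is an irreducible representation of $G$, being an external tensor product of irreducibles of the four $\gl$-factors; $M_{\gamma^\pm}$ is an irreducible $W_n$-module; so their tensor product is an irreducible $G\times W_n$-module, and distinct labels $(\alpha^\pm,\beta^\pm,\gamma^\pm)$ give pairwise non-isomorphic such modules. By complete reducibility the multiplicity of each is well defined, forcing $c_{\alpha^\pm,\beta^\pm}^{\gamma^\pm}=d_{\alpha^\pm,\beta^\pm}^{\gamma^\pm}$, which is the claimed identity (valid once $\dim V_1^\pm$ and $\dim V_2^\pm$ are large enough that all relevant double partitions occur). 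The only genuinely delicate point is the $W_n$-equivariance of the reordering isomorphism, i.e. the sign-character bookkeeping on the $\pm$-graded pieces; everything else is a formal identification.
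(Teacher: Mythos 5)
Your proof is correct and follows essentially the same route as the paper: the paper derives the same two decompositions of $V_1^{\otimes n}\otimes V_2^{\otimes n}$ (diagonal $W_n$-action versus Schur--Weyl for $V_1\otimes V_2=V^+\oplus V^-$ composed with the branching $\hat{G}\to G$) and concludes simply ``by identification.'' What you add --- the check that the reordering isomorphism is $W_n$-equivariant thanks to the sign bookkeeping defining $V^\pm$, and the appeal to uniqueness of multiplicities of the pairwise non-isomorphic irreducible $G\times W_n$-modules --- is precisely the content that the paper's one-line identification leaves implicit, so your write-up is a more detailed version of the same argument.
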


\vspace{5mm}

This new expression yields, by Schur's Lemma,
\vspace{-5mm}
\begin{changemargin}{-3mm}{-3mm}
\[ c_{\alpha^\pm,\beta^\pm}^{\gamma^\pm}=\dim\left(\Sc^{\gamma^+}(V^+)\otimes\Sc^{\gamma^-}(V^-)\otimes\Sc^{\alpha^+}(V_1^+)^*\otimes\Sc^{\alpha^-}(V_1^-)^*\otimes\Sc^{\beta^+}(V_2^+)^*\otimes\Sc^{\beta^-}(V_2^-)^*\right)^G. \]
\end{changemargin}
And finally we prove the following proposition:

\begin{prop}\label{link_line_bundles_Bn}
Let $(\alpha^+,\alpha^-)$, $(\beta^+,\beta^-)$, and $(\gamma^+,\gamma^-)$ be three double partitions of the same integer. Then there exist a complex reductive group $G$ acting on a projective variety $X$, and a $G$-linearised line bundle $\Lb_{\alpha^\pm,\beta^\pm,\gamma^\pm}$ over $X$ such that
\[ c_{\alpha^\pm,\beta^\pm}^{\gamma^\pm}=\dim\left(\h^0(X,\Lb_{\alpha^\pm,\beta^\pm,\gamma^\pm})^G\right). \]
\end{prop}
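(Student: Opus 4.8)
The plan is to follow verbatim the strategy used for the Kronecker coefficients in the first proposition of Section \ref{link_with_line_bundles}, feeding it the branching expression for $c_{\alpha^\pm,\beta^\pm}^{\gamma^\pm}$ obtained just before the statement. First I would fix four complex vector spaces $V_1^+,V_1^-,V_2^+,V_2^-$ of dimensions large enough that $\ell(\alpha^+)\leq\dim V_1^+$, $\ell(\alpha^-)\leq\dim V_1^-$, $\ell(\beta^+)\leq\dim V_2^+$, $\ell(\beta^-)\leq\dim V_2^-$, and moreover $\ell(\gamma^+)\leq\dim V^+$ and $\ell(\gamma^-)\leq\dim V^-$, where $V^\pm$ are the spaces introduced above. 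With these choices the Schur-functor identity preceding the statement reads
\[ c_{\alpha^\pm,\beta^\pm}^{\gamma^\pm}=\dim\left(\Sc^{\gamma^+}(V^+)\otimes\Sc^{\gamma^-}(V^-)\otimes\Sc^{\alpha^+}(V_1^+)^*\otimes\Sc^{\alpha^-}(V_1^-)^*\otimes\Sc^{\beta^+}(V_2^+)^*\otimes\Sc^{\beta^-}(V_2^-)^*\right)^G, \]
where $G=\gl(V_1^+)\times\gl(V_1^-)\times\gl(V_2^+)\times\gl(V_2^-)$.

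Next I would apply Borel-Weil's Theorem to each of the six tensor factors. For the four dualised factors it gives $\Sc^{\alpha^+}(V_1^+)^*\simeq\h^0(\fl(V_1^+),\Lb_{\alpha^+})$ and likewise for the three others, while for the two remaining factors it yields $\Sc^{\gamma^+}(V^+)\simeq\h^0(\fl(V^+),\Lb_{\gamma^+}^*)$ and $\Sc^{\gamma^-}(V^-)\simeq\h^0(\fl(V^-),\Lb_{\gamma^-}^*)$. The point needing attention is that $G$ acts on the last two flag varieties not through its own coordinate factors but through the branching homomorphism $G\to\hat{G}=\gl(V^+)\times\gl(V^-)$ described above; since this is a morphism of reductive groups, pulling back the $\hat{G}$-linearised bundles $\Lb_{\gamma^+}^*$ and $\Lb_{\gamma^-}^*$ along it makes them $G$-linearised, so that all six factors become $G$-linearised line bundles over their respective flag varieties.

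I would then set
\[ X=\fl(V_1^+)\times\fl(V_1^-)\times\fl(V_2^+)\times\fl(V_2^-)\times\fl(V^+)\times\fl(V^-), \]
a projective variety carrying a $G$-action, and take $\Lb_{\alpha^\pm,\beta^\pm,\gamma^\pm}$ to be the (external) tensor product $\Lb_{\alpha^+}\otimes\Lb_{\alpha^-}\otimes\Lb_{\beta^+}\otimes\Lb_{\beta^-}\otimes\Lb_{\gamma^+}^*\otimes\Lb_{\gamma^-}^*$ of the six bundles pulled back to $X$, which is a $G$-linearised line bundle. By the Künneth formula, $\h^0(X,\Lb_{\alpha^\pm,\beta^\pm,\gamma^\pm})$ is $G$-equivariantly isomorphic to the tensor product of the six spaces of sections, that is, to the six-fold tensor product displayed above. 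Passing to $G$-invariants and comparing dimensions then gives exactly $c_{\alpha^\pm,\beta^\pm}^{\gamma^\pm}=\dim\left(\h^0(X,\Lb_{\alpha^\pm,\beta^\pm,\gamma^\pm})^G\right)$.

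The argument is essentially routine once the set-up is in place; the only place where it departs from the Kronecker case, and hence the main obstacle, is the bookkeeping of the $G$-action on the two factors $\fl(V^+)$ and $\fl(V^-)$, which must be taken through the branching map $G\to\hat{G}$ rather than the obvious coordinate action. Checking that this branching action is compatible with the $G$-linearisation of $\Lb_{\gamma^+}^*$ and $\Lb_{\gamma^-}^*$, and that the $G$-invariants produced by Künneth genuinely coincide with those appearing in the branching formula, is the single step requiring care.
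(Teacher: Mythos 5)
Your proposal is correct and is essentially the paper's own proof: the paper takes exactly the same $G=\gl(V_1^+)\times\gl(V_1^-)\times\gl(V_2^+)\times\gl(V_2^-)$, the same product $X$ of the six complete flag varieties, and the same bundle $\Lb_{\alpha^+}\otimes\Lb_{\alpha^-}\otimes\Lb_{\beta^+}\otimes\Lb_{\beta^-}\otimes\Lb^*_{\gamma^+}\otimes\Lb^*_{\gamma^-}$, invoking Borel--Weil on the branching expression just as you do. The only difference is one of exposition: the paper leaves implicit the points you spell out (the K\"unneth identification and the $G$-linearisation of $\Lb^*_{\gamma^\pm}$ by pullback along the branching map $G\to\hat{G}$), and your explicit dimension conditions $\ell(\gamma^\pm)\leq\dim V^\pm$ are a harmless, slightly more careful addition.
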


\begin{proof}
According to what precedes and thanks to Borel-Weil's Theorem, it is sufficient to consider complex vector spaces $V_1^+$, $V_1^-$, $V_2^+$, and $V_2^-$ such that $\dim(V_1^+)\geq\ell(\alpha^+)$, $\dim(V_1^-)\geq\ell(\alpha^-)$, $\dim(V_2^+)\geq\ell(\beta^+)$, and $\dim(V_2^-)\geq\ell(\beta^-)$. Then one sets
\vspace{-5mm}
\begin{changemargin}{-8mm}{-8mm}
\[ X=\fl(V_1^+)\times\fl(V_1^-)\times\fl(V_2^+)\times\fl(V_2^-)\times\fl(\underset{V^+}{\underbrace{V_1^+\otimes V_2^+\oplus V_1^-\otimes V_2^-}})\times\fl(\underset{V^-}{\underbrace{V_1^+\otimes V_2^-\oplus V_1^-\otimes V_2^+}}), \]
\end{changemargin}
\[ G=\gl(V_1^+)\times\gl(V_1^-)\times\gl(V_2^+)\times\gl(V_2^-), \]
and
\[ \Lb_{\alpha^\pm,\beta^\pm,\gamma^\pm}=\Lb_{\alpha^+}\otimes\Lb_{\alpha^-}\otimes\Lb_{\beta^+}\otimes\Lb_{\beta^-}\otimes\Lb^*_{\gamma^+}\otimes \Lb^*_{\gamma^-}. \]
\end{proof}

\subsection{Stability results and analogue of Murnaghan's stability}

\subsubsection{General result and examples}

According to the previous section, we find ourselves in the same situation as for Kronecker coefficients. As a consequence, the same proof as in Section \ref{general_section} can be applied here.

\begin{theo}\label{general_result_Bn}
If $\alpha^\pm=(\alpha^+,\alpha^-)$, $\beta^\pm=(\beta^+,\beta^-)$, and $\gamma^\pm=(\gamma^+,\gamma^-)$ are three double partitions such that
\[ \forall d\in\N^*, \: c_{d\alpha^\pm,d\beta^\pm}^{d\gamma^\pm}=1, \]
then the triple they form is stable in the sense that, for every double partitions $\lambda^\pm=(\lambda^+,\lambda^-)$, $\mu^\pm=(\mu^+,\mu^-)$, and $\nu^\pm=(\nu^+,\nu^-)$, the sequence
\[ \left(c_{\lambda^\pm+d\alpha^\pm,\mu^\pm+d\beta^\pm}^{\nu^\pm+d\gamma^\pm}\right)_{d\in\N} \]
stabilises for $d$ large enough.
\end{theo}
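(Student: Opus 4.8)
The plan is to reproduce verbatim the argument of Section \ref{general_section}, since Proposition \ref{link_line_bundles_Bn} places us in exactly the same geometric situation as in the Kronecker case. First I would apply that proposition to both triples. To the weakly stable triple $(\alpha^\pm,\beta^\pm,\gamma^\pm)$ it associates the reductive group $G=\gl(V_1^+)\times\gl(V_1^-)\times\gl(V_2^+)\times\gl(V_2^-)$, the projective variety
\[ X=\fl(V_1^+)\times\fl(V_1^-)\times\fl(V_2^+)\times\fl(V_2^-)\times\fl(V^+)\times\fl(V^-), \]
and a $G$-linearised line bundle which I denote by $\Lb$; to $(\lambda^\pm,\mu^\pm,\nu^\pm)$ it associates, over the same $G$ and $X$ after enlarging the $V_i^\pm$ so that all lengths occurring are dominated, a line bundle $\Mb$. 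Because adding $d\alpha^\pm$ to $\lambda^\pm$ leaves the lengths bounded independently of $d$, such a choice works uniformly in $d$, and Schur's Lemma guarantees the resulting coefficient is insensitive to the auxiliary dimensions. One then reads off, for every $d\in\N$,
\[ c_{\lambda^\pm+d\alpha^\pm,\mu^\pm+d\beta^\pm}^{\nu^\pm+d\gamma^\pm}=\dim\h^0(X,\Mb+d\Lb)^G. \]

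Next I would translate the weak stability hypothesis. The condition $c_{d\alpha^\pm,d\beta^\pm}^{d\gamma^\pm}=1$ for all $d\in\N^*$ says precisely that $\h^0(X,\Lb^{\otimes d})^G\simeq\C$ for all $d$, which is the only property of the weakly stable triple used in Section \ref{general_section}. Since $X$ is a product of flag varieties, both $\Lb$ and $\Mb$ are semi-ample, so Proposition \ref{prop_teleman}, the polytopal form of the Hilbert--Mumford criterion, and the Luna slice argument all apply unchanged. I would then invoke Proposition \ref{prop_semi-stable} to obtain an integer $D$ with $X^{ss}(\Mb+d\Lb)\subset X^{ss}(\Lb)$ for all $d\geq D$, and finally Proposition \ref{inclusion_of_ss_points_suffices}, whose proof rests on Luna's Slice \'Etale Theorem and on the triviality of $\Lb$ on the slice $S$, to conclude that $\h^0(X,\Mb+d\Lb)^G\simeq\h^0(S,\Mb)^H$ for all $d\geq D$. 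As the right-hand side is independent of $d$, the sequence stabilises, which is the asserted stability of the triple.

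The only points needing genuine verification, and hence the main (mild) obstacle, are that the hypotheses of the imported machinery really do transfer to the type $\mathrm{B}_n$ setting: that $G$ is reductive (it is a product of general linear groups), and that $X$ is again a product of flag varieties, so that both the reduction of Proposition \ref{prop_teleman} to an ample bundle on a quotient flag variety and the description of $X^T$ by irreducible components used in the semi-stability criterion remain valid. Since the dictionary of Proposition \ref{link_line_bundles_Bn} already encodes all the structural input, none of these checks presents a real difficulty, and the theorem follows from the work carried out in Section \ref{general_section}.
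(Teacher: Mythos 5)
Your proposal is correct and follows exactly the route the paper itself takes: the paper's entire proof of Theorem \ref{general_result_Bn} consists of observing, via Proposition \ref{link_line_bundles_Bn}, that one is in the same situation as for Kronecker coefficients (a reductive group acting on a product of flag varieties, with the coefficients realised as dimensions of invariant sections of semi-ample line bundles satisfying $\Mb+d\Lb=\Lb_{\lambda^\pm+d\alpha^\pm,\mu^\pm+d\beta^\pm,\nu^\pm+d\gamma^\pm}$), and then rerunning the argument of Section \ref{general_section} verbatim, i.e.\ Proposition \ref{prop_semi-stable} followed by Proposition \ref{inclusion_of_ss_points_suffices}. Your additional checks (uniformity of the auxiliary dimensions in $d$, reductivity of $G$, and $X$ being a product of flag varieties so that the semi-ampleness and Luna slice arguments transfer) are precisely the points the paper leaves implicit, so the proposal matches the paper's proof in both substance and structure.
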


\paragraph{Example 1:} There is in this situation an analogue of Murnaghan's stability. It has already been observed and proven in \cite{wilson}, and we retrieve it here: according for instance to Proposition \ref{link_line_bundles_Bn}, we notice that
\[ c_{\big((1),\emptyset\big),\big((1),\emptyset\big)}^{\big((1),\emptyset\big)}=g_{(1),(1),(1)} \]
($\emptyset$ here stands for the empty partition, of size and length zero). Then we can apply the previous theorem to conclude that, for all double partitions $(\lambda^+,\lambda^-)$, $(\mu^+,\mu^-)$, and $(\nu^+,\nu^-)$ of the same integer, if we increase repetitively by one the first part of the partitions $\lambda^+$, $\mu^+$, and $\nu^+$, the associated sequence of coefficients $c$ eventually stabilises.

\paragraph{Example 2:} Let us consider the following triple of double partitions:
\[ \Big(\big((2),(2)\big),\big((2),(2)\big),\big((2),(2)\big)\Big) \]

\begin{lemma}
For all $d\in\N^*$,
\[ c_{d\big((2),(2)\big),d\big((2),(2)\big)}^{d\big((2),(2)\big)}=1. \]
\end{lemma}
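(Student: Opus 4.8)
The plan is to compute the coefficient head-on, using the invariant-theoretic description established just before Proposition \ref{link_line_bundles_Bn}. Since $d\big((2),(2)\big)=\big((2d),(2d)\big)$, I would specialise that formula with $\alpha^\pm=\beta^\pm=\gamma^\pm=\big((2d),(2d)\big)$, so that
\[ c_{d((2),(2)),d((2),(2))}^{d((2),(2))}=\dim\left(\Sc^{(2d)}(V^+)\otimes\Sc^{(2d)}(V^-)\otimes\Sc^{(2d)}(V_1^+)^*\otimes\Sc^{(2d)}(V_1^-)^*\otimes\Sc^{(2d)}(V_2^+)^*\otimes\Sc^{(2d)}(V_2^-)^*\right)^G. \]
Because every partition occurring has length one, the dimension constraints are satisfied as soon as each space is at least one-dimensional, so I would take $V_1^+,V_1^-,V_2^+,V_2^-$ all \emph{one}-dimensional, spanned respectively by $x_1,y_1,x_2,y_2$. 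Then $G=\gl(V_1^+)\times\cdots\times\gl(V_2^-)$ is a rank-four torus, the four factors $\Sc^{(2d)}(V_i^\pm)^*=\sym^{2d}(V_i^\pm)^*$ are each one-dimensional, and $V^+=\C(x_1\otimes x_2)\oplus\C(y_1\otimes y_2)$, $V^-=\C(x_1\otimes y_2)\oplus\C(y_1\otimes x_2)$ are two-dimensional, with $\Sc^{(2d)}=\sym^{2d}$.

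The computation then reduces to weight bookkeeping. I would record the $G$-weight of each tensor factor: the four one-dimensional factors contribute fixed inverse weights (each of total degree $2d$ in the corresponding torus coordinate), while $\sym^{2d}(V^+)$ has a monomial basis indexed by $i\in\{0,\dots,2d\}$ (the exponent of $x_1\otimes x_2$) and $\sym^{2d}(V^-)$ a monomial basis indexed by $j\in\{0,\dots,2d\}$ (the exponent of $x_1\otimes y_2$). A $G$-invariant is exactly a weight-zero vector, which forces the $G$-weight coming from $\sym^{2d}(V^+)\otimes\sym^{2d}(V^-)$ to cancel the (unique, fixed) weight of the four dual factors. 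Writing out the four resulting linear equations in $i$ and $j$, I expect them to collapse to the two conditions $i+j=2d$ and $i=j$, whose only solution is $i=j=d$.

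Consequently the space of invariants is spanned by the single weight vector attached to $i=j=d$, giving dimension $1$, as claimed. I do not anticipate any genuine obstacle: the only points requiring care are the correct identification of $V^+$ and $V^-$ (and hence of the torus weights of $x_1\otimes x_2$, $y_1\otimes y_2$, $x_1\otimes y_2$, $y_1\otimes x_2$) and the verification that one-dimensional building spaces meet the length hypotheses of the coefficient formula; the subsequent linear algebra is entirely routine. One could alternatively argue on the $W_{4d}$ side directly, but the line-bundle/branching description used throughout Section \ref{section_Bn} makes the weight count the most transparent route and is moreover precisely the input needed to apply Theorem \ref{general_result_Bn}.
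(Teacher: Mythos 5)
Your proof is correct, but it takes a genuinely different route from the paper's. The paper keeps $V_1^\pm,V_2^\pm$ of large dimension, expands $\sym^{2d}(V_1^+\otimes V_2^+\oplus V_1^-\otimes V_2^-)$ and $\sym^{2d}(V_1^+\otimes V_2^-\oplus V_1^-\otimes V_2^+)$ via the Cauchy formula, and reduces the coefficient to the sum $\sum(\mathrm{LR})_{\lambda^+,\mu^+}^{(2d)}\,(\mathrm{LR})_{\lambda^+,\mu^-}^{(2d)}\,(\mathrm{LR})_{\lambda^-,\mu^-}^{(2d)}\,(\mathrm{LR})_{\lambda^-,\mu^+}^{(2d)}$ over quadruples of partitions of $d$; the Littlewood--Richardson rule then kills every term except $\lambda^\pm=\mu^\pm=(d)$, which contributes $1$. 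You instead shrink all four building spaces to dimension one, so that $G$ becomes a rank-four torus, $V^\pm\simeq\C^2$, and the coefficient becomes the multiplicity of a single torus weight in $\sym^{2d}(\C^2)\otimes\sym^{2d}(\C^2)$; your bookkeeping is right (the $V_1^+$ and $V_1^-$ equations both read $i+j=2d$, the $V_2^+$ and $V_2^-$ equations both read $i=j$, forcing $i=j=d$). The step carrying the real content in your version is the specialisation itself: identifying $c_{\alpha^\pm,\beta^\pm}^{\gamma^\pm}$ with a dimension of invariants over given spaces requires not only $\dim V_1^\pm\geq\ell(\alpha^\pm)$ and $\dim V_2^\pm\geq\ell(\beta^\pm)$ (the hypotheses visible in Proposition \ref{link_line_bundles_Bn}) but also $\ell(\gamma^\pm)\leq\dim V^\pm$, since otherwise $\Sc^{\gamma^\pm}(V^\pm)$ vanishes and the invariant count is $0$ regardless of the true coefficient; here every partition is a single row and $\dim V^\pm=2$, so all constraints hold and the specialisation is legitimate (one can also check that whenever the $\alpha,\beta$ constraints hold and the coefficient is nonzero, the $\gamma$ constraint holds automatically, by embedding $M_{\gamma^\pm}$ into $(V_1\otimes V_2)^{\otimes n}$). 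What your route buys is economy: no Cauchy formula, no LR coefficients, just a short weight count. What the paper's route buys is generality: it evaluates the coefficient with spaces of arbitrary dimension, producing a closed formula as a sum of LR products whose pattern transfers to other candidate triples, including those with partitions of length greater than one, where your torus trick is unavailable because the group can no longer be taken abelian.
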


\begin{proof}
Let us set $d\in\N^*$. We proved that the coefficient $c_{d\big((2),(2)\big),d\big((2),(2)\big)}^{d\big((2),(2)\big)}$ is the multiplicity of
\[ \sym^{2d}(V_1^+)\otimes\sym^{2d}(V_2^+)\otimes\sym^{2d}(V_1^-)\otimes\sym^{2d}(V_2^-) \]
in
\[ \sym^{2d}(V_1^+\otimes V_2^+\oplus V_1^-\otimes V_2^-)\otimes\sym^{2d}(V_1^+\otimes V_2^-\oplus V_1^-\otimes V_2^+) \]
(if $V_1^+$, $V_1^-$, $V_2^+$, and $V_2^-$ are large enough vector spaces). But we have (cf. for example \cite{fulton-harris}, Exercise 6.11)
\[ \begin{array}{rcl}
\sym^{2d}(V_1^+\otimes V_2^+\oplus V_1^-\otimes V_2^-) & = & \dps\bigoplus_{m+n=d}\sym^m(V_1^+\otimes V_2^+)\otimes\sym^n(V_1^-\otimes V_2^-)\\
 & = & \dps\bigoplus_{m+n=d, \; \lambda^+\vdash m, \; \lambda^-\vdash n}\Sc^{\lambda^+}(V_1^+)\otimes\Sc^{\lambda^+}(V_2^+)\\
 & & \otimes\Sc^{\lambda^-}(V_1^-)\otimes\Sc^{\lambda^-}(V_2^-).
\end{array} \]
And the same kind of formula exists for $\sym^{2d}(V_1^+\otimes V_2^-\oplus V_1^-\otimes V_2^+)$. Hence,
\[ \begin{array}{rl}
 & \sym^{2d}(V_1^+\otimes V_2^+\oplus V_1^-\otimes V_2^-)\otimes\sym^{2d}(V_1^+\otimes V_2^-\oplus V_1^-\otimes V_2^+)\\
 & \\
= & \dps\bigoplus_{(\lambda^+,\lambda^-),(\mu^+,\mu^-)\text{ s.t. }|\lambda^\pm|=|\mu^\pm|=2d}\Sc^{\lambda^+}(V_1^+)\otimes\Sc^{\mu^+}(V_1^+)\otimes\Sc^{\lambda^+}(V_2^+)\otimes\Sc^{\mu^-}(V_2^+)\\
 & \otimes\Sc^{\lambda^-}(V_1^-)\otimes\Sc^{\mu^-}(V_1^-)\otimes\Sc^{\lambda^-}(V_2^-)\otimes\Sc^{\mu^+}(V_2^-)\\
 & \\
= & \dps\bigoplus_{\lambda^+,\lambda^-,\mu^+,\mu^-,\nu_1,\nu_2,\nu_3,\nu_4}\big(\Sc^{\nu_1}(V_1^+)\otimes\Sc^{\nu_2}(V_2^+)\otimes\Sc^{\nu_3}(V_1^-)\otimes\Sc^{\nu_4}(V_2^-)\big)^{\oplus m^{\nu_1,\nu_2,\nu_3,\nu_4}_{\lambda^\pm,\mu^\pm}},
\end{array} \]
where this last sum runs over partitions verifying $|\lambda^\pm|=|\mu^\pm|=2d$, and
\[ m^{\nu_1,\nu_2,\nu_3,\nu_4}_{\lambda^\pm,\mu^\pm}=(\mathrm{LR})_{\lambda^+,\mu^+}^{\nu_1}\times(\mathrm{LR})_{\lambda^+,\mu^-}^{\nu_2}\times(\mathrm{LR})_{\lambda^-,\mu^-}^{\nu_3}\times(\mathrm{LR})_{\lambda^-,\mu^+}^{\nu_4}, \]
$(\mathrm{LR})$ denoting the Littlewood-Richardson's coefficients. Henceforth, the multiplicity of $\sym^{2d}(V_1^+)\otimes\sym^{2d}(V_2^+)\otimes\sym^{2d}(V_1^-)\otimes\sym^{2d}(V_2^-)$ is
\[ \sum_{\lambda^+,\lambda^-,\mu^+,\mu^-}(\mathrm{LR})_{\lambda^+,\mu^+}^{(2d)}\times(\mathrm{LR})_{\lambda^+,\mu^-}^{(2d)}\times(\mathrm{LR})_{\lambda^-,\mu^-}^{(2d)}\times(\mathrm{LR})_{\lambda^-,\mu^+}^{(2d)}, \]
where we take the sum over partitions such that $|\lambda^+|+|\lambda^-|=|\mu^+|+|\mu^-|=|\lambda^+|+|\mu^+|=|\lambda^+|+|\mu^-|=|\lambda^-|+|\mu^-|=|\lambda^-|+|\mu^+|=2d$, i.e. $|\lambda^+|=|\lambda^-|=|\mu^+|=|\mu^-|=d$. Then
\[ c_{d\big((2),(2)\big),d\big((2),(2)\big)}^{d\big((2),(2)\big)}=\sum_{\lambda^+,\lambda^-,\mu^+,\mu^-\vdash d}(\mathrm{LR})_{\lambda^+,\mu^+}^{(2d)}\times(\mathrm{LR})_{\lambda^+,\mu^-}^{(2d)}\times(\mathrm{LR})_{\lambda^-,\mu^-}^{(2d)}\times(\mathrm{LR})_{\lambda^-,\mu^+}^{(2d)}. \]
Finally, Littlewood-Richardson's rule shows that $(\mathrm{LR})_{\lambda,\mu}^{(2d)}=0$ unless $\lambda=\mu=(d)$. And in that last case, the coefficient is 1. This concludes the proof of the lemma.
\end{proof}

\begin{prop}
For every triple $\big((\lambda^+,\lambda^-),(\mu^+,\mu^-),(\nu^+,\nu^-)\big)$ of double partitions, the sequence
\[ \left(c_{\lambda^\pm+d\big((2),(2)\big),\mu^\pm+d\big((2),(2)\big)}^{\nu^\pm+d\big((2),(2)\big)}\right)_{d\in\N} \]
stabilises for $d$ large enough.
\end{prop}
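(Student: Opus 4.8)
The plan is to recognise this proposition as nothing more than a direct instance of Theorem \ref{general_result_Bn}, specialised to the triple $\alpha^\pm=\beta^\pm=\gamma^\pm=\big((2),(2)\big)$. That theorem asserts that whenever a triple of double partitions satisfies the weak-stability condition $c_{d\alpha^\pm,d\beta^\pm}^{d\gamma^\pm}=1$ for every $d\in\N^*$, the shifted sequence $\left(c_{\lambda^\pm+d\alpha^\pm,\mu^\pm+d\beta^\pm}^{\nu^\pm+d\gamma^\pm}\right)_{d\in\N}$ is eventually constant, for arbitrary double partitions $\lambda^\pm,\mu^\pm,\nu^\pm$. Hence the entire content of the proposition is the verification that our specific triple meets the hypothesis of that theorem.

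First I would invoke the preceding lemma, which establishes precisely that $c_{d((2),(2)),d((2),(2))}^{d((2),(2))}=1$ for all $d\in\N^*$; this is exactly the weak-stability of the triple $\big((2),(2)\big)$ with itself. Then I would apply Theorem \ref{general_result_Bn} with $\alpha^\pm=\beta^\pm=\gamma^\pm=\big((2),(2)\big)$ and with $\lambda^\pm=(\lambda^+,\lambda^-)$, $\mu^\pm=(\mu^+,\mu^-)$, $\nu^\pm=(\nu^+,\nu^-)$ the arbitrary double partitions of the statement, which immediately yields that the sequence stabilises for $d$ large enough. This is a one-line deduction once the two ingredients are in place.

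Since the genuinely substantive work has already been carried out upstream — the Littlewood--Richardson computation inside the lemma producing the sharp value $1$, together with the GIT-theoretic machinery of Section \ref{general_section} that underlies Theorem \ref{general_result_Bn} via Proposition \ref{link_line_bundles_Bn} — no real obstacle remains at this stage. The only point warranting a word of care is that the hypothesis of Theorem \ref{general_result_Bn} is the sharp equality ``$=1$ for all $d$'' (weak stability) rather than mere nonvanishing ``$>0$'', since it is weak stability that drives the semi-stable-locus argument; the lemma does deliver exactly the value $1$, so this requirement is met and the corollary follows.
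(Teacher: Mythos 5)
Your proposal is correct and coincides with the paper's own proof, which likewise deduces the proposition directly from the preceding lemma (establishing $c_{d((2),(2)),d((2),(2))}^{d((2),(2))}=1$ for all $d\in\N^*$) together with Theorem \ref{general_result_Bn}. Your remark about the hypothesis being the exact equality $=1$ rather than mere positivity is a valid point of care and does not change the argument.
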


\begin{proof}
It is a direct consequence of the previous lemma and of Theorem \ref{general_result_Bn}.
\end{proof}

\subsubsection{An example of an explicit bound}

We can also here compute in some special and not too difficult cases a bound for the stabilisation of the sequence of coefficients. We do this in the case analogous to Murnaghan's stability (Example 1).
As before we set $\Lb=\Lb_{((1),\emptyset),((1),\emptyset),((1),\emptyset)}$ and $\Mb=\Lb_{\lambda^\pm,\mu^\pm,\nu^\pm}$. If we consider the usual projection (cf. Section \ref{steps_computation})
\begin{center}
\begin{tikzpicture}
\node (un) at (0,0) {$X$};
\node (deux) at (2,0) {$\overline{X}$};
\node (trois) at (0,1.5) {$\Lb$};
\node (quatre) at (2,1.5) {$\overline{\Lb}$};
\node (cinq) at (-0.5,0) {$\pi:$};
\draw[->] (un)--(deux);
\draw[->] (trois)--(un);
\draw[->] (quatre)--(deux);
\draw[->, dash pattern=on 1mm off 1mm] (trois)--(quatre);
\end{tikzpicture}
\end{center}
such that $\Lb$ is the pull-back of an ample line bundle $\overline{\Lb}$, we notice that $\overline{X}$ and $\overline{\Lb}$ are exactly the same as in Section \ref{example_murnaghan}. Then we know that it is sufficient to determine when $\pi^{-1}(\overline{x})\subset X^{us}(\Mb+d\Lb)$ (same notation as in \ref{example_murnaghan} for $\overline{x}$). As a consequence, if we consider for instance the one-parameter subgroup
\[ \tau_0=\left.\left.\big(1,-1,0,\dots,0\text{ }\right|\text{ }0,\dots,0\text{ }\right|-1,1,0,\dots,0\text{ }\left|\text{ }0,\dots,0\big)\right. \]
of $G$, we have as before $\mu^{\overline{\Lb}}(\overline{x},\tau_0)=2$. And
\vspace{-5mm}
\begin{changemargin}{-1mm}{-1mm}
\[ \begin{array}{rcl}
\max_{x\in\pi^{-1}(\overline{x})}(-\mu^\Mb(x,\tau_0)) & = & -\lambda^+_1+\lambda^+_2-\mu^+_1+\mu^+_2+2\left(\nu^+_2-\nu^+_{\ell(\lambda^+)\ell(\mu^+)+\ell(\lambda^-)\ell(\mu^-)}\right)\\
 & & +\dps\sum_{k=1}^{\ell(\lambda^+)+\ell(\mu^+)-4}\left(\nu^+_{k+2}-\nu^+_{\ell(\lambda^+)\ell(\mu^+)+\ell(\lambda^-)\ell(\mu^-)-k}\right)\\
 & & +\dps\sum_{k=1}^{\ell(\lambda^-)+\ell(\mu^-)}\left(\nu^-_k-\nu^-_{\ell(\lambda^+)\ell(\mu^-)+\ell(\lambda^-)\ell(\mu^+)-k+1}\right).
\end{array} \]
\end{changemargin}

\begin{theo}
Let $(\lambda^+,\lambda^-)$, $(\mu^+,\mu^-)$, and $(\nu^+,\nu^-)$ be double partitions of the same integer. We set $m=\ell(\lambda^+)\ell(\mu^+)+\ell(\lambda^-)\ell(\mu^-)$, $n=\ell(\lambda^+)\ell(\mu^-)+\ell(\lambda^-)\ell(\mu^+)$, and
\vspace{-5mm}
\begin{changemargin}{-1.2cm}{-1.2cm}
\[ \mbox{\small $D=\left\lceil\dps\frac{1}{2}\left(-\lambda^+_1+\lambda^+_2-\mu^+_1+\mu^+_2+2\left(\nu^+_2-\nu^+_m\right)+ \dps\sum_{k=1}^{\ell(\lambda^+)+\ell(\mu^+)-4}\left(\nu^+_{k+2}-\nu^+_{m-k}\right)+\dps\sum_{k=1}^{\ell(\lambda^-)+\ell(\mu^-)}\left(\nu^-_k-\nu^-_{n-k+1}\right)\right)\right\rceil$.} \]
\end{changemargin}
Then, for all $d\geq D$ ($d\in\N$),
\[ c_{(\lambda^++(d),\lambda^-),(\mu^++(d),\mu^-)}^{(\nu^++(d),\nu^-)}=c_{(\lambda^++(D),\lambda^-),(\mu^++(D),\mu^-)}^{(\nu^++(D),\nu^-)}. \]
\end{theo}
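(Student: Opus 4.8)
The plan is to transpose the entire argument of Section \ref{example_murnaghan} to the present situation, applied to $\Lb=\Lb_{((1),\emptyset),((1),\emptyset),((1),\emptyset)}$ and $\Mb=\Lb_{\lambda^\pm,\mu^\pm,\nu^\pm}$ over the six-fold product of flag varieties $X$, with group $G=\gl(V_1^+)\times\gl(V_1^-)\times\gl(V_2^+)\times\gl(V_2^-)$. First I would introduce the projection $\pi:X\to\overline{X}$ onto the partial flags for which $\Lb$ is the pull-back of an ample $\overline{\Lb}$. As observed just above, because the $\emptyset$-components contribute only trivial factors, $\Lb$ sees solely the first flag step of $V_1^+$, $V_2^+$ and $V^+$, so the pair $(\overline{X},\overline{\Lb})$ is literally the one of Section \ref{example_murnaghan}. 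Consequently Proposition \ref{orbits_murnaghan} transfers verbatim: $\overline{X}^{us}(\overline{\Lb})$ is the closure of the single orbit $G.\overline{x}$ with $\overline{x}=(\C e_1,\C f_2,\C\varphi_n)$. Since $\pi$ is a $G$-equivariant fibration and $X^{us}(\Mb+d\Lb)$ is closed, the exact analogue of Lemma \ref{x_bar_suffices} reduces the theorem to producing a threshold beyond which $\pi^{-1}(\overline{x})\subset X^{us}(\Mb+d\Lb)$.

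Then I would apply the Hilbert-Mumford criterion through the destabilising one-parameter subgroup $\tau_0=(1,-1,0,\dots,0\mid 0,\dots,0\mid -1,1,0,\dots,0\mid 0,\dots,0)$, for which $\mu^{\overline{\Lb}}(\overline{x},\tau_0)=2$. Using additivity, $\mu^{\Mb+d\Lb}(x,\tau_0)=\mu^\Mb(x,\tau_0)+2d$, and restricting, as in Section \ref{computation_murnaghan}, to the fixed points $x\in\pi^{-1}(\overline{x})^{\tau_0}$, the inclusion holds as soon as $d>d_0:=\tfrac12\max_x(-\mu^\Mb(x,\tau_0))$. This maximum is precisely the quantity displayed just above the statement, so for every integer $d>d_0$ one gets $X^{us}(\Lb)\subset X^{us}(\Mb+d\Lb)$; by the same Luna-slice reduction (Proposition \ref{prop_teleman} together with the analogue of Proposition \ref{inclusion_of_ss_points_suffices}) that underpins Theorem \ref{general_result_Bn}, the coefficient then becomes independent of $d$.

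The heart of the matter, and the step I expect to be the main obstacle, is justifying the displayed value of $\max_x(-\mu^\Mb(x,\tau_0))$. Following the lemma of Section \ref{computation_murnaghan}, I would decompose each of the six vector spaces into its $\tau_0$-weight spaces, read off the admissible positions of the basis vectors in a $\tau_0$-fixed flag, and evaluate the induced character on the fibre of $\Mb$ via the Plücker description of the $\Lb_\delta$. On $\fl(V_1^+)$ and $\fl(V_2^+)$ the computation is identical to Murnaghan's and yields the $\lambda^+$- and $\mu^+$-terms, while $\fl(V_1^-)$ and $\fl(V_2^-)$ contribute nothing since $\tau_0$ acts trivially there. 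On $\fl(V^+)$, only the summand $V_1^+\otimes V_2^+$ carries the weights $\pm2,\pm1$, exactly as in the Kronecker case, the summand $V_1^-\otimes V_2^-$ contributing weight $0$ and merely enlarging the ambient dimension to $m$; this produces the $\nu^+$-terms. The genuinely new contribution comes from $\fl(V^-)$ with $V^-=V_1^+\otimes V_2^-\oplus V_1^-\otimes V_2^+$: here $\tau_0$ has weight $+1$ on $e_1\otimes V_2^-$ and $V_1^-\otimes f_2$, and weight $-1$ on $e_2\otimes V_2^-$ and $V_1^-\otimes f_1$, each of multiplicity $\ell(\lambda^-)+\ell(\mu^-)$ in the minimal-dimension normalisation. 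Placing these weight-$\pm1$ vectors in the most destabilising slots of the flag of $V^-$ yields exactly the extra summand $\sum_{k=1}^{\ell(\lambda^-)+\ell(\mu^-)}(\nu^-_k-\nu^-_{n-k+1})$.

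Finally, to pass from the strict bound $d>d_0$ to $d\geq D=\lceil d_0\rceil$, I would reuse the quasipolynomiality machinery of Section \ref{improvement}. As in the remark following Proposition \ref{strict_bound_murnaghan}, the inclusion $X^{ss}(\Mb+d\Lb)\subset X^{ss}(\Lb)$ in fact persists for all rational $d>d_0$; combined with Lemma \ref{lemma_lattice} and Theorem \ref{theo_quasipolynomial}, this forces the dimension $\dim\h^0(X,\Mb+d_0\Lb)^G$ to equal its limiting value whenever $d_0$ is itself an integer, which is the case captured by the ceiling $D$. This delivers the asserted equality $c_{(\lambda^++(d),\lambda^-),(\mu^++(d),\mu^-)}^{(\nu^++(d),\nu^-)}=c_{(\lambda^++(D),\lambda^-),(\mu^++(D),\mu^-)}^{(\nu^++(D),\nu^-)}$ for all $d\geq D$.
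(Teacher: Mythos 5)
Your proposal is correct and follows essentially the same route as the paper: reduction via $\pi$ to the pair $(\overline{X},\overline{\Lb})$ of Section \ref{example_murnaghan}, destabilisation of $\pi^{-1}(\overline{x})$ by the same one-parameter subgroup $\tau_0$ with $\mu^{\overline{\Lb}}(\overline{x},\tau_0)=2$, the weight-space computation of $\max_x(-\mu^{\Mb}(x,\tau_0))$ (including the new $\nu^-$-contribution from $V^-=V_1^+\otimes V_2^-\oplus V_1^-\otimes V_2^+$), and the quasipolynomiality improvement of Section \ref{improvement} to replace the strict bound $d>d_0$ by $d\geq D=\lceil d_0\rceil$. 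Indeed your write-up is somewhat more explicit than the paper's, which presents the weight computation and then states the theorem without further comment.
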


\bibliographystyle{plain}
\bibliography{biblio_stab_kron_coefs}
\end{document}